\documentclass[12pt]{amsart}
\usepackage{amssymb}
\usepackage{amsthm}
\usepackage{amsmath}
\usepackage{amsfonts}
\usepackage{stackrel}
\usepackage{appendix}
\usepackage{enumitem}
\setlist{topsep=.3cm, itemsep=.2cm}
\usepackage{mathrsfs}
\usepackage{color}
\usepackage[dvipsnames]{xcolor}
\usepackage{appendix}

\usepackage[T1]{fontenc}
\newtheorem{thm}{Theorem}[section]
\newtheorem{defi}[thm]{Definition}
\newtheorem{pro}[thm]{Proposition}
\newtheorem{cor}[thm]{Corollary}
\newtheorem{lem}[thm]{Lemma}
\newtheorem{rem}[thm]{Remark}

\numberwithin{equation}{section}
\usepackage{hyperref}
\hypersetup{colorlinks=true,linkcolor=blue,filecolor=blue, citecolor=blue,urlcolor=cyan}
\usepackage{comment}

\newcommand{\R}{\mathbb{R}}
\newcommand{\N}{\mathbb{N}}

\newcommand{\D}{\displaystyle}

\def\qq#1{\qquad \mbox{#1}\quad}
\def\qt#1{\quad \mbox{#1}\ }
\def\q#1{\quad \mbox{#1}\ }
\newcommand{\al}{\alpha}
\newcommand{\be}{\beta}
\newcommand{\De}{\Delta}
\newcommand{\de}{\delta}

\newcommand{\e}{\varepsilon}
\newcommand{\eps}{\e}

\newcommand{\Ga}{\Gamma}
\newcommand{\la}{\lambda}
\newcommand{\La}{\Lambda}

\newcommand{\na}{\nabla}
\newcommand{\Om}{\Omega}
\newcommand{\om}{\omega}
\newcommand{\Omb}{\overline{\Om}}
\newcommand{\p}{\partial}
\newcommand{\s}{\sigma}

\newcommand{\vf}{\varphi}

\newcommand{\wto}{\rightharpoonup}

\title[A $p$-Laplacian Problem with Slightly Subcritical  Nonlinearity]
{A $p$-Laplacian Problem with Slightly Subcritical Regularly Varying Nonlinearity}

\author{Mabel Cuesta}
\address[M.~Cuesta]{Department of Mathematics, Universit\'e du Littoral C\^ote d'Opale (ULCO), Laboratoire de Math\'ematiques Pures et Appliqu\'ees Joseph Liouville (LMPA),
	62100 Calais, France.
Orcid ID : 0000-0002-8576-3941}
\author{Rosa Pardo}
\address[R.~Pardo]{Departamento de An\'alisis Matem\'atico y Matem\'atica Aplicada,  Universidad Complutense de Madrid, 28040--Madrid, Spain.
Orcid ID : 0000-0003-1914-9203}
\thanks{The second author is supported by grants  PID2022-137074NB-I00,  MICINN,  Spain, and by UCM, Spain,  Grupo 920894.}

\date{}

\begin{document}
\maketitle

{\it Positive solutions,  subcritical nonlinearity, changing sign weight,  $p$-Laplacian, regularly varying functions}

{\it 2020 Mathematics Subject Classification.
35J92, 
47J30, 
35B38, 
58E07, 
35B09 
}
\begin{abstract}
We study a quasilinear elliptic problem involving the $p$-laplacian operator  and a  slightly subcritical nonlinearity with a sign-changing weight. 
We assume  that the slightly subcritical nonlinearity   is a  regularly varying function at zero and at infinity,   which are not necessarily
asymptotic to  a power at infinity.
We state sufficient conditions which guarantee a Palais-Smale condition.
We also provide  a bifurcation theorem for these nonlinearities,  which allow us to state  the existence of a bifurcated branch of  positive solutions, containing a turning point, and   multiplicity of solutions. 
\end{abstract}
\maketitle
\section{Introduction}
We are interested in the existence of positive solutions to a Dirichlet problem for a class of quasilinear elliptic equations whose nonlinear term is  slightly subcritical and  regularly varying at infinity, see \ref{f1}  and \ref{f3}, and  involves an indefinite weight. 
More precisely,  we consider the quasilinear problem 
\begin{equation}
\label{eq:ell:pb} 
-\De_p u
=\la u^{p-1}+m(x)f(u)  \quad \mbox{in } \Om, \qquad u= 0,  \quad \mbox{on } \partial \Om,
\end{equation}
where   $ 1<p<N$,  $\Om \subset \R ^N $    is a bounded, connected open subset, with $C^{1,\alpha}$ boundary $\partial \Om$, $\De_p(u)=\text{div }(|\na u|^{p-2}\na u)$ is the $p$-Laplacian operator, $\la\in\R$ is a real parameter,   $m\in C(\overline{\Om})$  changes sign in $\Om$, 
and $f \in C([0,+\infty))$, $f(0)=0$. Solutions of \eqref{eq:ell:pb} will be understood in the weak sense.

\medskip

When $f(s)$ is $p$-superlinear and  grows at infinity as  $s^{q-1}$ with $p-1<q<p* :=\frac{Np}{N-p}$ the   critical Sobolev exponent, this problem is widely studied. In particular, the case $p=2$ and $a$ changing sign was first studied by \cite{Alama-Tarantello} for a power-like nonlinearity  via local bifurcation, monotonicity  and variational methods. Next by  \cite{BCN} via degree theory, see also   \cite{Cuesta_Pardo_MJM_2022}, and references therein. For the case  $p\not=2$, see \cite{Bo-Ta, Ilyasov, Kaufman_Ramos-Quoirin_Umezu} for power type nonlinearities $f$ among  others. 
\medskip

We are interested in the solvability of problem \eqref{eq:ell:pb} in the case when  $f$ does not grow as a subcritical power. For instance,   when $p=2$ and $f(s):=O\left(\dfrac{|s|^{2^* -2}s}{[\ln(e+|s|)]^\alpha}\right)$, this problem has been studied in \cite{Cuesta_Pardo_MJM_2022}, see also  \cite{Castro_Pardo_RMC_2015, Castro_Pardo_DCDS_2017, Clapp_Pardo_Pistoia_Saldana} for this nonlinearity, and \cite{Cuesta_Pardo_Pistoia_MJM_2024} for a related system. Our aim is to explore this results for $p\not=2$  for  nonlinearities $f$   $p$-superlinear and slightly subcritical.

\medskip

Prototype nonlinearities are the following ones
\begin{equation}\label{protot:f}
f(s):=\dfrac{|s|^{p^* -2}s}{\big[\ln(e+|s|)\big]^\alpha},\
f(s):=\dfrac{|s|^{p^* -2}s}{\Big[\ln\big(e+\ln(1+|s|)\big)\Big]^\alpha}, \ 
s>0, 
\end{equation}
where $\al>0$ is a fixed exponent, or even having a smaller perturbation  at infinity.  

\medskip
Instead of focusing on the prototypes, we shall consider a wider class of nonlinearities.
Precisely,  we will assume   the following hypothesis :

\smallskip

\begin{enumerate}[label=\textbf{(f\arabic*)$_\infty$}]
\item
\label{f1}
$f$ is slightly subcritical at infinity : ${\D\lim_{s\to +\infty}}\, \frac{f(s)}{|s|^{p^*-1}}=0$;

\item 
\label{f2}
$f>0$  at infinity and for all $ K\subset \R $ compact, there exists a constant  $c_1\geq 0$, 
\begin{equation}
\label{c1}
 f(s)+c_1s^{p-1}  \hbox{ is increasing for } s\in K,\ s>0;
\end{equation}

\item
\label{f3}
there exist  constants $c_2>0$ and $ q\in (p-1,p^*-1]$ 
such that for all $\tau>0$, 
\begin{equation}\label{bf}
\frac{f(\tau s)}{f(s)}\le c_2(1+\tau^{p^*-1})\ \text{ for all } s\text{ big enough,}  
\end{equation}
and
\begin{equation}\label{reg:var}
{\D\lim_{s\to +\infty}}\, \frac{f(\tau s)}{f(s)}= \tau^q, \qquad \forall \tau >0.
\end{equation}
\end{enumerate}

Positive measurable functions $f$ satisfying \eqref{reg:var} for some $q\in\R$ are known as {\it regularly varying functions  (at infinity)}, and are denoted as   $f\in RV_q,$ see  the seminal work of Karamata \cite{Karamata}, see also the books \cite{Bingham, Resnick, Seneta}. Those functions do not need to be asymptotic to a power, see for instance \eqref{protot:f} or \eqref{crit:exp}.\smallskip

We will use a variational approach needing the Palais-Smale condition of the associated  energy functional. This is a delicate issue for slightly subcritical nonlinearities, where in general Rellich–Kondrachov Theorem on compact Sobolev embeddings into Lebesgue spaces do not apply (see  \cite[Theorem 9.16]{Brezis_2011}).\smallskip

To guarantee some kind of compactness, we resort  a careful analysis of the behaviour of $f$ at infinity that relies on Lebesgue-Vitali's convergence Theorem (see \cite[Proposition  \ref{pro:PS} and Theorem \ref{th:comp:PS}]{Bogachev}).

Let us stress  that, if moreover, $f$ is increasing at infinity, techniques  coming from Orlicz spaces can be used, see \cite[Proposition 3.1 and Theorem A.2]{Cuesta_Pardo_MN_2024}.  Here, instead,  we only ask a mild monotonicity on compact sets, c.f. \ref{f2}, a hypothesis used  mainly to  apply some comparison results, see Proposition \ref{pro:sol:min}  and Appendix A.

\medskip

In order to prove the existence of positive solutions, we will use local and global bifurcation and  variational techniques.   Using Orlicz spaces, we extended in \cite[Theorem 3.2]{Cuesta_Pardo_MN_2024}  the well  know bifurcation results of  Drabek for the $p$-Laplacian problems,  and provide a bifurcation theorem  generalizing it to the case of slightly subcritical nonlinearities, and allowing a changing sign weight in the $p$-linear term.   In  Theorem \ref{th:ex2} 
we state  a similar global bifurcation result  and we give some hits to adapt the proof of \cite[Theorem 3.2]{Cuesta_Pardo_MN_2024}  using Lebesgue-Vitali convergence theorem. We refer to Appendix C for further details.

\medskip

Bifurcation from zero methods requires  some hypothesis on the behaviour of $f$ close to 0. We will assume  in this work  the following conditions:

\smallskip

\newcounter{num}
\setcounter{num}{4}
\begin{enumerate}[label=\textbf{(f\thenum)$_0$}\addtocounter{num}{1}]
\item
\label{H5}
$f(0)=0,$ there exists $\de_0>0$ such that $f(s)>0$ for $s\in(0,\de_0)$, and $f$ is $p$-sublinear at zero : $\ \displaystyle{\lim_{s\to 0^+}\frac{f(s)}{s^{p-1}}=0}$;

\item

\label{H6}
there exists a constant   $\eta>0$ such that 
\begin{equation}\label{r}
\lim_{s\to 0^+}\, \dfrac{f(\tau s)}{f(s)}=
\tau^\eta\,\quad  \qq{for all} \tau\geq 0
\end{equation}
and  that 
\begin{equation}
\label{a:vf1:<0}
I_0:=\int_{\Om} m(x)\vf_1^{\eta+1}\, dx <0
\end{equation}
where $\varphi_1$ is the unique  positive eigenfunction for $-\Delta_p$ in $W_0^{1,p}(\Om)$ normalised in the $L^\infty(\Om)$-norm.
\end{enumerate}

\bigskip 

As before, positive measurable functions $f$ satisfying \eqref{r} for some $\eta\in\R$ are known as {\it regularly varying functions  (at 0)}.  
\bigskip

Concerning the weight $m$, we assume the following hypothesis. All throughout the paper, for $v:\Om\to\R$, $v=v^+-v^-$ where
\[
v^+(x):=\max\{v(x),0\} \qq{and}  v^-(x):=\max\{-v(x),0\}.
\]
Let us  define

\begin{equation*}\begin{array}{ll}
\Om^\pm =:\{x\in \Om : \ \pm m(x)>0\},\quad      
&  \Om^0 =:\{x\in \Om : \ m(x)  = 0\},\\[.3cm]
 \om^{+,0} =:{\rm int}\,\big(\Om^+\cup \Om^0\big),\quad     
 & \om^{0}=:{\rm int}\,\big(\Om^0\big),
\end{array}
\end{equation*} 
and assume that  
\begin{enumerate}[label=\textbf{(m\arabic*)}]
\item 
\label{m0}
$\sup m^+>0$ and $\sup m^->0$, so both $\Om^\pm \ne\emptyset $;
\item
\label{m2}  the {\it thick zero set condition} is satisfied: $\overline{\Om^+}\, \cap\, \overline{\Om^-}=\emptyset$, moreover 
we assume that  $\partial\om^0$ is of class $C^1$. 
\end{enumerate}

Observe that \ref{m2} implies that $\om^0\not=\emptyset $. The thick zero set condition was first stated  by Alama and Tarantello in \cite{Alama-Tarantello}. It is an usual condition on superlinear problems with changing sign weights,  although  other conditions to replace the thick zero set condition have been explored for instance by  \cite{Ramos_Terracini_Troestler} and more recently by 
for a slightly superlinear  problem.

\bigskip

Let us denote by $\la_1(\om)$ the least  eigenvalue of the Dirichlet eigenvalue problem 
\begin{equation*}
-\De_p \vf=\la\vf^{p-1} \qt{in}\om,\qquad \vf= 0\qt{on}\p \om, 
\end{equation*}
where $\om\subset\R^N$ is an open bounded set.
We denote by $\vf_1(\om)$   the eigenfunction associated  to $\la_1(\om)$  with $\|\varphi_1(\omega)\|_\infty=1$ and $\vf_1(\om)\geq 0$, $\vf_1 (\om)\not=0$.
It is known \cite{Anane} that if    $\om$ is connected,  the eigenvalue  $\la_1 (\om)$ is simple, isolated  and it is the unique eigenvalue having a  definite-sign eigenfunction; furthermore $\vf_1(\om)>0$ in $\om$.
We  write $(\la_1,\vf_1)$ when $\om=\Om$. 

\medskip
Denote
\begin{equation}\label{C0}
C_0=\inf\{C\geq 0:\, f(s)+Cs^{p-1} > 0 \hbox{ for all } s> 0\},
\end{equation}
and remark that  \ref{f2} and \ref{H5}  implies that  $C_0<+\infty$.

\medskip 

Our main result is the following theorem. 

\begin{thm}\label{th:ex}
Let $1<p<N$ and  assume that $f$  satisfies hypothesis {\rm \ref{f1}}--{\rm \ref{f3}} and {\rm \ref{H5}}--{\rm \ref{H6}}.	
If $m\in C(\Omb)$  satisfies hypothesis {\rm \ref{m0}}--{\rm \ref{m2}}, 
and the sign condition \eqref{a:vf1:<0} holds, then there exists  $\La\in\R$ satisfying 
\begin{equation}
\label{La}
\la_1<\La\le\min\Big\{\la_1\big(\om^{0}\big)\, ,\
\la_1\big(\om^{+,0}\big) +C_0\sup m^+\Big\},
\end{equation}
and such that   \eqref{eq:ell:pb} has a  positive solution for any  $\la < \La$, and there is not non-negative nontrivial solution to \eqref{eq:ell:pb} if  $\la > \La$. 
Besides, there exists an unbounded,  closed and connected set $ \mathscr{C}^+$ of  positive  solutions to \eqref{eq:ell:pb} bifurcating from the trivial solution set at the bifurcation point $(\la,u)=(\la_1,0)$.

Moreover,
\begin{itemize}
\item[\rm (a)] For every, 
$\la\in\big(\la_1, \La)$,  \eqref{eq:ell:pb} admits at least two  ordered positive solutions, one of those is a local minimum of the energy functional.

\item[\rm (b)] If $\La< \la_1\big(\om^{0}\big)$ then, problem  \eqref{eq:ell:pb} admits at least one   positive solution  for  $\la=\La$.

\item[\rm (c)] For every $\la\le\la_1$, problem  \eqref{eq:ell:pb} admits at least  one   positive solution.
\end{itemize}
\end{thm}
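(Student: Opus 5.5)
The plan is to combine a global bifurcation argument with a priori bounds and a variational/sub-supersolution scheme, in the spirit of Alama--Tarantello and of \cite{Cuesta_Pardo_MN_2024}.

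\emph{Step 1: the continuum.} We first invoke the global bifurcation result (Theorem \ref{th:ex2}). Hypotheses \ref{H5}--\ref{H6} make $m(x)f(u)$ a $p$-sublinear perturbation at zero, so $\la_1$ is a bifurcation point. Positivity of the solutions near $(\la_1,0)$ follows from the strong maximum principle together with the comparison results of Appendix A; these use \ref{f2} and the constant $C_0$, since $-\De_p u+(C_0\|m\|_\infty)u^{p-1}\ge \la u^{p-1}$ on the support. We thus obtain an unbounded closed connected set $\mathscr C^+$ of positive solutions emanating from $(\la_1,0)$.

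\emph{Step 2: the direction of bifurcation.} Along $\mathscr C^+$ near $(\la_1,0)$ we write $u=t(\vf_1+w)$ and test the equation with $\vf_1$. Using \eqref{r}, this gives
\[
(\la-\la_1)\int_\Om\vf_1^p\sim -\frac{f(t)}{t^{p-1}}\,I_0>0.
\]
By \eqref{a:vf1:<0}, the bifurcation is therefore to the right.

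\emph{Step 3: bounds on $\la$ and definition of $\La$.} We set $\La:=\sup\{\la:\eqref{eq:ell:pb}\text{ has a nonnegative nontrivial solution}\}$. The bound $\La\le\la_1(\om^0)$ comes from a Picone-type identity with test function $\vf_1(\om^0)$: on $\om^0$ we have $m=0$, so $-\De_p u=\la u^{p-1}$ there, and $u>0$ forces $\la<\la_1(\om^0)$ (or $\le$). The bound $\La\le\la_1(\om^{+,0})+C_0\sup m^+$ is obtained in the same way on $\om^{+,0}$, using $mf(u)\ge -C_0 m^+u^{p-1}$ there, since $m\ge0$ on that set. The strict inequality $\La>\la_1$ follows from Step 2.

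\emph{Step 4: existence below $\La$ and multiplicity.} For $\la<\La$ we pick a solution $\bar u$ at some level $\bar\la\in(\la,\La]$; it is a supersolution at level $\la$. For $\la>\la_1$, a small multiple $\e\vf_1$ would fail to be a subsolution because of the sign of $I_0$. Instead we minimize the energy on the order interval $[0,\bar u]$, via truncation and \ref{f2}, obtaining a local minimizer $u_\la$. The minimizer is nontrivial for $\la>\la_1$ because it is a $W^{1,p}$-local minimum and $0$ is not a local minimum there. For $\la\le\la_1$ we use the continuum instead: the a priori bounds below show that $\mathscr C^+$, being unbounded, must cover all of $(-\infty,\la_1]$ in the $\la$-direction. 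This gives (c) and also existence on $(\la_1,\La)$. For the second solution in (a), the energy has mountain-pass geometry around the local minimizer $u_\la$, because of the growth \eqref{bf} at infinity in $\Om^+$. Palais--Smale sequences are bounded and converge by Proposition \ref{pro:PS} and Theorem \ref{th:comp:PS}, which rely on Vitali's theorem. This yields a second positive solution, ordered with respect to $u_\la$ after working in the cone above $u_\la$. Part (b) follows by taking $\la_n\uparrow\La$ and using the a priori bounds, which remain uniform in $\la\le\La<\la_1(\om^0)$; passing to the limit gives a solution, which is nontrivial because the only bifurcation from zero occurs at $\la_1<\La$.

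\emph{Main obstacle.} The key difficulty is the a priori $L^\infty$ bound for positive solutions, uniform for $\la$ in compact subsets of $(-\infty,\la_1(\om^0))$. For merely regularly varying slightly subcritical $f$, the usual Gidas--Spruck blow-up argument is delicate. I would rescale around a maximum point $x_n$ with scale $\mu_n$ chosen so that $\mu_n^{p}f(M_n)=M_n^{p-1}$, and use \eqref{reg:var} with \eqref{bf} to pass to the limit
\[
-\De_p v=m(x_0)v^{q}\quad\text{in }\R^N\text{ or a half-space}.
\]
For $q<p^*-1$ the limit contradicts Liouville theorems. The borderline $q=p^*-1$ needs the extra decay \ref{f1} together with a Pohozaev-type argument, or else the approach of \cite{Cuesta_Pardo_MJM_2022}. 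If the blow-up point lies in $\Om^-\cup\om^0$, the limit equation is linear or has a negative sign, and that case is excluded separately via the thick zero set condition \ref{m2}.
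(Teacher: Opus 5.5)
\textbf{Main gap: the a priori bound.} Your treatment of the bounds on $\La$ and of part (a) matches the paper: sub/super-solutions, a local minimizer, and a mountain pass around $u_\la$ via Proposition \ref{pro:PS} and Theorem \ref{th:comp:PS}. Parts (b) and (c), however, rest entirely on the uniform a priori $L^\infty$ bound you call the main obstacle. That bound is not proved, and the blow-up you sketch cannot deliver it under these hypotheses.

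\emph{Critical index.} Hypothesis \ref{f3} allows $q=p^*-1$, which is exactly the case of the prototypes \eqref{protot:f}. The limit problem $-\De_p v=m(x_0)v^{p^*-1}$ in $\R^N$ with $m(x_0)>0$ then has bounded positive solutions (Aubin--Talenti bubbles), so there is no Liouville contradiction. A Pohozaev-type substitute would need geometric assumptions on $\Om$ and structural information on $m$ that are not available here.

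\emph{Zeros of $m$.} Blow-up at a point with $m(x_0)=0$ gives $-\De_p v=0$, $0\le v\le 1$, $v(0)=1$, hence $v\equiv 1$, again with no contradiction. Such points include $\p\Om^+$, which \ref{m2} does not exclude, and points of $\om^0$. The classical way around this requires a prescribed vanishing rate of $m$ near $\p\Om^+$, which is not assumed.

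\emph{The continuum argument.} Even granting bounds on compact subsets of $(-\infty,\la_1(\om^0))$, your argument for (c) fails when $\La=\la_1(\om^0)$, which \eqref{La} permits. In that case $\mathscr C^+$ can be unbounded by blowing up as $\la\uparrow\la_1(\om^0)$ without ever entering $\{\la<\la_1\}$.

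\textbf{How the paper avoids this.} The paper needs no $L^\infty$ bounds. It uses $W^{1,p}$-boundedness of Palais--Smale sequences (Proposition \ref{pro:PS}, via $v_n=u_n/\|u_n\|$, regular variation, the thick zero set and $\la<\la_1(\om^0)$) together with the Vitali-based compactness of Theorem \ref{th:comp:PS}. You already use both tools for (a).

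For (c), apply the mountain pass theorem directly. For $\la<\la_1$, $J_\la\ge D_1>0$ on a small sphere by Poincar\'e's inequality, \ref{f1} and \ref{H5}. For $\la=\la_1$, $0$ is still a local minimum of $J_{\la_1}$ thanks to $I_0<0$ and Lemma \ref{g0}. De Figueiredo's alternative on local minima then gives either a positive mountain-pass level or nontrivial local minima near $0$.

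For (b), take $\la_n\uparrow\La$ and the local minimizers $u_{\la_n}$. They are critical points with $J_{\la_n}[u_{\la_n}]<0$, hence (PS)$_{\la_n}$ sequences, so they converge strongly when $\La<\la_1(\om^0)$. Your eigenvalue argument then shows the limit is nontrivial.

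\textbf{Two smaller corrections.} First, for $\la>\la_1$, $\de\vf_1$ \emph{is} a subsolution for small $\de$. Indeed \ref{H5} gives $|m f(\de\vf_1)|\le\frac{\la-\la_1}{2}(\de\vf_1)^{p-1}$, and $I_0$ plays no pointwise role. The same estimate yields $J_\la[\de\vf_1]<0$, which is the actual reason your minimizer on $[0,\bar u]$ is nontrivial.

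Second, in Step 2, testing with $\vf_1$ does not produce $\la_1$ when $p\neq 2$. Test with $u$ and use the variational characterization of $\la_1$ instead.
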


\begin{rem}\label{reguC1}\rm
	(i) We recall that  $(\la,u)\in \R\times W^{1,p}_0(\Om)$, with $u\geq 0 $,
	is a   \textit{ weak solution} to \eqref{eq:ell:pb} if  for each  $\psi\in W_0^{1,p}(\Om)$,
	\[
	\int_{\Om}|\na u|^{p-2}\na u\cdot \na \psi\,\, dx=  \int_{\Om}\Big(\la u^{\, p-1}+m(x)f(u)\Big) \psi \, dx .
	\]
	
	(ii) By  the  standard regularity  results for quasilinear problem, see \cite{DiBenedetto, Lieberman_bdry_reg},   there exists $\mu\in(0,1)$ such that weak solutions of \eqref{eq:ell:pb} belong  to $ C^{1,\mu} (\overline{\Om})$. 
	From the strong maximum principles, see \cite{Pucci_Serrin_2007, Pucci_Serrin_Zou, Vazquez_MP}, any positive solution $u$ satisfies $u>0$ in $\Omega$. In fact $u\in {\mathscr P}^+$, where
	\begin{equation}\label{def:P+}
	{\mathscr P}^+:=\left\{u\in C_0^1(\overline{\Om}): u>0 \hbox{ in } \Om \hbox{ and } \frac{\partial u}{\partial \nu}<0 \hbox{ on } \partial \Om \right\}.    
	\end{equation}

	(iii) Observe that whenever $f>0$ for $s>0$, $C_0=0$,  so $\La< \la_1\big(\om^{0}\big)$  when $\om^{+,0}$ is connected, since $\la_1\big(\om^{+,0}\big)<\la_1\big(\om^{0}\big)$.\\
\end{rem}

The main novelties of this paper are the following:
\begin{enumerate}
\item[(i)] 
In Theorem \ref{th:ex} we generalize  results of Alama and Tarantello \cite{Alama-Tarantello} on multiplicity of solutions,
for $p \ne 2$ and regularly varying functions. To be able to do that, we  adapt variational methods and bifurcation techniques  for $p$-laplacian problems with slightly subcritical nonlinearities  of non power-like nonlinearities.

\item[(ii)]  Rellich–Kondrachov Theorem \cite[Theorem 9.16]{Brezis_2011} on compact embedding of Sobolev spaces into Lebesgue spaces,  can not be applied   in presence of slightly subcritical nonlinearities.  Instead, compactness is achieved,  via   the Lebesgue-Vitali's convergence Theorem,
see  Proposition \ref{pro:compG}. 

\item[(iii)]  We develop variational methods. 
In Proposition \ref{pro:PS} and Theorem \ref{th:comp:PS}, we provide  sufficient conditions guaranteeing that  \textit{ Palais-Smale  sequences} are bounded and posses a strongly convergent subsequence. 
Thus, we obtain positive solutions  by  means of a Mountain--Pass Theorem, see Theorem \ref{th:la<la1}. 

\item[(iv)]  In Theorem \ref{pro:bif drabek}, we broaden previously known bifurcation results  for a $p$-laplacian problem with slightly subcritical nonlinearities, see \cite{Cuesta_Pardo_MN_2024}. In Theorem \ref{th:ex2}, the monotonicity conditions that ensure  the positivity of the solutions are weakened,   complementing the results in \cite{Drabek, DelPino-Manasevich, Girg_Takac, Cuesta_Pardo_MN_2024}.
\end{enumerate}

\medskip

This paper is organised in the following way.  Section \ref{section: rvf} summarise some results on regularly varying functions that will be used later. Section 3 adapt a bifurcation result and   a non-existence result, coming from \cite{Cuesta_Pardo_MN_2024}.
Section \ref{sec:exist} deals with the (PS)-condition, and the existence of solutions  for $\la\le\la_1$, based in the Mountain--Pass Theorem. 
The proof of Theorem \ref{th:ex} is completed in Section \ref{sec:proof}.

\medskip 

Hereafter we denote by $\|\cdot\|$ the usual norm in $W_0^{1,p} (\Om)$:
$$
\|u\|=\left(\int_\Om |\na u|^p\, dx\right)^{1/p}.
$$

\section{Some properties of regularly varying functions}\label{section: rvf}
Positive measurable functions $f$ satisfying \eqref{reg:var} for all $\tau>0$ are known as 
{\it regularly varying functions (at infinity) of index $q\in\R$}, see  the classical reference \cite{Karamata}, and \cite{Bingham, Resnick, Seneta} for the general theory. For  applications to the laplacian operator ($p=2$), see \cite{G-Melian_Iturriaga_RQuoirin}  with nonlinearities such as $f(s)=s\big(\log(1+s)\big)^\al$ with  $\al>0$, and \cite{Costa_Quoirin_Tehrani} for  semipositone elliptic problems with nonlinearities such as $f(s)=s^q\big(\log(1+s)\big)^\al$ for $q\in(1,2^*-1)$ and $\al>0$. \\

Since Seneta prove that the limit \eqref{reg:var}  is uniform for  $\tau$  on compact intervals of $(0,+\infty)$, see the uniform convergence theorem  \cite[Theorem 1.1]{Seneta}, then \eqref{reg:var} implies \eqref{bf} for  $\tau$  on compact intervals of $(0,+\infty)$. Indeed, let $\tau\in[a,b]\subset (0,+\infty)$, the  above uniform convergence theorem implies that for all $\tau\in[a,b]$ there exists $s_0>0$ such that
\begin{equation*}
\frac{f(\tau s)}{f(s)}\le 1+ \tau^q, \quad \forall s >s_0,\quad \forall \tau\in[a,b].
\end{equation*}
Condition \eqref{bf} ask for uniformity  of the constant, for all $\tau>0.$\\

Obviously functions asymptotic to the power $s^q$ at infinity satisfy 
\eqref{reg:var}.
But, there are  functions  satisfying \eqref{reg:var} which are not 
asymptotic to  $s^q$ at infinity, see for instance $f(s)=s^q(\ln(e+s))^\al$, where $q\in(1,2^*-1)$ and $\al>0 ,$ or the prototype functions given by \eqref{protot:f}, or  
\begin{equation}\label{crit:exp}
f(s)=\frac{|s|^{2^* -2}s}{\exp{\Big(\big[\ln(e+|s|)\big]^\be\Big)}} \qq{for} 0<\be<1. 
\end{equation}

Observe that for any $\e>0$ fixed,
$$
\lim_{s\to+\infty}\frac{\exp{\Big(\big[\ln(e+|s|)\big]^\be\Big)}}{s^{\e}}=\lim_{t\to+\infty}\frac{e^{(t^\be)}}{e^{\e t}}\left(\frac{e^{t}}{e^t-e}\right)^\e=0.
$$

Although the following results can be stated for purely positive measurable and locally integrable functions, we will state them for continuous functions.

First,  the Karamata Representation Theorem holds for regularly
varying functions, see \cite[Theorem III]{Karamata}, cf. \cite[Corollary to Theorem 0.6]{Resnick}, cf. also  \cite[Theorem 1.2 \& Lemma 2.1]{Seneta}.

\begin{thm}[Karamata representation Theorem.] \label{rrvf}
Let $f\in C(0,+\infty)$ be positive at infinity and regularly varying at infinity of order $q\in\R$. Then, there exists a regularly varying function (at infinity) of index 0 {\it (slowly varying)} denoted by $L$ such that
\begin{equation}\label{f:q:L}
f(s)=s^q L(s),
\end{equation}
where $L$ has the canonical form
\begin{equation}\label{L}
L(s)=\zeta(s){\rm exp}\left(\int_{s_0}^s \frac{\rho(t)}{t} dt\right), \quad \forall s\geq s_0,    
\end{equation}
for some $s_0>0$, and where $\zeta$ is a  positive, continuous function  with a positive limit at infinity, and $\rho$ is  a continuous function  such that $\displaystyle\lim_{s\to+\infty}\rho(s)=0.$
\end{thm}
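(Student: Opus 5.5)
The plan is to reduce the representation to a statement about the slowly varying factor and then apply the classical additive-to-multiplicative trick. I first define $L(s):=f(s)/s^q$ for $s$ large, where $f>0$ by hypothesis. Then \eqref{reg:var} gives $L(\tau s)/L(s)\to 1$ for every $\tau>0$, so $L$ is slowly varying; this step is immediate. Next I pass to logarithmic variables: set $h(x):=\ln L(e^x)$, which is continuous for $x$ large. The slow variation of $L$ becomes
\[
h(x+y)-h(x)\to 0 \quad (x\to+\infty) \quad \text{for every fixed } y\in\R .
\]

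The key intermediate step is to upgrade this pointwise limit to uniform convergence for $y$ in compact sets; this is the uniform convergence theorem quoted above from \cite[Theorem 1.1]{Seneta}. If I had to reprove it, I would argue by contradiction using Egorov's theorem. Suppose there are $x_n\to\infty$ and $y_n\in[0,1]$ with $|h(x_n+y_n)-h(x_n)|\ge\e$. Then, for all large $n$, the sets
\[
\{z\in[0,2]:|h(x_n+z)-h(x_n)|<\e/2\} \qquad\text{and}\qquad \{z\in[0,2]:|h(x_n+y_n+z')-h(x_n+y_n)|<\e/2\},
\]
the latter shifted by $y_n$, both have measure close to full. Hence they intersect, which yields a contradiction via the triangle inequality. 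I expect this measure-theoretic step to be the main obstacle. The rest of the argument is routine calculus.

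With uniform convergence in hand, I build the representation by smoothing. Define
\[
\tilde h(x):=\int_x^{x+1}h(t)\,dt=\int_0^1 h(x+t)\,dt .
\]
By uniformity on $[0,1]$, we get $\tilde h(x)-h(x)\to 0$. Moreover, $\tilde h$ is $C^1$ with
\[
\tilde h'(x)=h(x+1)-h(x)=:\rho(e^x)\to 0 .
\]
Writing $h=(h-\tilde h)+\tilde h(x_0)+\int_{x_0}^x\tilde h'$ and changing variables back with $s=e^x$, $t=e^u$ (so $du=dt/t$), I obtain
\[
L(s)=\zeta(s)\exp\Big(\int_{s_0}^s\frac{\rho(t)}{t}\,dt\Big),
\]
where $s_0=e^{x_0}$ and $\zeta(s)=\exp\big(h(\ln s)-\tilde h(\ln s)+\tilde h(x_0)\big)$.

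Finally, I check the required properties of each factor. The function $\zeta$ is positive and continuous, and $\zeta(s)\to e^{\tilde h(x_0)}>0$ as $s\to\infty$. The function $\rho(t)=h(\ln t+1)-h(\ln t)$ is continuous and tends to $0$. Together these give \eqref{f:q:L}--\eqref{L}.
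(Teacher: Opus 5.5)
Your proof is correct. The paper gives no argument of its own for this statement: it imports the result from Karamata, \cite[Corollary to Theorem 0.6]{Resnick} and \cite[Theorem 1.2]{Seneta}. What you wrote is the standard self-contained proof found in the monographs the paper cites (e.g.\ \cite{Bingham}). You prove the uniform convergence theorem for $h(x)=\ln L(e^x)$ by a measure argument, then average $h$ over a unit window in logarithmic scale, so that $\rho(t)=h(\ln t+1)-h(\ln t)$.

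Your measure step does close. The second family $z\mapsto h(x_n+y_n+z)-h(x_n+y_n)$ is again pointwise null because $x_n+y_n\to\infty$, so Egorov (or dominated convergence applied to indicators) works for it too. The shifted set meets $[0,2]$ in $[y_n,2]$, which has length at least $1$. Hence exceptional sets of measure less than $1/2$ each force a common point.

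The Resnick reference, which the paper points to explicitly, takes a different route through Karamata's integral theorem. With $b(s)=sL(s)/\int_{s_1}^s L(t)\,dt\to 1$, one takes $\zeta=c\,b$ and $\rho=b-1$, which gives explicit formulas in terms of $L$. However, proving $b\to 1$ already requires uniform control of $L(su)/L(s)$ under the integral, i.e.\ the same uniform convergence theorem. So that route does not bypass the step you flagged as the main obstacle. Your route is more elementary, and for continuous $f$ it gives the continuity of $\zeta$ and $\rho$ directly from that of $h$.
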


As a corollary, the following property   is proved in \cite[Section 1.5, pg.18]{Seneta}:
\begin{cor}\label{cor:superlinear}
It follows from Theorem \ref{rrvf} that for all $q'<q$,
\begin{equation}\label{superp}
 \lim_{s\to+\infty}\frac{f(s)}{s^{q'}}=+\infty. 
\end{equation}
In particular \eqref{superp} holds for $q'=p-1$ when we assume  $q\in(p-1,p^*-1]$.
\end{cor}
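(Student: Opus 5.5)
The plan is to use the Karamata representation of Theorem \ref{rrvf}. Since $f\in RV_q$ and is positive at infinity, we can write $f(s)=s^{q}L(s)$ with
\[
L(s)=\zeta(s)\exp\Big(\int_{s_0}^s\frac{\rho(t)}{t}\,dt\Big),
\]
where $\zeta(s)\to \zeta_\infty>0$ and $\rho(t)\to 0$. For $q'<q$ we then have
\[
\frac{f(s)}{s^{q'}}=s^{q-q'}L(s).
\]
So it suffices to show that a slowly varying function cannot decay faster than any power. Precisely, we want: for every $\e>0$, $s^{\e}L(s)\to+\infty$. We then apply this with $\e=q-q'>0$.

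To prove it, fix $\e>0$ and set $\de=\e/2$. Since $\rho(t)\to0$, choose $s_1\ge s_0$ such that $\rho(t)\ge-\de$ for $t\ge s_1$. Also choose $s_1$ large enough that $\zeta(s)\ge\zeta_\infty/2$ for $s\ge s_1$. For $s\ge s_1$,
\[
\int_{s_0}^{s}\frac{\rho(t)}{t}\,dt\ \ge\ A-\de\ln\frac{s}{s_1},
\qquad A:=\int_{s_0}^{s_1}\frac{\rho(t)}{t}\,dt,
\]
where $A$ is a finite constant because $\rho$ is continuous on the compact interval $[s_0,s_1]$. Hence
\[
L(s)\ge \frac{\zeta_\infty}{2}\,e^{A}\,s_1^{\de}\,s^{-\de},
\]
and therefore $s^{\e}L(s)\ge C s^{\e-\de}=Cs^{\e/2}\to+\infty$, with $C>0$ independent of $s$. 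This gives \eqref{superp}.

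For the final assertion, assume $q\in(p-1,p^*-1]$. Then $q'=p-1<q$ is admissible, so \eqref{superp} holds with $q'=p-1$, i.e. $f$ is $p$-superlinear at infinity.

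The only delicate step is the lower bound on the exponential factor, which uses just $\liminf\rho\ge0$. Choosing $\de$ strictly smaller than $\e$ absorbs the slowly varying correction. The rest is bookkeeping of constants.
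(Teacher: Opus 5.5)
Your proof is correct and takes essentially the paper's approach: the paper only says the corollary follows from the Karamata representation $f(s)=s^{q}L(s)$ of Theorem \ref{rrvf} (citing Seneta). Your bound $L(s)\ge C s^{-\delta}$, obtained from $\rho(t)\to 0$ and $\zeta(s)\to\zeta_\infty>0$, is the standard argument that fills in that step.
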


\medskip 

The  following Uniform Convergence Theorem for measurable slowly varying functions was proved by Karamata in the continuous case, 
see also \cite[Theorem 1.1]{Seneta} and \cite[Theorem 1.5.2]{Bingham}.

\begin{thm}[Uniform Convergence Theorem]
    \label{usfF}
Let $f\in C(0,+\infty)$ positive at infinity and satisfying  \eqref{reg:var} for some $q\in\R$.  Let $F(s)=:\int_0^s f(t)dt$. 
 Then 
\begin{enumerate}[label=\rm{(\roman*)}]
\item  $F$ is a function regularly varying at infinity of order $q+1$:
 \begin{equation}
 \label{reg:var:F}   
 \lim_{s\to+\infty}\frac{F(\tau s)}{F(s)}=\tau^{q+1},\qquad \forall \tau>0,
 \end{equation}
 and satisfies
\begin{align} 
\lim_{s\to+\infty}\frac{sf(s)}{F(s)}=q+1.
\label{eq:usfF}
\end{align}
\item  Furthermore, for all $a,  b\in \R^+,$ $0<a<b$,  the limits  \eqref{reg:var} and \eqref{reg:var:F} are  uniform for $ \tau \in [a,b].$
\item  Moreover, if $f$ is bounded on the interval $[0,s_0]$, for $s_0$ defined by \eqref{L}, then
the limits  \eqref{reg:var} and \eqref{reg:var:F} are  uniform for $ \tau \in (0,b]$ if $q>0.$ 
\end{enumerate} 
\end{thm}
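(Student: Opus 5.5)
I will prove the three parts in order from the Karamata representation (Theorem \ref{rrvf}): write $f(s)=s^qL(s)$ for $s\ge s_0$, with $L$ slowly varying and
\[
L(s)=\zeta(s)\exp\Big(\int_{s_0}^s\frac{\rho(t)}{t}\,dt\Big).
\]

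\emph{Uniformity on compact sets.} I start with (ii) for $f$, since the other claims rest on it. For $\tau\in[a,b]$,
\[
\frac{f(\tau s)}{f(s)}=\tau^q\,\frac{\zeta(\tau s)}{\zeta(s)}\exp\Big(\int_s^{\tau s}\frac{\rho(t)}{t}\,dt\Big).
\]
Given $\e>0$, choose $s_1$ so that $|\rho|<\e$ on $[as_1,\infty)$ and $\zeta(t)/\zeta_\infty\in(1-\e,1+\e)$ there, where $\zeta_\infty>0$ is the limit of $\zeta$. Then for $s\ge s_1$ the integral is bounded by $\e\,|\ln\tau|\le\e\max\{|\ln a|,|\ln b|\}$, and the ratio of the $\zeta$'s is within $O(\e)$ of $1$, uniformly in $\tau\in[a,b]$. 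This gives uniform convergence of \eqref{reg:var} on $[a,b]$.

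\emph{Part (i).} For \eqref{eq:usfF} I first need $q+1>0$; in the paper's setting $q>p-1>0$, so I assume $q>-1$. Substituting $t=s\sigma$,
\[
\frac{F(s)}{sf(s)}=\frac{F(s_2)}{sf(s)}+\int_{s_2/s}^1\frac{f(s\sigma)}{f(s)}\,d\sigma .
\]
By Corollary \ref{cor:superlinear} applied with $q'\in(-1,q)$, $sf(s)\to\infty$, so the first term vanishes as $s\to\infty$. In the integral, the integrand converges to $\sigma^q$ pointwise, and uniformly on every $[\delta,1]$ by the step above. Near $0$ I use a Potter-type bound from the representation: for any $\e>0$ with $q-\e>-1$, $f(s\sigma)/f(s)\le C\sigma^{q-\e}$ whenever $s\sigma\ge s_2$. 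This bound is integrable on $(0,1]$, so dominated convergence gives $F(s)/(sf(s))\to\int_0^1\sigma^q\,d\sigma=1/(q+1)$. Then \eqref{reg:var:F} follows from
\[
\frac{F(\tau s)}{F(s)}=\tau\,\frac{f(\tau s)}{f(s)}\cdot\frac{F(\tau s)/(\tau s f(\tau s))}{F(s)/(sf(s))}\longrightarrow\tau^{q+1}.
\]
The same identity transfers the uniformity of (ii) to $F$.

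\emph{Part (iii).} For $q>0$ and $\tau\in(0,b]$ I split into two cases.
\begin{enumerate}
\item If $\tau s\ge s_0$, the Potter bound gives $f(\tau s)/f(s)\le C\tau^{q-\e}$ with $q-\e>0$, which is small for small $\tau$.
\item If $\tau s<s_0$, then $f(\tau s)\le\sup_{[0,s_0]}f$ by the boundedness assumption, while $f(s)\to\infty$ because $q>0$. Hence the ratio is uniformly small once $s$ is large.
\end{enumerate}
Since $\tau^q$ is also small for small $\tau$, uniformity on $(0,\delta]$ together with uniformity on $[\delta,b]$ from (ii) gives uniformity on $(0,b]$. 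The argument for $F$ is identical, since $F$ is bounded on $[0,s_0]$ and tends to infinity.

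\emph{Main obstacle.} The key step is the Potter-type bound
\[
\frac{L(s\sigma)}{L(s)}\le C\sigma^{-\e}\qquad\text{for } s\sigma\ge s_2,\ \sigma\le 1 .
\]
I would derive it directly from the representation: with $|\rho|<\e$ beyond $s_2$, the exponential factor is at most $\exp(\e\ln(1/\sigma))=\sigma^{-\e}$, and the $\zeta$ ratio is bounded.
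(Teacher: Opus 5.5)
Your proof is correct (for $q>-1$, which you flag), but it does not follow the paper's route. The paper gives no proof of this theorem; it simply defers to Karamata, \cite[Theorem 1.1]{Seneta} and \cite[Theorem 1.5.2]{Bingham}.

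\textbf{Comparison with the cited proofs.} In those sources the logical order is the reverse of yours. Uniformity on compact $\tau$-sets is proved first, directly from the pointwise limit \eqref{reg:var} by a category/measure argument, and the representation of Theorem \ref{rrvf} is then derived from it. By starting from Theorem \ref{rrvf}, you turn (ii) into a two-line estimate. You then get (i) and (iii) from a Potter-type bound plus dominated convergence, which is the standard proof of Karamata's integral theorem.

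This is legitimate here only because the paper imports Theorem \ref{rrvf} as an independent cited result. As a self-contained development it would be circular, since the hard step (pointwise implies locally uniform convergence) is hidden inside the representation theorem. The cited proofs also cover merely measurable $f$.

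In exchange, your route is more transparent. It exposes that (i) needs a lower bound on $q$: for $q<-1$, $F$ has a finite limit and \eqref{reg:var:F} fails whenever that limit is nonzero. This is harmless in the paper since $q>p-1>0$. It also isolates the Potter bound that the paper later uses in \eqref{bd:fn}.

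\textbf{Small repairs.}
\begin{itemize}
\item In (ii), use the interval $[\min\{a,1\}s_1,\infty)$, so that both $s$ and $\tau s$ lie in it.
\item In (iii), take $\delta<1$, since your Potter bound requires $\sigma\le 1$.
\item In (iii), split at the Potter threshold $s_2$ rather than $s_0$. Then $f$ is bounded on $(0,s_2]$ by the hypothesis on $[0,s_0]$ together with continuity on $[s_0,s_2]$.
\item For $F$ in (iii), the bound $F(\tau s)/F(s)\le C\tau^{q+1-\e}$ (for $\tau\le 1$, $\tau s$ large) follows from your product identity, since $F(x)/(xf(x))$ is bounded above and below for large $x$.
\end{itemize}
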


 \bigskip

\section{Bifurcation results and a non-existence result}

In this section we include new results on bifurcation for slightly subcritical nonlinearities that have been recently studied    in \cite{Cuesta_Pardo_MN_2024}.   See \cite[Theorem 3.2]{Cuesta_Pardo_MN_2024} for a global bifurcation result with Rabinowitz's alternative, and \cite[Theorem 3.4]{Cuesta_Pardo_MN_2024} for sufficient conditions guaranteeing the positivity of the solutions in the bifurcated branch.

\medskip 

As an application, let us  state the  following theorem that  provide  the existence of a continuum of positive solutions to \eqref{eq:ell:pb},
bifurcating  to the right at $(\la_1,0)$. \\

\begin{thm}\label{th:ex2}
Assume that $f$  satisfies hypothesis {\rm {\rm \ref{f1}}}{\textbf -}{\rm \ref{f2}} and also
{\rm \ref{H5}}{\textbf -}{\rm \ref{H6}}.
Assume further that $m\in C(\Omb)$ 
satisfies hypothesis {\rm \ref{m0}}.
Then 
\begin{enumerate}
\item[\rm (i)] there exists an  unbounded, closed, connected set $ \mathscr{C}^+\subset \R\times W_0^{1,p}(\Om)$  of  positive  solutions to \eqref{eq:ell:pb},  bifurcating from the trivial solution set at the bifurcation point $(\la_1,0)$; 

\item[\rm (ii)]
$ \mathscr{C}^+$ bifurcates to the right. Precisely,
there exists $\delta>0$ such that 
\begin{equation*}
(\la,u)\in {\mathscr C}^+, 0<|\la-\la_1|<\delta, \,  0\not=\|u\|<\delta \Longrightarrow \la>\la_1;
\end{equation*}

\item[\rm (iii)]  if we assume  {\rm \ref{m2}},  there exist   $ \La>\la_1,$ such that  if problem \eqref{eq:ell:pb} has  non-negative non-trivial solutions, then    $\la \le \La$. 
\end{enumerate}
\end{thm}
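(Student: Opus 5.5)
We prove the three items in order. Items (i) and (ii) come from the global bifurcation machinery of \cite[Theorem 3.2 and Theorem 3.4]{Cuesta_Pardo_MN_2024}, with the Orlicz-space compactness replaced by a Lebesgue--Vitali argument. Item (iii) is an a priori bound obtained by testing with eigenfunctions on the thick zero set.

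\emph{Item (i).} Write \eqref{eq:ell:pb} as the fixed point problem $u=(-\De_p)^{-1}\big(\la |u|^{p-2}u+m\tilde f(u)\big)$, where $\tilde f$ is the odd extension of $f$ (or $f(u^+)$). By \ref{f1}, the Nemytskii map $u\mapsto m\tilde f(u)$ sends $W_0^{1,p}(\Om)$ into $W^{-1,p'}(\Om)$, since $|f(s)|\le C(1+|s|^{p^*-1})$.

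The key point is compactness. If $u_n\wto u$ in $W_0^{1,p}$, then $u_n\to u$ a.e. up to a subsequence, and $\sup_n\|u_n\|_{p^*}<\infty$. For the sequence $|f(u_n)|^{(p^*)'}$ we have, for every $\e>0$,
\[
|f(u_n)|^{(p^*)'}\le C_\e+\e|u_n|^{p^*},
\]
because $f(s)=o(s^{p^*-1})$. This gives uniform integrability, so Vitali's theorem yields $f(u_n)\to f(u)$ in $L^{(p^*)'}\subset W^{-1,p'}$. Hence the operator is compact.

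By \ref{H5}, $f(s)=o(s^{p-1})$ at $0$, so the problem is a compact perturbation of $-\De_p u=\la|u|^{p-2}u$ of higher order near zero. Since $\la_1$ is simple and isolated with an index jump in the Leray--Schauder degree (Dr\'abek / del~Pino--Man\'asevich), Rabinowitz's alternative gives a continuum $\mathscr C$ emanating from $(\la_1,0)$. Near $(\la_1,0)$, solutions normalized by $\|u\|$ converge in $C^1$ to $\vf_1$ by the $C^{1,\mu}$ estimates, so they lie in $\mathscr P^+$.

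To keep positivity along the whole branch, use the monotonicity \ref{f2}. The operator $-\De_p+(c_1\sup|m|)\,u^{p-1}$ applied to $u$ gives a right-hand side with nonnegative sign on bounded sets. The strong maximum principle of V\'azquez then shows that $\mathscr C^+\setminus\{(\la_1,0)\}$ is open and closed in $\mathscr C\setminus\{(\la,0)\}$. Moreover, positive solutions cannot reach $(\la,0)$ with $\la\ne\la_1$, since $\la_1$ is the only eigenvalue with a positive eigenfunction. So the alternative forces $\mathscr C^+$ to be unbounded.

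\emph{Item (ii).} Take $(\la_n,u_n)\in\mathscr C^+$ with $u_n\to0$, $\la_n\to\la_1$, and set $v_n=u_n/\|u_n\|_\infty\to\vf_1$ in $C^1$. Testing the equation with $\vf_1$ and using Picone's identity (D\'iaz--Sa\'a or Allegretto--Huang form) gives
\[
(\la_n-\la_1)\int u_n^{p-1}\vf_1\ \ge\ \ldots
\]
A cleaner route is to test with $u_n$ and $\vf_1^p/u_n^{p-1}$. Picone's inequality gives
\[
0\le \int |\na\vf_1|^p-\int|\na u_n|^{p-2}\na u_n\cdot\na\Big(\frac{\vf_1^p}{u_n^{p-1}}\Big)
=(\la_1-\la_n)\int\vf_1^p-\int m\frac{f(u_n)}{u_n^{p-1}}\vf_1^p .
\]
Dividing by $f(\|u_n\|_\infty)/\|u_n\|_\infty^{p-1}$ and using \eqref{r} uniformly (the uniform convergence theorem at $0$, valid since $\eta>0$), we get
\[
\frac{f(u_n)}{f(\|u_n\|_\infty)}\,\frac{\|u_n\|_\infty^{p-1}}{u_n^{p-1}}\vf_1^p\ \to\ \vf_1^{\eta-p+1}\vf_1^p=\vf_1^{\eta+1}.
\]
Therefore
\[
(\la_n-\la_1)\int\vf_1^p\cdot\frac{\|u_n\|_\infty^{p-1}}{f(\|u_n\|_\infty)}\ \to\ -I_0>0 .
\]
This forces $\la_n>\la_1$ for large $n$. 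The main obstacle is justifying passage to the limit near $\p\Om$, where $\vf_1^p/u_n^{p-1}$ has a $0/0$ form. We handle it with the $C^1$ convergence of $v_n$ in $\mathscr P^+$, which gives $v_n\ge c\vf_1$ uniformly, together with dominated convergence.

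\emph{Item (iii).} Let $u\ge0$ be a nontrivial solution, hence $u\in\mathscr P^+$, and let $\psi=\vf_1(B)$ for a ball $B\subset\om^0$. Picone's inequality with $\psi^p/u^{p-1}$, using $m=0$ on $B$, gives
\[
0\le(\la_1(B)-\la)\int_B\psi^p .
\]
Hence $\la\le\la_1(B)$, and we may take $\La:=\sup\{\la:\text{a nonnegative nontrivial solution exists}\}$, which is finite. By (ii), $\mathscr C^+$ contains points with $\la>\la_1$, so $\La>\la_1$.
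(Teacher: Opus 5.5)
\textbf{Item (ii): the inequality points the wrong way.} This is a genuine gap. Rearranging your Picone display gives
\[
(\la_n-\la_1)\int_\Om\vf_1^p\ \le\ -\int_\Om m\,\frac{f(u_n)}{u_n^{p-1}}\,\vf_1^p .
\]
Dividing by $f(\|u_n\|_\infty)/\|u_n\|_\infty^{p-1}>0$ and passing to the limit, you only obtain
\[
\limsup_{n\to\infty}\,(\la_n-\la_1)\,\frac{\|u_n\|_\infty^{p-1}}{f(\|u_n\|_\infty)}\int_\Om\vf_1^p\ \le\ -I_0 .
\]
This is an upper bound, and it is perfectly compatible with $\la_n<\la_1$. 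Your step ``therefore $\ldots\to -I_0$'' would need the Picone remainder, divided by the same normalization, to tend to zero, and nothing in your argument controls it. The boundary $0/0$ issue you flag as the main obstacle is not where the difficulty lies.

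What you need is the reverse inequality. It comes from the variational characterization of $\la_1$, equivalently Picone with the roles of $u_n$ and $\vf_1$ exchanged. Test the equation with $u_n$ and use $\|u_n\|^p\ge\la_1\int_\Om u_n^p$:
\[
(\la_n-\la_1)\int_\Om u_n^p\ \ge\ -\int_\Om m\,f(u_n)\,u_n .
\]
Divide by $\|u_n\|_\infty f(\|u_n\|_\infty)$ and write $u_n=\|u_n\|_\infty v_n$ with $0\le v_n\le 1$ and $v_n\to\vf_1$. Since $\eta>0$, $f(\tau s)/f(s)\to\tau^\eta$ uniformly for $\tau\in(0,1]$ as $s\to0^+$, which also gives the domination. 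You get $\liminf_n(\la_n-\la_1)\frac{\|u_n\|_\infty^{p-1}}{f(\|u_n\|_\infty)}\int_\Om v_n^p\ge -I_0>0$, hence $\la_n>\la_1$ for $n$ large. This route has no boundary singularity at all.

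\textbf{Item (i): passing from $\mathscr C$ to $\mathscr C^+$.} This is a smaller point. Rabinowitz's alternative gives unboundedness of the whole continuum $\mathscr C$, and you still have to transfer it to $\mathscr C^+$.
\begin{itemize}
\item With the odd extension this works. Maximality makes $\mathscr C$ symmetric, so $\mathscr C=\mathscr C^+\cup(-\mathscr C^+)$ once every nontrivial element of $\mathscr C$ is known to have a sign. You should say this explicitly.
\item With the other option you mention, $f(u^+)$, it fails. The set $\{(\la_1,-t\vf_1):t\ge0\}$ is then an unbounded half-line of solutions contained in $\mathscr C$, so the alternative says nothing about $\mathscr C^+$.
\end{itemize}

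Your Vitali compactness step is the same as the paper's (Proposition \ref{pro:compG}, which the paper combines with the Cuesta--Pardo bifurcation results). Your Picone argument on a ball $B\subset\om^0$ for (iii) is correct.
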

\begin{proof}
We only have to adapt the proofs of \cite{Cuesta_Pardo_MN_2024} to our case ($V(x)\equiv 1$), firstly \cite[Theorem 3.4]{Cuesta_Pardo_MN_2024}, and secondly \cite[Theorem 1.1]{Cuesta_Pardo_MN_2024}. Notice  that  the operator $G$ is compact, according to Proposition \ref{pro:compG}. 
\end{proof}

We end this section with a result on an upper bound of  the projection of 
$ {\mathscr C}^+$  on the parameter space, $\La$ (see  \cite[Proposition 4.1]{Cuesta_Pardo_MN_2024}).

\begin{pro}[A non-existence result]
\label{pro:non:exist}
Let  $f$ satisfy hypothesis   {\rm \ref{f1}} and {\rm \ref{H5}}. Assume that $m\in C(\Omb)$  satisfies {\rm \ref{m0}}.
\begin{enumerate}
\item[(i)] Problem   \eqref{eq:ell:pb} does not admit a non-negative nontrivial solution $u\in  W_0^{1,p} (\Om)$ for any 
$$
\la\ge\la_1\big(\om^{+,0}\big)+C_0\sup m^+,
$$
where $C_0$ is  defined  in \eqref{C0}.
\item[(ii)] Assume also hypothesis {\rm \ref{m2}}.
Problem   \eqref{eq:ell:pb} does not admit a non-negative nontrivial solution for any 
$$
\la\ge\la_1\big(\om^{0}\big).
$$
\end{enumerate}
\end{pro}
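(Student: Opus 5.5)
We argue by contradiction in both parts, testing the equation against the principal eigenfunction of a subdomain and using Picone's identity for the $p$-Laplacian. Let $u\ge0$, $u\not\equiv0$, be a weak solution. By Remark~\ref{reguC1}, $u\in C^{1,\mu}(\Omb)$ and, by the strong maximum principle, $u>0$ in $\Om$. The mechanism is the classical one: if $\om\subset\Om$ is open with $u>0$ on $\overline\om$ and
\[
-\De_p u\ \ge\ \mu\, u^{p-1}\quad\text{in }\om
\]
with $\mu\ge\la_1(\om)$ and strict inequality on a set of positive measure, then Picone's identity gives a contradiction. Indeed, take $\psi=\vf^p/u^{p-1}$, where $\vf=\vf_1(\om)$ (or $\vf_1$ of a connected component) is extended by zero; this is an admissible test function since $u$ is bounded below on $\operatorname{supp}\vf$. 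The identity then yields
\[
\int_\om|\na\vf|^p\ \ge\ \int_\om |\na u|^{p-2}\na u\cdot\na\Big(\frac{\vf^p}{u^{p-1}}\Big)\ \ge\ \mu\int_\om\vf^p,
\]
with strictness coming from the strict inequality or from the equality case of Picone, which forces $u=c\vf$ and contradicts $u>0$ on $\p\om$. This gives $\la_1(\om)>\mu$.

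For (i), take $\om=\om^{+,0}$, or rather a connected component $\om'$ realising $\la_1(\om^{+,0})=\la_1(\om')$. On $\om'$ we have $m\ge0$, so by the definition of $C_0$ in \eqref{C0},
\[
m f(u)\ \ge\ -C_0\, m^+ u^{p-1}\ \ge\ -C_0\sup m^+\,u^{p-1}.
\]
Hence $-\De_p u\ge(\la-C_0\sup m^+)u^{p-1}$ in $\om'$, and the test above gives $\la-C_0\sup m^+<\la_1(\om')$, which contradicts $\la\ge\la_1(\om^{+,0})+C_0\sup m^+$.

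The only subtlety is that $u$ does not vanish on $\p\om'$, so we must control the boundary terms. Since $\vf\in W^{1,p}_0(\om')$ and $u\ge c>0$ on $\overline{\om'}$ (which lies in $\Om$ if $\om'\Subset\Om$), the function $\psi$ belongs to $W^{1,p}_0(\Om)$. If $\om'$ touches $\p\Om$, we use $\p u/\p\nu<0$ together with the boundary behaviour of $\vf$, and argue via the standard approximation $\psi_\e=\vf^p/(u+\e)^{p-1}$ with $\e\to0$ by Fatou's lemma and monotone convergence. I expect this approximation step, together with the strictness, to be the main technical point.

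For (ii), take a component of $\om^0$, which has $C^1$ boundary by \ref{m2}. There $m\equiv0$, so $-\De_p u=\la u^{p-1}$ exactly, and the same argument with $\mu=\la$ gives $\la<\la_1(\om^0)$. Strictness follows because equality in Picone would force $u$ to be a multiple of $\vf_1(\om^0)$, which vanishes on $\p\om^0$, whereas $u>0$ in $\Om$. Hypothesis \ref{f1} is used only to ensure that weak solutions are regular enough (bounded and $C^1$) to apply this machinery.
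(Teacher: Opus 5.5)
Your argument is correct. The paper gives no proof of this proposition; it only refers to \cite[Proposition 4.1]{Cuesta_Pardo_MN_2024}. Your Picone-identity argument on a suitable component $\omega'$ of $\omega^{+,0}$ (resp.\ of $\omega^{0}$) is the standard route for such nonexistence results for $-\Delta_p$, and it gives a self-contained proof.

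Two steps that you only sketch should be written out.

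First, you can use $\psi_\varepsilon=\varphi^p/(u+\varepsilon)^{p-1}$ in every case. Then you need neither $\omega'\Subset\Omega$, nor $\partial u/\partial\nu<0$, nor any boundary behaviour of $\varphi$. Since $\lambda u^{p-1}+m f(u)\ge\mu u^{p-1}\ge 0$ on $\omega'$, monotone convergence gives $\lambda_1(\omega')\ge\mu$. If $\mu=\lambda_1(\omega')$, the nonnegative Picone remainders $L_\varepsilon$ satisfy $\int_{\omega'}L_\varepsilon\to 0$. By Fatou's lemma the remainder of the pair $(u,\varphi)$ then vanishes a.e., so $\nabla(\varphi/u)=0$ and $\varphi=cu$ in $\omega'$ with $c>0$.

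Second, in (i) $\partial\omega'$ has no regularity, so the contradiction with ``$\varphi=0$ on $\partial\omega'$'' must be made precise.
\begin{itemize}
\item $\partial\omega'\cap\Omega\neq\emptyset$, because $\Omega$ is connected and $\omega'\cap\Omega^-=\emptyset$.
\item A point $x_0\in\partial\omega'\cap\Omega$ is not in $\omega^{+,0}$, hence lies in $\overline{\Omega^-}$.
\item On a ball $B\ni x_0$ where $u\ge\delta>0$, you get $\min\{\varphi,c\delta\}/(c\delta)=\chi_{\omega'}$. This is a $\{0,1\}$-valued function in $W^{1,p}(B)$, hence constant.
\item But both $B\cap\omega'$ and $B\cap\Omega^-$ have positive measure, a contradiction.
\end{itemize}
The same argument, with $\Omega^-$ replaced by $\Omega^+\cup\Omega^-$, settles (ii). So the $C^1$ regularity of $\partial\omega^0$ is not needed there; \ref{m2} only serves to guarantee $\omega^0\neq\emptyset$.

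Finally, if $\omega^{+,0}$ has infinitely many components, a component realising $\lambda_1(\omega^{+,0})$ still exists. This follows from the Faber--Krahn lower bound $\lambda_1(\omega)\ge C|\omega|^{-p/N}$.
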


\section{On Palais-Smale sequences, and on an existence result for \texorpdfstring{$\la\le\lambda_1$}{}}
\label{sec:exist}
In this section we prove  that  \textit{ Palais-Smale  sequences} are bounded and possesses a  strongly convergent subsequence, whenever $\la<\la_1\big(\om^{0}\big)$,  see Proposition \ref{pro:PS} and Theorem \ref{th:comp:PS} respectively. 

It is  standard to distinguish the cases $\lambda<\lambda_1$,    $\lambda=\lambda_1$ and $\lambda>\lambda_1$. In the first two cases, positive solutions can be obtained by a Mountain--Pass Theorem, see Theorem \ref{th:la<la1}.

\subsection{On Palais-Smale sequences}

Let us consider the functional $J_\la : W_0^{1,p}(\Om)\to\R$ associated to the problem  \eqref{eq:ell:pb}$_\la$, given by
\begin{align}
\label{def:J:lambda}
J_\la[u]&:=
\frac{1}{p}\int_{\Om}  |\na u|^p\, \, dx -
\frac{\la}{p}\int_{\Om}(u^+)^p\, dx -\int_{\Om}m(x)F(u^+)\, dx. 
\end{align}

The functional $J_\la$ is well defined  and of class $C^1$, with
\begin{align}
\langle J_\la'[u], \psi \rangle
&\,=\int_{\Om} |\na u|^{p-2} \na u\cdot\na \psi\,\, dx  -
\la\int_{\Om}(u^+)^{p-1}\psi \, dx -\int_{\Om}m(x)f(u^+)\psi\, dx ,\nonumber 
\end{align}
for all $\psi\in W_0^{1,p}(\Om).$

\begin{rem}\label{sol:+}
If $f(0)=0$, the choice $\psi=u^-$ gives $\langle J_\la'[u], u^- \rangle =-\int_{\Om} |\na u^-|^{p}\, dx $.
Consequently, nontrivial critical points of the functional $J_\la$ correspond to 
non-negative weak solutions to  \eqref{eq:ell:pb}$_\la$. By Remark \ref{reguC1}.(ii),  nontrivial critical points of the functional $J_\la$ are positive solutions to the problem  \eqref{eq:ell:pb}$_\la$.     
\end{rem}

\medskip
The next Proposition proves in particular that  \textit{ Palais-Smale  sequences} are bounded 
whenever $\la$ is smaller than $\la_1\big(\om^{0}\big)$.

\begin{pro}\label{pro:PS}
Assume  that   hypotheses  {\rm \ref{f1}}--{\rm \ref{f3}} are satisfied. Assume also that  
$m \in C(\Omb)$,  that    $\om^\pm,\ \Om^0$ satisfy hypothesis {\rm \ref{m0}}--{\rm \ref{m2}}, 
and that 
$$
\la< \la_1\big(\om^{0}\big).
$$
Let $\{\lambda_n\}$ be a sequence converging to  $\la .$
Then  any  {\rm (PS)$_{\lambda_n}$}  sequence, that is, a sequence $\{u_n\}\subset W_0^{1,p} (\Om)$ satisfying 
\begin{enumerate}
\item[$(PS_1)$]
$J_{\la_n }[u_n] \le C\, ,$ 
\item[$(PS_2)$] 
$|\langle J_{\lambda_n}'[u_n],  \psi\rangle |\le \e_n\,\|\psi\|$
where 
$\e_n \to 0$ as $n \to +\infty  ,$
\end{enumerate} 
is a bounded sequence.
\end{pro}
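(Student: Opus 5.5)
We argue by contradiction, in the spirit of Alama--Tarantello. Suppose $\|u_n\|\to+\infty$ along a subsequence and set $v_n:=u_n/\|u_n\|$. Passing to a further subsequence, $v_n\wto v$ weakly in $W_0^{1,p}(\Om)$, and $v_n\to v$ strongly in $L^r(\Om)$ for $r<p^*$ and a.e. The first step is a simple energy comparison. Testing $(PS_2)$ with $\psi=u_n^-$ gives $\|u_n^-\|\le\e_n$, so we may essentially work with $u_n^+$; in particular $v\ge0$. Next we combine $(PS_1)$ and $(PS_2)$ with $\psi=u_n$ to get
\[
\int_\Om m(x)\Big(\tfrac1p f(u_n^+)u_n^+-F(u_n^+)\Big)dx\le C+\e_n\|u_n\|.
\]
By \eqref{eq:usfF} in Theorem \ref{usfF}, $\frac1p sf(s)-F(s)\ge c\,sf(s)$ for large $s$, with $c=\frac1p-\frac1{q+1}>0$ since $q>p-1$. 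Using this together with the fact that $f$ is bounded on compacts, we obtain control of $\int_{\Om^+}m f(u_n^+)u_n^+$ by $C+\e_n\|u_n\|+\int_{\Om^-}|m|f(u_n^+)u_n^+$. The second ingredient comes from testing $(PS_2)$ with $\psi=u_n$ directly:
\[
\|u_n\|^p-\la_n\int (u_n^+)^p-\int m f(u_n^+)u_n^+=o(\|u_n\|).
\]
Dividing suitable combinations by $\|u_n\|^p$, the aim is to show that $\int_{\Om^-}|m|f(u_n^+)u_n^+/\|u_n\|^p$ and $\int_{\Om^+}m f(u_n^+)u_n^+/\|u_n\|^p$ are bounded.

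The second step is to localize and conclude $v\equiv0$ on $\Om^\pm$. Since $f$ is superlinear by Corollary \ref{cor:superlinear}, i.e. $f(s)s/s^{p}\to\infty$, boundedness of $\int_{\Om^-}|m|f(u_n^+)u_n^+/\|u_n\|^p$ forces $\int_{\Om^-}|m|(v_n^+)^p\to0$ via Fatou. Hence $v=0$ a.e. on $\Om^-$, and the same argument gives $v=0$ on $\Om^+$. To make this rigorous without compact embeddings at the critical level, I would use cutoff functions $\chi$ supported near $\overline{\Om^\pm}$, which are separated by \ref{m2}, and test $(PS_2)$ with $\chi^p u_n$. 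Growth bounds from \ref{f1} and \eqref{bf} would control the cross terms. Thus $v\in W^{1,p}_0(\om^0)$, using that $\p\om^0$ is $C^1$ so that functions vanishing outside $\om^0$ lie in $W_0^{1,p}(\om^0)$.

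The third step passes to the limit in the equation. For test functions $\psi\in C_c^\infty(\om^0)$ we have $m=0$ on $\operatorname{supp}\psi$, so $(PS_2)$ divided by $\|u_n\|^{p-1}$ gives $-\De_p v_n-\la_n (v_n^+)^{p-1}\to0$ in the dual of $W^{1,p}_0(\om^0)$. Using the $(S_+)$ property of $-\De_p$ on a cutoff localized near $\om^0$, we get strong convergence $v_n\to v$ there, so $\|v\|=1$ once we also show that the mass near $\Om^\pm$ vanishes in norm. Then $v\ge0$, $v\ne0$ solves $-\De_p v=\la v^{p-1}$ in some component of $\om^0$, so $\la\ge\la_1(\om^0)$, a contradiction. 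The main obstacle is showing that $\|v_n\|$ does not concentrate near $\Om^\pm$, i.e. obtaining strong convergence and $v\ne0$. Here I would try the localized identity $\int|\na(\chi v_n)|^p\to0$ for cutoffs $\chi$ equal to $1$ near $\overline{\Om^+}\cup\overline{\Om^-}$. This should follow from the previous energy bounds, with the slightly subcritical nonlinear term handled by Lebesgue--Vitali equi-integrability via \eqref{bf} and \ref{f1}.
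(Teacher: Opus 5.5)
Your Steps 1–2 are essentially sound. Testing $(PS_2)$ with $\chi^p u_n$ bounds $\int_\Om m^\pm f(u_n^+)u_n^+/\|u_n\|^p$. Fatou plus superlinearity then gives $v=0$ a.e.\ on $\Om^+\cup\Om^-$, which is a valid and more elementary substitute for the paper's use of Lemma~\ref{lem:conv}. Your local $S^+$ argument on $\om^0$, combined with $\la<\la_1(\om^0)$, in fact shows $v\equiv0$.

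\textbf{The gap: non-concentration near $\Om^+$.} You try to close by excluding loss of mass near $\overline{\Om^\pm}$, so that $\|v\|=1$, and you handle the nonlinear term by Lebesgue--Vitali equi-integrability via (f1) and \eqref{bf}. This cannot work near $\Om^+$. Test with $\chi^p u_n$, where $\chi\equiv1$ on $\Om^+$ and $\operatorname{supp}\chi\cap\Om^-=\emptyset$, and use $v\equiv0$. You get $\int_\Om\chi^p|\na v_n|^p\,dx=\int_\Om\chi^p m^+\frac{f(u_n^+)u_n^+}{\|u_n\|^p}\,dx+o(1)$, and
\[
\frac{f(u_n^+)u_n^+}{\|u_n\|^p}=\frac{f(\|u_n\|)}{\|u_n\|^{p-1}}\cdot\frac{f(v_n^+\|u_n\|)}{f(\|u_n\|)}\,v_n^+ .
\]
The first factor tends to $+\infty$ by Corollary~\ref{cor:superlinear}. \eqref{bf} only bounds the rest by $c_2\big(v_n^++(v_n^+)^{p^*}\big)$, which is bounded in $L^1$ but not equi-integrable. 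This term is exactly where a profile concentrating inside $\Om^+$ would sit. No growth or equi-integrability condition excludes it; only the energy bound $(PS_1)$ can.

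In your plan $(PS_1)$ is used only to obtain boundedness, which localized testing already gives, so the energy bound never enters essentially. Also, your target $\int_\Om|\na(\chi v_n)|^p\to0$ with $\chi\equiv1$ near $\overline{\Om^\pm}$ is inconsistent with $v\neq0$ on $\operatorname{supp}\chi\cap\om^0$, by weak lower semicontinuity.

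\textbf{The missing idea.} Use the Ambrosetti--Rabinowitz quotient \eqref{F-sf} sharply, not just for bounds. Since $\big(\frac1p sf(s)-F(s)\big)/(sf(s))\to\s=\frac1p-\frac1{q+1}>0$, split $\Om$ by the sign of $m$ in the inequality you derived from $(PS_1)$ and $(PS_2)$. This gives
\[
(\s-\e)\int_\Om m f(u_n^+)u_n^+\,dx\le C_\e+\e_n\|u_n\|+2\e\int_\Om m^-f(u_n^+)u_n^+\,dx ,
\]
with $C_\e$ independent of $n$. Using the bound on the $m^-$ term and letting $\e\to0$, you get $\limsup_n\int_\Om m f(u_n^+)u_n^+/\|u_n\|^p\,dx\le0$.

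On the other hand, your identity from $\psi=u_n$ reads $1=\la_n\int_\Om(v_n^+)^p\,dx+\int_\Om m f(u_n^+)u_n^+/\|u_n\|^p\,dx+o(1)$. Since $v\equiv0$, the last integral must tend to $1$, which is a contradiction.

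Equivalently, with $v\equiv0$ your cutoff test near $\Om^-$ gives $\int_\Om m^-f(u_n^+)u_n^+/\|u_n\|^p\,dx\to0$. Then even your non-sharp Step~1 inequality forces the $m^+$ integral to $0$, contradicting the same identity. So the proof should establish $v\equiv0$ and contradict this identity, rather than aim at strong convergence of $v_n$.
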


\begin{proof}  Let $\{u_n\}_{n\in \N} $ be a (PS)$_{\lambda_n}$ sequence in $W_0^{1,p}(\Om)$ and assume by contradiction that 
$\|u_n\|\rightarrow +\infty$.
The proof follows from  three claims.

\medskip

\noindent{\bf Claim 1.} {\it Let $v\in W_0^{1,p}(\Om)$ be the weak limit of $v_n=\frac{u_n}{\|u_n\|}$ and assume that $v_n\rightarrow v$, strongly   in $L^{p^*-1} (\Om)$ and a.e.   Then  $v=0$ a.e. }

\medskip

\noindent We divide the proof of this claim in two parts : (1.a)  $v= 0$ a.e. in $\Om\setminus \Om^0,$ and (1.b) $v= 0$ a.e. in $\Om^0$.\medskip

\noindent {\bf(1.a)}  \textit{$v\geq 0$, $v= 0$ a.e. in $\Om\setminus \Om^0,$  and $v_{|_{\om^0}}  \in W_0^{1,p}(\om^0).$}  

On the one hand, taking  $u_n^-$ as a test function in the condition $(PS_2),$  and since the supports of $u_n^+$, $u_n^-$ are disjoints, 
\begin{align*}
\|u_n^-\|^p&\leq \left|-\int_\Om |\na u_n^-|^p \, dx
- \int_\Om m(x)f (u_n^+)u_n^- \, dx \right| +\left|\int_\Om m(x)f(u_n^+)u_n^-\, dx\right|
\\[.3cm]
&\leq \e_n \|u_n^-\|  +C\|m\|_\infty |f(0)| \|u_n^-\|, 
\end{align*} 
so $\|u_n^-\|\leq D$   for some $D\geq0$,  and since $\|u_n\|\rightarrow +\infty$, then $v^-\equiv 0.$ 

Assume  now by contradiction that $v=v^+ \not=0$ a.e. in $\Om\setminus\Om^0$. Fix $\psi\in C_c^\infty (\Om)$.
By the reverse of the Lebesgue dominated convergence theorem, 
see for instance \cite[Theorem 4.9, p. 94]{Brezis_2011},
there exists $h\in L^1(\Om) $,   such that, up to a subsequence,
$$ 
\big(v_n^+(x)\big)^{p^*-1}\leq h(x), \;  \ \text{a.e.}\ x\in \Om,
$$ 
for all $n\in \N$. From the one hand,  by Lemma \ref{lem:conv} below, we have 
\begin{equation*}
m(\cdot) \frac{f(v_n^+\|u_n\|)}{f(\|u_n\|)}  \psi  \to  m(\cdot) (v^+)^q
\psi\qquad \hbox{strongly  in } L^1 (\Om).
\end{equation*}

On the other hand,  since Corollary \ref{cor:superlinear},  $f$ is $p$-superlinear at infinity,  implying  that
$$
\frac{1}{f(\|u_n\|)}\int_\Om |\na u_n|^{p-2}\na u_n \cdot\na \psi \, dx \rightarrow 0.
$$
Hence, using $(PS_2)$ for  the test function $\psi$, multiplying by $\frac{1}{f(\|u_n\|)}$ and passing to the limit we find
$$
\int_\Om m(x) (v^+)^q
\psi\, dx=0, \quad \forall \psi \in 
C_c^\infty (\Om).
$$
Hence  $0=v^+(x)= v(x)$, a.e.  $x\in\Om\setminus \Om^0$, a contradiction. In particular $v_{|_{\om^0}}  \in W_0^{1,p}(\om^0)$ as a consequence of \cite[Proposition 9.18]{Brezis_2011} since $\partial \om^0$ is of class $C^1$ by \ref{m2}. This   concludes the proof of   (1.a).\\

\noindent {\bf (1.b)} {\it $v= 0$ a.e. in $\Om^0.$} 

We claim 
 that  for any  $\psi \in C_c^{\infty}\big(\om^{0}\big)$  
\begin{equation}\label{rest}
\int_{\om^{0}}|\nabla v|^{p-2}\nabla v \cdot \nabla\psi \, dx = \la \int_{\om^{0}} v^{p-1}  \psi	\, dx\,. 
\end{equation}
Once this claim is  proved, using (1.a), thus, $v\ge 0$ and $v\in W_0^{1,p}\big(\om^{0}\big)$ and that  $\lambda<\lambda_1 \big(\om^{0}\big)$,
it must be $v\equiv 0$ a.e. in $\om^{0}$.  
Finally,  since $\partial \om^0$ is of measure $0$ by \ref{m2}, we  will  conclude {\bf (1.b)}.\medskip
 
Let us prove the claim.  For any 	$\psi \in W_0^{1,p}\big(\om^{0}\big)$
we have  by $(PS_2)$
\begin{equation}\label{rr:2}
\int_{\om^{0}}|\nabla v_n|^{p-2}\nabla v_n \cdot \nabla\psi \, dx = \la_n \int_{\om^{0}} (v_n^+)^{p-1}\psi	\, dx+o(1). 
\end{equation}
For any   $\chi\in C_c^1 (\om^0)$, $\chi\ge 0$, consider the test function $\psi =(v_n -v)\chi$ on $(PS_2)$. Since $v_n\to v$ strongly in  $L^p(\Om)$ we  have 
\begin{align*}
&\int_{\om^{0}} \chi\, |\nabla v_n|^{p-2}\nabla v_n\cdot\nabla(v_n-v)   \, dx =- \int_{\om^{0}}(v_n-v)|\nabla v_n|^{p-2}\nabla v_n \cdot \nabla \chi \, dx \\
&\qquad\qquad + \la_n \int_{\om^{0}} (v_n^+)^{p-1} (v_n  -v) \chi +o(1)	=o(1). 
\end{align*}
Moreover, since $v_n\rightharpoonup v$,
$$
\int_{\om^{0}} \chi\, |\nabla v|^{p-2}\nabla v\cdot\nabla(v_n-v)  \, dx=o(1).
$$
If we subtract the two above identities, we get 
$$
\int_{\om^{0}}\chi\, \left(|\nabla v_n|^{p-2}\nabla v_n-|\nabla v|^{p-2}\nabla v\right) \nabla(v_n-v)  \, dx=o(1).
$$ 
and using  the inequalities of Proposition \ref{S^+}
$$
 \int_{\om^0}\chi\,  |\na v_n- \nabla v|^{p}\, dx =o(1),
$$
for any $0\leq \chi \in C_c^1 (\om^0)$. 
Consequently,  
$v_n\to v$ strongly in $W_{loc}^{1,p}(\om^{0})$.
Coming back to \eqref{rr:2} and letting $n\to \infty$, we get \eqref{rest}.
\medskip

\noindent {\bf Claim 2} . {\it
$\lim_{n\rightarrow \infty}\int_\Om m(x)\frac{f(u_n^+)u_n^+}{\|u_n\|^p}\, dx =1$}.\\
Taking into account that $v\equiv 0$ a.e. in $\Om$, it follows from $(PS_2) $ applied to $\psi=u_n$ that  
$$
\int_\Om m(x)\frac{f(u_n^+)u_n^+}{\|u_n\|^p}\, dx 
=1 +o(1).
$$

\medskip

\noindent {\bf Claim 3} . {\it $\limsup_{n\rightarrow \infty}\int_\Om m^-(x)\frac{f(u_n^+)u_n^+}{\|u_n\|^p}\, dx <+\infty.$}\\
To prove this claim we will make use of hypothesis {\rm \ref{m2}}. It comes from this hypothesis that there exists $0\leq \psi\in C^1(\Om)$ such that $\psi=1$ on $\Om^-$ and 
$\psi=0$ on $\Om^+$. Taking $\phi=u_n \psi$ as a test function in $(PS_2)$ we get
\begin{align*}
&\int_\Om \left(\psi |\nabla v_n|^{p}+  v_n |\nabla v_n|^{p-2}\nabla v_n\cdot \nabla\psi \right)\, dx + \int_\Om m^-(x)\frac{f(u_n^+)u_n^+}{\|u_n\|^p}\, dx\\ &\qquad\qquad\qquad\qquad =\la_n \int_\Om (v_n^+)^p \psi \, dx +o(1)      
\end{align*}
from which it comes directly that the sequence $\left\{ \int_\Om m^-(x)\frac{f(u_n^+)u_n^+}{\|u_n\|^p}\, dx \right\}_{n\in \N} $ is bounded and the claim 3 is proved. 

\medskip
In order to achieve a contradiction,  we  use 
$(PS_1)$ and $(PS_2)$.
Multiplying by $1/p$ the inequality $(PS_2)$ applied to $u_n$  and adding $(PS_1)$ 
\begin{align*}
\int_\Om m(x)\bigg[\frac{1}{p}f(u_n^+)u_n^+- F(u_n^+)\bigg]\, dx 
&\le C+ \e_n\|u_n\|.
\end{align*} 
Consequently
\begin{align}\label{lim:1:J2:0}
\limsup_{n\rightarrow \infty}\int_\Om m(x)\bigg[\frac{\frac{1}{p}f(u_n^+)u_n^+- F(u_n^+)}{\|u_n\|^p}\bigg]\, dx\leq 0.
\end{align}
Using Theorem \ref{usfF} we have 
\begin{equation}
\label{F-sf}
\lim_{s\to +\infty}\, \frac{\frac{1}{p}sf(s)-F(s)}{sf(s)}=\frac{1}{p}-\frac{1}{q+1}=:\s>0.
\end{equation}
Fix $\e>0$.   Using \eqref{F-sf}, there exists $s_0>0$ such that 
$$(\sigma-\e)sf(s)\leq \frac{1}{p}sf(s)-F(s)\leq (\sigma+\e)sf(s)
\quad \forall s\geq s_0.$$
Splitting $\Omega$ into  $\{x\in\Omega : 0\leq u_n^+(x)\leq s_0\}$ and its complementary,  we conclude that there exists 
$C_\e\geq 0$ such that    for all $n$
\begin{align*}
-C_\e+\left(\s -\e\right)\int_\Om m^\pm(x)\,f(u_n^+)u_n^+\, dx
& \le
\int_\Om m^\pm(x)\left[\frac{1}{p}f(u_n^+)u_n^+- F(u_n^+)\right]\, dx\\  
&\le C_\e + \left(\s +\e\right)
\int_\Om m^\pm(x)\,f(u_n^+)u_n^+\, dx .
\end{align*}
Consequently, 
\begin{align*}
&\int_\Om\big(m^+(x)-m^-(x)\big)\left[\frac{1}{p}f(u_n^+)u_n^+- F(u_n^+)\right]\, dx\\
&\qquad\qquad \ge
-2C_\e +\left(\s -\e\right)\int_\Om \big(m^+(x)-m^-(x)\big)\,f(u_n^+)u_n^+\, dx\\
&\qquad\qquad\qquad -2\e\int_\Om m^-(x)\,f(u_n^+)u_n^+\, dx\,,
\end{align*}
and then
\begin{align*}
(\sigma-\e)\int_\Om m(x)\,f(u_n^+)u_n^+\, dx 
&\le 2C_\e+ \int_\Om m(x)\,\left[\frac{1}{p}f(u_n^+)u_n^+- F(u_n^+)\right]\, dx
\\
&\qquad 
+2\e\int_\Om m^-(x)\,f(u_n^+)u_n^+\, dx\,,
\end{align*}
so, using \eqref{lim:1:J2:0}
\begin{align*}
\lim_{n\rightarrow \infty}\int_\Om m(x)\,\frac{f(u_n^+)u_n^+}{\|u_n\|^p}\, dx
\le \frac{2\e}{\sigma-\e}\,\limsup_{n\rightarrow \infty}
\int_\Om m^-(x)\frac{f(u_n^+)u_n^+}{\|u_n\|^p}\, dx\,.
\end{align*}
Using  claim 3  and letting $\e \to 0$, we get a contradiction with claim 2
proving that any  Palais-Smale sequence   (PS)$_{\lambda_n}$ is bounded.  
\end{proof}

\begin{lem}\label{lem:conv}
Let $f\in C(0,+\infty)$ satisfy  
{\rm \ref{f3}}.
Let  $\{u_n\}$ be a sequence in $W_0^{1,p}(\Om)$, satisfying  $\|u_n\|\to +\infty$. 
Let $v$ be the weak limit of $v_n:=\frac{u_n}{\|u_n\|}$ in $W_0^{1,p}(\Om) $ converging to $v$ a.e.

Then  
\begin{equation}\label{cio}
\frac{f(u_n^+)}{f(\|u_n\|)} \to (v^+)^q, \qq{in}   L^1 (\Om).
\end{equation}
Similarly, if there exists  $h\in L^1(\Om)$ such that $|v_n|^{p^*}\leq h $ a.e. for all $n$ large, then 
\begin{equation}\label{cio2}
\frac{F(u_n^+)}{F(\|u_n\|)} \to (v^+)^{q+1}, \qq{in} L^1(\Om). 
\end{equation}
\end{lem}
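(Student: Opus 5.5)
The plan is to combine a.e. convergence with uniform integrability and conclude with the Lebesgue--Vitali convergence theorem. Set $s_n:=\|u_n\|\to+\infty$ and $\tau_n(x):=v_n^+(x)$, so that $u_n^+=\tau_n s_n$. Since $f>0$ at infinity, $f(s_n)>0$ for $n$ large, and the quotient in \eqref{cio} is
\[
\frac{f(\tau_n(x)s_n)}{f(s_n)} .
\]

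\textbf{Pointwise limit.} Fix $x$ with $v_n(x)\to v(x)$. Then $\tau_n(x)\to v^+(x)$, so $\tau_n(x)\in[0,b]$ for some $b=b(x)$ and all $n$.
\begin{itemize}
\item Points with $\tau_n(x)=0$ contribute $f(0)/f(s_n)=0$.
\item Since $f$ is continuous on $[0,+\infty)$, it is bounded on $[0,s_0]$. As $q>p-1>0$, Theorem \ref{usfF}(iii) says the limit \eqref{reg:var} is uniform in $\tau\in(0,b]$. Hence
\[
\Bigl|\frac{f(\tau_n(x)s_n)}{f(s_n)}-\tau_n(x)^q\Bigr|\to0 .
\]
\item Since $\tau_n(x)^q\to (v^+(x))^q$, the quotient converges a.e. to $(v^+)^q$. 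This also covers points where $v(x)=0$, which is the delicate case: uniformity on $(0,b]$ rather than only on compact subsets of $(0,\infty)$ is exactly what handles it.
\end{itemize}

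\textbf{Domination and uniform integrability.} By \eqref{bf} with $s=s_n$ large and $\tau=\tau_n(x)$,
\[
\frac{f(u_n^+)}{f(s_n)}\le c_2\bigl(1+(v_n^+)^{p^*-1}\bigr).
\]
For the lower bound, $f$ is bounded below on $[0,\infty)$ because it is continuous and positive at infinity. Hence the quotient is $\ge -C/f(s_n)\ge -C'$, using that $f(s_n)\to\infty$ by Corollary \ref{cor:superlinear}. Since $\{v_n\}$ is bounded in $W^{1,p}_0(\Om)\subset L^{p^*}(\Om)$, the family $(v_n^+)^{p^*-1}$ is bounded in $L^{p^*/(p^*-1)}(\Om)$ with exponent $>1$. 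It is therefore uniformly integrable on the bounded set $\Om$, and so are the quotients. Vitali's theorem then gives \eqref{cio}. If an $h$ as in the second statement is available, plain dominated convergence suffices.

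\textbf{The primitive $F$.} Theorem \ref{usfF}(i),(iii) shows that $F$ is regularly varying of index $q+1>0$ with the limit uniform on $(0,b]$, so the pointwise argument above gives a.e. convergence to $(v^+)^{q+1}$. For the bound, write $F(\tau s)=s\int_0^\tau f(\sigma s)\,d\sigma$ and apply \eqref{bf} under the integral for $s$ large:
\[
F(\tau s)\le c_2\, s f(s)\Bigl(\tau+\tfrac{\tau^{p^*}}{p^*}\Bigr).
\]
By \eqref{eq:usfF}, $s f(s)\le C F(s)$ for large $s$. Hence
\[
\frac{F(u_n^+)}{F(s_n)}\le C\bigl(v_n^++(v_n^+)^{p^*}\bigr)\le C(|v_n|+h).
\]
A lower bound $-C'$ follows because $F$ is bounded below, by the same reasoning as for $f$. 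Then $|v_n|\to|v|$ in $L^1$ by compactness of the embedding into $L^p$, and $h\in L^1$, so the family is uniformly integrable and Vitali yields \eqref{cio2}.

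\textbf{Main obstacle.} The hard part is the pointwise convergence at points where $v=0$ (that is, $\tau_n\to0$), together with the uniformity of \eqref{bf} in $\tau$. The first is handled by invoking Theorem \ref{usfF}(iii), which requires $q>0$ and boundedness of $f$ near $0$; both hold here. The second is exactly what hypothesis \eqref{bf} provides, and it is what produces the integrable majorants.
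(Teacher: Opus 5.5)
Your proof is correct. Its skeleton is the paper's (a.e. limit, an integrable majorant from \eqref{bf}, then a convergence theorem), but you carry out the two key steps differently.

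\textbf{Pointwise limit.} The paper does not quote Theorem \ref{usfF}(iii). It proves a sequential claim instead: if $\tau_n\to\tau_0\ge0$ and $s_n\to+\infty$, then $f(\tau_ns_n)/f(s_n)\to\tau_0^q$. The proof goes by cases. For $\tau_0>0$ it uses uniform convergence on compacts, and the case $\tau_0=0$ with $\tau_ns_n$ bounded is trivial. For $\tau_0=0$ with $\tau_ns_n\to\infty$, it uses the Karamata representation to get Potter-type bounds $\frac12\tau_n^{q+\e}\le f(\tau_ns_n)/f(s_n)\le 2\tau_n^{q-\e}$. Your single appeal to uniformity on $(0,b]$ packages exactly this reasoning and is shorter. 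It is legitimate here because $q>0$ and $f$ is continuous up to $0$. The paper's version is self-contained, and it is what the paper later cites in Lemma \ref{g0}.

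\textbf{Passage to $L^1$.} The paper uses strong convergence of $v_n$ in $L^{p^*-1}$, extracts a subsequence with an $L^1$ majorant of $|v_n|^{p^*-1}$, and applies dominated convergence. You instead note that $(v_n^+)^{p^*-1}$ is bounded in $L^{p^*/(p^*-1)}$, hence uniformly integrable, and apply Vitali. This needs neither the compact embedding nor a subsequence.

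\textbf{Points you make explicit.} The paper states its estimate for the absolute value of the quotient, but \eqref{bf} only bounds it from above. You supply the missing lower bound from $f$ being bounded below and $f(s_n)\to+\infty$. You also build the majorant for $F(u_n^+)/F(s_n)$ from $F(\tau s)=s\int_0^\tau f(\sigma s)\,d\sigma$ and \eqref{eq:usfF}, where the paper only says the proof ``runs similarly''.

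Like the paper, you read \eqref{bf} with a threshold in $s$ that does not depend on $\tau$. That reading is what lets you substitute $\tau=\tau_n(x)$.
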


\begin{proof}
Let us first prove the following claim: \\

{\em Claim }. {\it Let $\{\tau_n\}$ be a sequence of non-negative real numbers converging to some  $\tau_0\geq 0$ and let $\{s_n\}$ be a sequence of positive real numbers converging to $+\infty$.
Then 
$$
\lim_{n\to\infty}\frac{f(\tau_n s_n)}{f(s_n)} = \tau_0^q.
$$
} 

\medskip

The case $\tau_0>0$ follows from the representation Theorem \ref{rrvf} and from the uniform convergence result quoted in Theorem \ref{usfF}. We have
$$
\frac{f(\tau_n s_n)}{f(s_n)}=\tau_n^{q} \, \frac{L(\tau_n s_n)}{L(s_n)}\q{and} \lim_{n\to\infty} \frac{L(s_n\tau)}{L(s_n)}=1 \,,
$$
uniformly for any $\tau\in[a,b],$  where $L$ is defined in \eqref{f:q:L}--\eqref{L}.

\medskip

Observe that in the  case $\tau_0=0$ and  $\{\tau_n s_n\}_n$ converging  to some $c\in \R^+$ one have trivially the claim.
Assume $\tau_0=0$ and  $\tau_n s_n\to\infty.$
We use the integral representation of Theorem \ref{rrvf}.
Indeed, since $\displaystyle\lim_{s\to+\infty}\rho(s)=0,$ for any $\e >0$,  there exists  $t_\e>s_0$ such that 
\begin{equation*}
-\e \ln s
\leq \int_{t_\e}^{t_\e s}   \frac{\rho (t)}{t} \,dt 
\leq \e \ln  s
\qquad   \forall s\geq 1,
\end{equation*}
so, for  all $n$ large  enough
\begin{equation}\label{eps}
\tau_n^{\e}\le {\rm exp}\,\left(-\int_{\tau_n s_n}^{s_n} \frac{\rho(t)}{t} dt\right)\le \tau_n^{-\e}.
\end{equation}
Moreover
$$
\frac{f(\tau_n s_n)}{f(s_n)}=\tau_n^{q} \,\frac{L(\tau_n s_n)}{L(s_n)}=\tau_n^{q} \,\frac{\zeta(\tau_n s_n)}{\zeta(s_n)}{\rm exp}\,\left(-\int_{\tau_n s_n}^{s_n} \frac{\rho(t)}{t} dt\right).
$$
Fix  $\e>0$ small enough. 
Recalling that $\zeta$ is positive, continuous and have a strictly positive  finite limit at $+\infty$,  and since $\lim_{n\to\infty}s_n=\lim_{n\to\infty}s_n\tau_n=+\infty $ then for  all $n$ large  enough, 
\begin{equation}\label{bd:zeta}
\frac{1}{2}\leq \frac{\zeta(\tau_n s_n)}{\zeta(s_n)}\leq 2.
\end{equation}
Combining now \eqref{eps}-\eqref{bd:zeta}, for  all $n$ large  enough  we deduce
\begin{equation}\label{bd:fn}
\frac{1}{2}
\tau_n^{q+\e}\, 
\leq \frac{f(\tau_n s_n)}{f(s_n)}
\leq 
2
\tau_n^{q-\e} . 
\end{equation}
The claim  is proved in this  case letting $n\to+\infty$.\\

We now prove \eqref{cio}. 
Let $v_n=\frac{u_n}{\|u_n\|}$, then $v_n\rightharpoonup v$, strongly in $L^{p^*-1}(\Om)$. 
Since $\{v_n\}$ converges to $v$ strongly in $L^{p^*-1}(\Om)$,
there exists $h\in L^1(\Om)$ such that,  up to a subsequence, 
$
\forall n\in\N,\quad |v_n|^{p^*-1}\leq h \quad  \hbox{a.e.} 
$
and also $v_n\to v$ a.e. Using {\rm \ref{f3}}, and  that $u_n^+=v_n^+\|u_n\|$,  we have a.e. pointwise
$$
\left|\frac{f(u_n^+)}{f(\|u_n\|)}\right|\leq C( 1+|v_n^+|^{p^*-1} )\leq C(1+h)
.$$
Then the conclusion \eqref{cio} follows from the Lebesgue convergence theorem.

The  proof of \eqref{cio2} runs similarly.
\end{proof}
\medskip
\begin{thm}\label{th:comp:PS} 
Assume that	$f$  satisfies hypotheses  {\rm \ref{f1}}--{\rm \ref{f3}}, that $m \in C(\Omb)$,
satisfies hypothesis {\rm \ref{m0}}--{\rm \ref{m2}}. 
Let $\{\lambda_n\}$ be a sequence converging to  $\la.$ Let $\{u_n\}_{n\in \N} $ be a bounded {\rm (PS)$_{\lambda_n}$} sequence in $W_0^{1,p}(\Om)$.
Then there exists $u\in W_0^{1,p}(\Om)$ and  a subsequence, still denoted by $\{u_n\}_{n\in \N}$, such that
$u_n\to u $ in $ W_0^{1,p}(\Om).$
\end{thm}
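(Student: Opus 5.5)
Since $\{u_n\}$ is bounded in $W_0^{1,p}(\Om)$, reflexivity and Rellich--Kondrachov give a subsequence with $u_n\wto u$ weakly in $W_0^{1,p}(\Om)$, $u_n\to u$ strongly in $L^r(\Om)$ for every $r<p^*$, and $u_n\to u$ a.e. The plan is the standard $(S_+)$ argument. We test $(PS_2)$ with $\psi=u_n-u$, which is bounded in $W_0^{1,p}(\Om)$, so that
\[
\int_\Om |\na u_n|^{p-2}\na u_n\cdot\na(u_n-u)\,dx=\la_n\int_\Om (u_n^+)^{p-1}(u_n-u)\,dx+\int_\Om m(x)f(u_n^+)(u_n-u)\,dx+o(1).
\]
The term with $\la_n$ tends to zero by Hölder's inequality and strong $L^p$ convergence, since $\la_n$ is bounded. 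By weak convergence, $\int_\Om|\na u|^{p-2}\na u\cdot\na(u_n-u)\,dx\to0$. Subtracting these and using the inequalities of Proposition~\ref{S^+}, together with the $(S_+)$ property of $-\De_p$, we would conclude $u_n\to u$ strongly, provided the nonlinear term vanishes.

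The main obstacle is therefore to show
\[
\int_\Om m(x)f(u_n^+)(u_n-u)\,dx\to0 .
\]
Here Rellich--Kondrachov does not apply directly, because $f$ may grow almost like $s^{p^*-1}$. We would instead use the Lebesgue--Vitali theorem, i.e.\ uniform integrability, in the spirit of Proposition~\ref{pro:compG}. By \ref{f1} and continuity of $f$, for every $\e>0$ there is $C_\e$ with $|f(s)|\le \e s^{p^*-1}+C_\e$ for $s\ge0$. Then Hölder's and Sobolev's inequalities give
\[
\Big|\int_\Om m\,f(u_n^+)(u_n-u)\,dx\Big|\le \|m\|_\infty\Big(\e\,\|u_n^+\|_{p^*}^{p^*-1}\|u_n-u\|_{p^*}+C_\e\|u_n-u\|_1\Big)\le \e\, C\sup_n\|u_n\|^{p^*}+C_\e\|u_n-u\|_1 .
\]
Letting $n\to\infty$ and then $\e\to0$ kills the term. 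The vanishing growth ratio in \ref{f1} is exactly what replaces a subcritical power bound. Equivalently, the family $\{f(u_n^+)(u_n-u)\}$ is uniformly integrable on sets of small measure and converges a.e.\ to $0$, so Vitali applies.

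To finish, we would get
\[
\int_\Om\big(|\na u_n|^{p-2}\na u_n-|\na u|^{p-2}\na u\big)\cdot\na(u_n-u)\,dx\to0 .
\]
For $p\ge2$ this quantity bounds $c\|u_n-u\|^p$. For $1<p<2$, the inequality of Proposition~\ref{S^+} combined with Hölder's inequality and the boundedness of $\{u_n\}$ gives $\|u_n-u\|\to0$. Neither the sign-changing weight nor \ref{f3} plays a role beyond $m\in L^\infty$; only \ref{f1} and boundedness matter.
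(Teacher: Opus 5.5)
Your proof is correct and follows the paper's route: you test $(PS_2)$ with $u_n-u$, show $\int_\Omega m(x)f(u_n^+)(u_n-u)\,dx\to0$ using only \ref{f1}, and conclude with the vector inequalities behind the $S^+$ property. The only difference is minor. You dispose of the nonlinear term directly via $|f(s)|\le\varepsilon s^{p^*-1}+C_\varepsilon$, H\"older's inequality and strong $L^1$ convergence, while the paper uses the same splitting to prove uniform absolute continuity of $f(u_n^+)|u_n-u|$ and then applies the Lebesgue--Vitali theorem, so your version is slightly more elementary.
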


\begin{rem}\rm
In the case $\la< \la_1(\om^0),$ it follows 
from Proposition \ref{pro:PS}  that any {\rm (PS)$_{\lambda_n}$} sequence $\{u_n\}_{n\in \N} $ is a bounded {\rm (PS)$_{\lambda_n}$} sequence. 
\end{rem}

\begin{proof}[Proof of Theorem \ref{th:comp:PS}]
Since $\{u_n\}_{n\in \N} $ is  bounded there exists a subsequence,  denoted by  $\{u_{n}\}_{n\in \N}$\,, and some $u\in W_0^{1,p}(\Om)$ such that 
\begin{align*}
& u_{n} \rightharpoonup u\ \ \text{weakly in}\ W_0^{1,p}(\Om),  \text{ strongly in } L^p (\Om)\\ 
&u_n\to u \ \text{ a.e.}
\end{align*}
By testing $(PS_2)$ against $\psi=u_n-u$, we can write 
\begin{align}\label{ror}
&\Bigg|\int_{\Om} |\na u_n|^{p-2} \na u_n\cdot (\na u_n-\na u)\,\, dx  -
\la_n\int_{\Om}(u_n^+)^{p-1}(u_n-u) \, dx\nonumber\\ &\qquad\qquad\qquad-\int_{\Om}m(x)\,f(u_n^+)\,(u_n-u)\, dx\Bigg|\le \e_n\| (u_n-u)\|.
\end{align}

\noindent {\bf Claim} . 
$\int_\Om |f(u_n^+)| |u_n-u|\, dx =o(1)$.

\medskip

 To prove the claim we use Lebesgue-Vitali convergence theorem, see \cite[Theorem 4.5.4, p. 268]{Bogachev}. An alternative proof can be done using Orlicz spaces when $f$ is increasing, see \cite[Proposition 3.1 and Theorem A.2]{Cuesta_Pardo_MN_2024}. 
 
Consider the sequence 
$$
h_n:=f(u_n^+)|u_n-u|, 
$$
which belongs to $L^1(\Om)$.  Since $|u_n -u|\to 0$ and $f(u_n^+)\to f(u^+)$ a.e., as  $f$ is continuous, then, $h_n\to 0$ a.e. and so in measure, see \cite[Theorem 2.2.3, p.111]{Bogachev}. In order to  prove that $h_n\to 0$ in $ L^1(\Om)$, we apply Lebesgue-Vitali convergence theorem 
(c.f \cite[Theorem 4.5.4, p. 268]{Bogachev}. It is then enough to prove that the sequence $\{h_n\}_n$ is {\it uniformly absolutely continuous}, that is,
\begin{equation}\label{uniint}\forall\eps>0\  \exists C_\e>0\text{ s.t.  }   \int_\omega|h_n|\, dx <\eps \quad \forall n\in\N, \; \forall \omega \subset \Omega, \;  |\omega|<C_\eps.\end{equation}
Let  $D:=\|u\|+\sup_n \|u_n\|<+\infty$ 
 and fix $\eps>0$.
By  {\rm \ref{f1}} there exists $C>0$ such that 
\begin{equation}\label{1}
f(s)\leq C(1+s^{p^*-1})\quad \hbox{ for all } s\geq 0 .\end{equation}
and  there exists $t_1=t_1(\eps)>0$  such that 
\begin{equation}
    \label{2}
    0\leq f(t)\leq \eps t^{p^*-1} \quad \hbox{ for all } t\geq t_1.\end{equation}
For any $n\in \N$ and  for any $\om\subset \Om$ we have  
\begin{align*}
\int_{\om}|h_n|\, dx &\le 
\int_{\om} |f(u_n^+)|\big(|u_n|+|u|\big)\, dx =
\int_{ \{ u_n\geq t_1\}\cap\om} |f(u_n^+)|\big(|u_n|+|u|\big)\, dx \\
&\qquad + \int_{ \{ u_n<t_1\}\cap\om} |f(u_n^+)|\big(|u_n|+|u|\big)\, dx=:I_1+I_2;
\end{align*}
and the estimates 
$$
\begin{array}{lll}
I_1&\leq \eps \int_\Om (u_n^+)^{p^*-1}(|u_n|+|u|) \, dx \leq C_1 \eps&\quad \hbox{by } \eqref{2} , \\
\\
I_2& \leq C(t_1^{p^*-1} +1)\int_{\om} (|u_n| + |u|)\, dx  &\quad \hbox{by } \eqref{1} ,\\[.3cm]
&\leq C(t_1^{p^*-1} +1) (\|u_n\|_{p^*}+\|u\|_{p^*})\,  |\om|^{1-1/p^*}  &\quad (\hbox{using Holder's ineq.})\\[.3cm]
&=: C_2(\eps)\,  |\om|^{1-1/p^*},
\end{array}
$$
where $C_1$   is a positive constant depending only  on  $D$  and $|\Om|$, 
 $C$ is a positive constant depending only  on $f$,
and  $C_2(\eps) $ is  a  positive constant depending on $C$, $|\Omega|$, $D$ and $t_1=t_1(\eps)$.
Thus,   if  $|\om|\leq  C_\eps:=\left(\frac{\eps}{C_2(\eps)}\right)^{\frac{p^*}{p^*-1}}$  then $$\int_\om |h_n|dx\leq (C_1 +1)\eps$$ and claim  \eqref{uniint} is proved.

\medskip
Now, we observe that since $u_n\wto u$  in $W_0^{1,p}(\Om)$,  
$$
\int_{\Om} |\na u|^{p-2} \na u\cdot (\na u_n-\na u)\,\, dx\to 0\qq{as}n\to\infty.
$$
and then,  coming back to \eqref{ror}, using the claim and Proposition \ref{S^+} we get that $u_n\to u$ strongly in $W_0^{1,p}(\Om)$.
\end{proof}

\subsection{An existence result for \texorpdfstring{$\la\leq \la_1$}{}}

The next theorem provides a solution to   \eqref{eq:ell:pb}$_\la$  for $\la\leq \la_1$ and $f$ slightly subcritical, by using the well known  Mountain--Pass Theorem when $\la<\la_1$, and a subtle study on the nature of local minimum due to de Figueiredo when $\la=\la_1$, see \cite[Theorem 5.10]{deFigueiredo}. 

\begin{thm}\label{th:la<la1}
Assume  that the nonlinearity $f$  satisfies  {\rm \ref{f1}}--{\rm \ref{f3}}, and also  {\rm \ref{H5}}-{\rm \ref{H6}}. Assume that the weight $m \in C(\Omb)$  
satisfies  hypothesis {\rm \ref{m0}}--{\rm \ref{m2}}.

Then,  the boundary value problem   \eqref{eq:ell:pb}$_\la$  has at least one   positive solution  $u\in C^{1,\mu}(\overline{\Om})$ for any $\la\leq \la_1$.
\end{thm}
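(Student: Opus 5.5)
We apply the Mountain--Pass Theorem to $J_\la$ from \eqref{def:J:lambda}. Since $\la\le\la_1<\la_1(\om^0)$, Proposition \ref{pro:PS} and Theorem \ref{th:comp:PS} (with the constant sequence $\la_n\equiv\la$) show that every (PS) sequence is bounded and has a strongly convergent subsequence. Hence $J_\la$ satisfies the Palais--Smale condition at every level, and it remains to verify the mountain-pass geometry. A nontrivial critical point is then a positive solution by Remark \ref{sol:+}, and it lies in $C^{1,\mu}(\overline{\Om})$ by Remark \ref{reguC1}(ii).

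\emph{Geometry at infinity.} Take $\phi\in C_c^\infty(\Om^+)$ with $\phi\ge0$, $\phi\not\equiv0$ (possible by \ref{m0}). By Corollary \ref{cor:superlinear} and \eqref{eq:usfF}, $F(s)/s^p\to+\infty$ as $s\to\infty$. Write
\[
\frac{1}{t^p}\int_{\Om} m F(t\phi)\,dx=\int_{\Om^+} m\,\frac{F(t\phi)}{(t\phi)^p}\,\phi^p\,dx .
\]
Since $F$ is bounded below on compacts, Fatou's lemma shows this tends to $+\infty$. Therefore $J_\la[t\phi]\to-\infty$ as $t\to\infty$, which gives a point $e$ with $\|e\|$ large and $J_\la[e]<0$.

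\emph{Geometry near $0$.} This is the main obstacle. For $\la<\la_1$ we use \ref{f1} and \ref{H5}: for every $\e>0$ there is $C_\e$ with $|F(s)|\le \e s^p+C_\e s^{p^*}$. Together with the Poincar\'e inequality this gives
\[
J_\la[u]\ge \frac1p\Big(1-\frac{\la^+}{\la_1}\Big)\|u\|^p-\e C\|u\|^p-C\|u\|^{p^*}.
\]
Hence $J_\la\ge\alpha>0$ on a small sphere $\|u\|=r$, and the Mountain--Pass Theorem yields a critical point at a level $c\ge\alpha>0$, so it is nontrivial.

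For $\la=\la_1$ the quadratic part degenerates along $\vf_1$, and we need \ref{H6}. We claim that $0$ is a strict local minimum of $J_{\la_1}$, in the spirit of de Figueiredo \cite[Theorem 5.10]{deFigueiredo}. Decompose $u$ near $0$ as $u=t\vf_1+w$, where $w$ is "transverse" to $\vf_1$ (via a spectral-gap type inequality for $\int|\na u|^p-\la_1\int (u^+)^p$ on a cone around $\vf_1$). Along $t\vf_1$ we have $J_{\la_1}[t\vf_1]=-\int mF(t\vf_1)$. By \eqref{r} and the regular variation of $F$ at $0$ with index $\eta+1$, this behaves like
\[
-F(t)\,\frac{I_0}{\eta+1}(1+o(1))>0
\]
by \eqref{a:vf1:<0}. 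The perturbation $w$ is controlled by the gap term.

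If establishing strict local minimality directly proves delicate, an alternative is to argue by contradiction. Suppose $u_k\to0$ with $J_{\la_1}[u_k]\le0$. Normalize $u_k/\|u_k\|$; the inequality $\int|\na u_k|^p-\la_1\int (u_k^+)^p\le p\int mF(u_k^+)=o(\|u_k\|^p)$ (using \ref{H5}) forces the normalized sequence to converge strongly to $\vf_1/\|\vf_1\|$. Then we use \eqref{r} uniformly to get $\int mF(u_k^+)\sim I_0(\cdots)<0$, which contradicts $J_{\la_1}[u_k]\le0$ since the gradient part is nonnegative.

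Once $0$ is a strict local minimum, we still need a positive lower bound on a sphere. Since (PS) holds, de Figueiredo's result gives: either $\inf_{\|u\|=r}J_{\la_1}>0$ for small $r$, or there are infinitely many critical points near $0$, which are positive solutions anyway. In the first case the mountain pass gives a nontrivial critical point.
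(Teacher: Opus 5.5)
Your proposal is correct and follows the paper's proof essentially step by step. You obtain (PS) from Proposition~\ref{pro:PS} and Theorem~\ref{th:comp:PS}, show negativity of $J_\la$ along a ray supported in $\Om^+$, and use the estimate $|F(s)|\le \e s^p+C_\e s^{p^*}$ on a small sphere when $\la<\la_1$. For $\la=\la_1$, your fallback is exactly the paper's contradiction argument (normalize, get $v_k\to c\vf_1$ strongly, then use \eqref{r} and $I_0<0$ to force $J_{\la_1}[u_k]>0$), followed by de Figueiredo's alternative \cite[Theorem 5.10]{deFigueiredo}; the cone-decomposition sketch can therefore be dropped.

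The step you state loosely, ``use \eqref{r} uniformly'', is what the paper isolates as Lemma~\ref{g0}(ii). It needs pointwise convergence of $F(\|u_k\|v_k^+)/F(\|u_k\|)$ to $(c\vf_1)^{\eta+1}$ together with an integrable majorant, because $v_k^+$ is not uniformly bounded. Your Fatou argument at infinity and your use of $\la^+$ on the small sphere are harmless and slightly cleaner variants of the paper's computations.
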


\begin{proof}
We  verify the  hypothesis of the Mountain--Pass Theorem, see 
\cite[Theorem 2.1]{Ambrosetti_Rabinowitz}. We recall that  the  functional $J_\la:W_0^{1,p}(\Om)\to \R$ is $C^1$; also the \textrm{ (PS)$_{\lambda}$} condition is satisfied, see Proposition \ref{pro:PS} since $\lambda_1<\lambda_1\big(\om^{0}\big)$.
Clearly $J_\la[0]=0$.\\

\noindent {\bf Claim 1}.  There exists $v_0\in W_0^{1,p}(\Om)$ such that $J_\la [v_0]<0$. 

\medskip

To prove this claim,  fix some element $0\le u_0\leq 1$ in $ W_{0}^{1,p}(\Om)\cap C(\overline{\Omega})$, $ u_0\ge 0$, and $u\ne 0$ in $\Om^+$, and $u_0\equiv 0$ in $\Om^-$.

Using that $m(x)\,F\big(tu_0(x)\big)=m^+(x)\,F\big(tu_0(x)\big)$, and that
$F$ is a function regularly varying at infinity of order $q+1$, see \eqref{reg:var:F} in Theorem \ref{usfF}, we deduce  the pointwise limit
$$
\lim_{t\to\infty} m(x)\,\frac{F\big(tu_0(x)\big)}{F(t)}=m^+(x)\,\big(u_0(x)\big)^{q+1},
$$
and from  \eqref{reg:var:F} we have for all $t\geq 0$ 
$$
\left|m\,\frac{F\big(tu_0\big)}{F(t)}\right|\leq C(1+|u_0|^{q+1}) \in L^1(\Om).$$
Corollary \ref{cor:superlinear}  and Lebesgue convergence Theorem imply that
\begin{align*}
 \lim_{t\to\infty} \int_{\Om} m(x)\,\frac{F(tu_0)}{t^p}&=\lim_{t\to\infty} \frac{F(t)}{t^p}\int_{\Om} m(x)\,\frac{F(tu_0)}{F(t)} \\
 &= \left(\int_{\Om} m^+(x)\,u_0^{q+1}\right)\lim_{t\to\infty}\frac{F(t)}{t^p}= 
+\infty .   
\end{align*}

Obviously, there exists a positive constant $C>0$ such that
\begin{align*}
J_\la[tu_0]
&= \frac{t^p}{p}\|u_0\|^p -t^p\frac{\la}{p}\|u_0\|_{L^p(\Om)}^p- \int_{\Om} m(x) F(tu_0)\\
&\le Ct^p  -\int_{\Om} m^+(x) F(tu_0)
\end{align*}
hence,
$$
\frac{J_\la[tu_0]}{t^p}\leq C - \int_{\Om} m^+(x)\,\frac{F(tu_0)}{t^p}\to -\infty,\qt{as}t\to +\infty,
$$\medskip 
so there exists $t_0>0$ such that $J_\la[t_0u_0]<0$. Finally, choose $v_0=t_0u_0$ to conclude.

\medskip 

Next we study the behaviour of $J_\la$ near 0
and we distinguish two cases.
\smallskip 

\noindent {\bf Case $\lambda<\lambda_1$}.  

\medskip
\noindent {\bf Claim 2.} 
There exists $D_1>0$ and there exists $r_0>0$  such that 
$$
J_\la[u]\geq D_1\quad \forall u\in W_0^{1,p}(\Om), \quad \|u\|=r_0.
$$
Let  $u\in W_0^{1,p}(\Om)$ with $\|u\|=r_0$, for  $r_0>0$ to be selected below. Then, 
\begin{equation}\label{J:11:lambda}
J_\la[u]=\frac{r_0^p}{p}-\frac{\la}{p}\int_{\Om} (u^+)^{p}\, dx-\int_{\Om}\,m(x)F(u^+) \,\, \, dx\,.
\end{equation}
From hypothesis  {\rm \ref{f1}} and {\rm \ref{H5}}  we have
$$
\left|\int_{\Om}\,m(x)F(u^+) \,\, \, dx\right|\le 
\e r_0^p+Cr_0^{p^*}.
$$
Fix $0<\e\le \frac{1}{2}\left( 1-\frac{\la}{\la_1} \right)$. In view of \eqref{J:11:lambda}, and thanks to the Poincaré inequality we get
\begin{equation}\label{def:D1}
J_\la[u]\ge \frac{1}{2p}\left( 1-\frac{\la}{\la_1} \right)\, r_0^p -Cr_0^{p^*}\ge C_1 r_0^p-Cr_0^{p^*}=:D_1 
\end{equation}
Since  $\lambda<\la_1$ and that   $p^*>p$, taking $\e$ small  enough   we get that $D_1>0$ by choosing  $r_0=r_0(\la,  p,p^*,f)>0$ small enough and such that $r_0< \|t_0u_0\| $.\\

Let us  denote
$$
\Gamma :=\big\{ \mathbf{g}\in C\big([0,1];W_{0}^{1,p}(\Om)\big) :   \,\mathbf{g} (0)=0,\ \mathbf{g}(1)=v_0\big\},
$$
 and 
$$
c:=\inf_{\mathbf{g}\in \Gamma } \max_{0\leq t\leq 1} J_\la[\mathbf{g}(t)]
$$
so $c\geq D_1>0$, where  $D_1 >0 $ is defined  in \eqref{def:D1}. Since all the hypothesis of the Mountain--Pass Theorem are accomplished, $c>0$ is  a critical value of 
$J_\la$
and we get a nontrivial solution of \eqref{eq:ell:pb}$_{\la}$ for $\la<\la_1$.

\medskip

\noindent {\bf   Case $\lambda=\lambda_1$}. 
\medskip 

\noindent {\bf Claim  3.} There exists $r_1>0$ such that 
$$J_{\la_1}[u]\geq  0,\quad \forall u\in W_0^{1,p}(\Om),\quad \|u\|\leq r_1.
$$

Assume by  contradiction that the claim is not true. Hence  there exists $\{u_n\}$ in $W_0^{1,p}(\Om)$, $0<\|u_n\|\to 0$ and $J_{\la_1}[u_n]< 0$. Take $v_n=\frac{u_n}{\|u_n\|} $ and let $v$ be such that $v_n \rightharpoonup v$ in $W_0^{1,p}(\Om)$, strongly in $L^{p^* -1}(\Om)$ and a.e..  On the one hand,   by dividing 
the inequality $J_{\la_1}[u_n] \leq 0$ by $\|u_n\|^p$, letting $n\to\infty$   and using  
 (i) of  Lemma \ref{g0} below, we get that 
\begin{align}\label{ineq}
\int_\Om |\nabla v^+|^p\, dx &\leq \int_\Om |\nabla v|^p \leq 1=\liminf_{n\to\infty} \int_\Om |\nabla v_n|^p\, dx   \nonumber\\
&\leq \liminf_{n\to\infty} \left( \la_1\int_\Om |v_n^+|^p\, dx +p\int_\Om m(x)\frac{F(u_n^+)}{\|u_n\|^p}\, dx\right)\\
&=\la_1\int_\Om |v^+|^p\, dx \nonumber
\end{align}
and therefore $0\le v^+=c\varphi_1$ for some $c>0$. Observe that $c=0$ gives a contradiction with the above inequality $1\le \la_1\int_\Om |v^+|^p\, dx$.  Note also that since $v^+=c\varphi_1>0$, then $v^-=0$. Since now we have $\int_\Om |\nabla v^+|^p\, dx =\la_1 \int_\Om |v^+|^p\, dx,$
it follows from \eqref{ineq} that there exists a subsequence, still denoted by $v_n$, such that
$$ 
\int_\Om |\nabla v|^p\, dx=\lim_{n\to\infty} \int_\Om |\nabla v_n|^p\, dx.
$$
Now, using \cite[Proposition 3.32]{Brezis_2011} and since $W_0^{1,p} (\Omega)$  is uniformly convex for $p>1$, see \cite[p. 96, 97]{Brezis_2011}, then $v_n\to v$ strongly in $W_0^{1,p}(\Om)$.

In particular, $v_n^+  \to v^+=v$ strongly in $W_0^{1,p}(\Om)$ as well.
In the other hand, by  using the variational characterisation of $\lambda_1$, {\rm \ref{H6}},  (ii) of Lemma \ref{g0} below and that $F(s)>0$ for $s>0$ small by {\rm \ref{H5}} we find 
$$
\liminf_{n\to \infty}\frac{J_{\la_1}[u_n]}{F(\|u_n\|)}\geq-\int_\Om m(x) (c\varphi_1)^{\eta+1}\, dx>0
$$
in contradiction with the fact that $J_{\la_1}[u_n]\leq 0$ for all $n$. 
We have just proved Claim 3, that is, that $0$ is a local minimum of $J_{\la_1}$.
\\

We now use the  \cite[Theorem 5.10]{deFigueiredo} on the nature of local minima. The following alternative holds: either (i)
there exists $0<r_0<r_1$   such that
$$D_1:=\inf\{ J_{\la_1}[u] : \|u\|=r_0\}>J_{\la_1}[0]=0$$
or (ii) for each  $0<\alpha<r_1$, the functional $J_{\la_1}$ has a local minimum at a point $u_\alpha\in W_0^{1,p}(\Om)$
with $\|u_\alpha\|=\alpha$  and $J_{\la_1}[u_\alpha]=J_{\la_1}[0]=0$.
 \medskip
 
If alternative (ii) holds we are done, as $u_\alpha$ is a non trivial critical point of $J_{\la_1}$. If alternative (i) prevails,    denoting $\Gamma $ and $c$ as above  for $\la=\la_1$,
we obtain that $c\geq D_1>0$, where  $D_1 >0 $ is now defined  in  alternative (i) when $\la=\la_1$. Since all the hypothesis of the Mountain--Pass Theorem are accomplished, $c>0$ is a critical value of 
$J_{\la_1}$
and we get a nontrivial solution of \eqref{eq:ell:pb}$_{\la_1}$.

Thus, we have proved  that for any $\la\le\la_1$, there exists $u\in W_0^{1,p} (\Om)$, $u\ge 0$, $u\ne 0$ such that $J'_\lambda[u]=0$ and so $u$ is a nontrivial weak solution to  \eqref{eq:ell:pb}$_\la$. 
By   the regularity results  and the  strong maximum principles  for quasilinear elliptic equations  we deduce that  $u> 0$  and  $u\in C^{1,\mu} (\overline{\Om})$, see Remark \ref{reguC1}.
\end{proof}

\medskip

\begin{lem}\label{g0}
Assume that $f\in C(\R^+)$, 
satisfies  {\rm \ref{f1}}, {\rm \ref{H5}}  and \eqref{r}.
Let $\{u_n\}$ be a sequence in $W_0^{1,p}(\Omega)$ such that $\|u_n\|\to 0$ and let  $v_n:=\frac{u_n}{\|u_n\|}$. Assume that there exists  $v\in W_0^{1,p}(\Om)$  such that, up to a subsequence $v_n\wto v$  in $W_0^{1,p}(\Om)$
and  a.e. in $\Omega$. Then 
\begin{enumerate}[label=\rm{(\roman*)}]
    \item $ \displaystyle\frac{F(u_n^+)}{\|u_n\|^p}\to 0 \quad \hbox{ in } 
 L^1(\Om)$; 
 \item if moreover $L^{\infty}(\Omega)\ni v>0$ a.e. in $\Om$, then 
$\displaystyle \frac{F(u_n^+)}{F(\|u_n\|)}\to v^{\eta+1} \hbox{ in } 
 L^1(\Om).$
\end{enumerate}
\end{lem}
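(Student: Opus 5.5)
Both parts rest on Vitali's convergence theorem, as in the proof of Theorem \ref{th:comp:PS}: pointwise a.e. convergence plus a domination (or uniform integrability) of the quotients. Throughout we pass to a subsequence. Since $v_n\wto v$ in $W_0^{1,p}(\Om)$, Rellich gives $v_n\to v$ strongly in $L^p(\Om)$ and in $L^{r}(\Om)$ for every $r<p^*$. The reverse Lebesgue theorem \cite[Theorem 4.9]{Brezis_2011} then yields $h\in L^1(\Om)$ with $|v_n|^{p}\le h$ a.e. Moreover $\sup_n\|v_n\|_{p^*}\le S$ by the Sobolev inequality, since $\|v_n\|=1$.

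\textbf{Part (i).} By {\rm \ref{f1}} and {\rm \ref{H5}}, for every $\e>0$ there is $C_\e$ with
\[
0\le F(s)\le \e s^p+C_\e s^{p^*}\qquad (s\ge 0),
\]
using that $F\ge0$ near $0$, and at infinity $F(s)=o(s^{p^*})$. Then
\[
\frac{F(u_n^+)}{\|u_n\|^p}\le \e (v_n^+)^p+C_\e\|u_n\|^{p^*-p}(v_n^+)^{p^*}.
\]
Integrating, the first term is at most $\e\sup_n\|v_n\|_p^p\le C\e$, and the second is at most $C_\e S^{p^*}\|u_n\|^{p^*-p}\to0$. Strictly we must control $|F|$ rather than $F$, since $f$ may be negative at intermediate values; but $|F(s)|\le C_0' s^p$ on compact sets because $f(s)+C_0s^{p-1}>0$, so the same splitting works. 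Hence the $L^1$ norm tends to $0$ as $\e\to0$.

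\textbf{Part (ii).} First, \eqref{r} transfers from $f$ to $F$. The argument is Theorem \ref{usfF} at zero: $F$ is regularly varying at $0$ of index $\eta+1$, i.e. $F(\tau s)/F(s)\to\tau^{\eta+1}$ as $s\to0^+$, since $F(\tau s)=\tau\int_0^s f(\tau t)\,dt$ and $f(\tau t)/f(t)\to\tau^\eta$ locally uniformly in $\tau>0$. Second, we get pointwise convergence. At a.e. $x$ we have $v_n(x)\to v(x)\in(0,\infty)$ and $u_n^+(x)=v_n^+(x)\|u_n\|$ with $\|u_n\|\to0$. The analogue at $0$ of the Claim in Lemma \ref{lem:conv}, using the Karamata representation at $0$ or the uniform convergence on compacts $[a,b]\subset(0,\infty)$, gives
\[
\frac{F(v_n^+(x)\|u_n\|)}{F(\|u_n\|)}\to v(x)^{\eta+1}.
\]
Note that $v$ positive and bounded keeps $\tau_n=v_n^+(x)$ in a compact subset of $(0,\infty)$ for $n$ large.

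\textbf{Main obstacle: uniform integrability.} The difficulty is that $v_n$ can be large on small sets, while $F(\|u_n\|)\approx\|u_n\|^{\eta+1}L(\|u_n\|)$ may be much smaller than $\|u_n\|^p$ because $\eta+1<p$ is not excluded. I would use a Potter-type bound from the representation at $0$: for $\de>0$ small there are $C,s_1$ such that
\[
\frac{F(\tau s)}{F(s)}\le C\max(\tau^{\eta+1-\de},\tau^{\eta+1+\de})\qquad\text{whenever } s,\tau s\le s_1.
\]
For $\tau s> s_1$ we bound $F(\tau s)\le C(\tau s)^{p^*}$ by {\rm \ref{f1}}. Since $p$-sublinearity gives $\eta+1\ge p$ (from $f(s)/s^{p-1}\to0$ and regular variation), we can choose $\de$ with $\eta+1+\de<p^*$. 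On the set $\{\tau s>s_1\}$ we use $\tau>s_1/s$, so that
\[
\frac{(\tau s)^{p^*}}{F(s)}\le \tau^{p^*}\frac{s^{p^*}}{F(s)}\le C\tau^{p^*}s^{p^*-\eta-1-\de}\to0\cdot\tau^{p^*}.
\]
The family $|v_n|^{p^*}$ is only bounded in $L^1$, not uniformly integrable, so this case needs care. On that set the bound reduces to $C(\tau s)^{p^*}/F(s)\le C\tau^{p^*}s^{p^*-\eta-1-\de}$, and $\int \tau^{p^*}\le S^{p^*}$ times a factor $s^{p^*-\eta-1-\de}\to0$, so the contribution vanishes in $L^1$. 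The remaining part is dominated by $C(1+|v_n|^{\eta+1+\de})$, which is uniformly integrable since $\eta+1+\de<p^*$ and $v_n$ is bounded in $L^{p^*}$. Vitali then concludes.
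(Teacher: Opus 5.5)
\textbf{Part (i).} This is correct and is the paper's argument. For the bound on $|F|$ you do not need $C_0$: its finiteness uses \ref{f2}, which the lemma does not assume. Continuity together with \ref{H5} already gives $|f(s)|\le C_K s^{p-1}$ on $[0,K]$.

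\textbf{The gap in (ii).} It is at ``we can choose $\de$ with $\eta+1+\de<p^*$''. Your reason, $\eta+1\ge p$, is a \emph{lower} bound. Nothing in \ref{f1}, \ref{H5} or \eqref{r} bounds $\eta$ from above; for instance $f(s)=s^\eta$ on $[0,1]$, $f\equiv 1$ on $[1,\infty)$ satisfies them for every $\eta>p-1$. Both places where you use this inequality fail without it:
\begin{itemize}
\item on $\{v_n^+\|u_n\|>s_1\}$ the factor $\|u_n\|^{p^*-\eta-1-\de}$ no longer tends to $0$;
\item on the complement, the majorant $(v_n^+)^{\eta+1+\de}$ is no longer uniformly integrable, since $v_n$ is only bounded in $L^{p^*}$.
\end{itemize}

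\textbf{The gap cannot be patched.} For the $f$ above with $\eta+1\ge p^*$, statement (ii) is false. Take $0\le\phi\in C_c^\infty(B_1)$ with $\phi\not\equiv0$, $x_0\in\Om$, and $r_n\to0$. Set $z_n=\vf_1+r_n^{-(N-p)/p}\phi((\cdot-x_0)/r_n)$ and $v_n=z_n/\|z_n\|$. Let $u_n=t_nv_n$, with $t_n\to0$ fast enough that $u_n\le 1$. Then:
\begin{itemize}
\item $v_n\wto v=c\vf_1$ weakly and a.e., with $v\in L^\infty$ and $v>0$, so all hypotheses of (ii) hold;
\item $F(u_n^+)/F(\|u_n\|)=v_n^{\eta+1}$ exactly;
\item $\int_\Om v_n^{\eta+1}\ge c' r_n^{N-(N-p)(\eta+1)/p}\int\phi^{\eta+1}$, which blows up if $\eta+1>p^*$;
\item if $\eta+1=p^*$, the bubble keeps a fixed amount of mass, so $\int v_n^{p^*}\not\to\int v^{p^*}$.
\end{itemize}
So (ii) requires the additional hypothesis $\eta+1<p^*$.

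\textbf{What holds under $\eta+1<p^*$.} Your argument is then complete:
\begin{itemize}
\item transfer of \eqref{r} from $f$ to $F$;
\item Potter bounds at $0$ on $\{v_n^+\|u_n\|\le s_1\}$;
\item decay of the complementary piece via $F(s)\ge cs^{\eta+1+\de}$ and the Sobolev bound;
\item Vitali's theorem.
\end{itemize}

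\textbf{Comparison with the paper.} Your route is genuinely sharper than the paper's. The paper invokes dominated convergence using pointwise bounds such as $\frac12 v^{\eta+1+\e}\le F(v_n^+\|u_n\|)/F(\|u_n\|)\le 2v^{\eta+1-\e}$ (for $v(x)<1$). These bounds hold only for $n\ge n_0(x)$, so they give no integrable majorant independent of $n$. The uniform integrability you single out as the main obstacle is exactly what is missing there. Your approach shows that it costs precisely $\eta+1<p^*$, and it makes the assumption $v\in L^\infty$ unnecessary.

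\textbf{A minor slip.} The sentence ``$\eta+1<p$ is not excluded'' contradicts your later, correct remark. The reason $F(\|u_n\|)$ is small compared with $\|u_n\|^p$ is that \ref{H5} gives $F(s)=o(s^p)$, whence $\eta+1\ge p$.
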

\begin{proof} (i) 
 Let $\epsilon >0$. Since   \ref{H5} there exists $s_0>0$ such that
$$|F(s)\leq \epsilon s^p  \qq{for all} 0\leq s\leq s_0.$$
On the other hand, it follows from  \ref{f1} and \ref{H5} that 
$$
\forall s\geq 0, \quad |F(s)|\leq A(s^p+s^{p^*})
$$
and then 
$$
\forall s\geq s_0, \quad |F(s)|\leq A(1+s_0^{p-p^*})s^{p^*}.
$$
Hence, 
\begin{align*}
\int_\Om \frac{F(u_n^+)}{\|u_n\|^p}\, dx &\leq \epsilon 
\int_{\{u_n\leq s_0\}} \frac{(u_n^+)^p}{\|u_n\|^p}\, dx +
\int_{\{u_n> s_0\}}
\frac{A(1+s_0^{p-p^*})(u_n^+)^{p^*}}{\|u_n\|^p}\, dx\\
\\ & \leq D_1\epsilon  +d_1A(1+s_0^{p-p^*})\|u_n\|^{p^*-p} 
\end{align*}
where  $d_1$, $d_1$  are positive constants coming from the Sobolev embeddings. Since $\|u_n\|\to 0$ as $n\to +\infty$, there exists $n_0\in\N$ depending on $\epsilon$ and $s_0$ such that $$\int_\Om \frac{F(u_n^+)}{\|u_n\|^p}\, dx\leq (d_1+1)\epsilon \qq{for all}  n\geq n_0,$$
and (i) follows.

\medskip

(ii) Consider the function $G(s):= F(\frac{1}{s}), s>0$ and  observe that $G\in RV_{-\eta-1}$ by \eqref{r}. By considering  the sequences  $s_n=1/\|u_n\|\to\infty$ and $\tau_n:=v_n^+(x)\to v(x):=\tau_0>0$, the  claim  of  the proof of  Lemma \ref{lem:conv} gives the pointwise limit $$
\lim_{n\to \infty}\frac{F\big(v_n^+ (x)\|u_n\|\big)}{F(\|u_n\|)}=v(x)^{\eta+1} \qq{a.e.}
$$
Reasoning as in \eqref{bd:fn}, fixing $\epsilon >0$, we have 
for  all $n$ large  enough,
\begin{equation*}
\frac{1}{2}\, \tau_0^{-r-1+\e}
\leq \frac{G(\tau_n^{-1} s_n)}{G(s_n)}
\leq 2\, \tau_0^{-r-1-\e} \qq{if}\tau_0<1, 
\end{equation*}
and 
\begin{equation*}
\frac{1}{2}\,  \tau_n^{-r-1-\e}
\leq \frac{G(\tau_n^{-1} s_n)}{G(s_n)}
\leq 
2\,  \tau_n^{-r-1+\e}  \qq{if}\tau_0>1. 
\end{equation*}
For a.e.  $x\in\Om$ we get 
\begin{equation*}
\frac{1}{2}\, v(x)^{\eta+1+\e}
\leq \frac{F\big(v_n^+ (x)\|u_n\|\big)}{F(\|u_n\|)}
\leq 2\, v(x)^{\eta+1-\e} \q{if}v(x)<1,
\end{equation*}
and 
\begin{equation*}
\frac{1}{2}\,  v(x)^{\eta+1-\e}
\leq \frac{F\big(v_n^+ (x)\|u_n\|\big)}{F(\|u_n\|)}
\leq 
2\,  v(x)^{\eta+1+\e}  \q{if}v(x)>1. 
\end{equation*}
Since $v\in W_0^{1,p}(\Om)\cap  L^\infty(\Om)$, we conclude again using Lebesgue dominated convergence theorem.
\end{proof}

\section{Proof of Theorem \ref{th:ex}}
\label{sec:proof}
The proof  of Theorem \ref{th:ex} for $\lambda \leq \lambda_1$ has been done in Theorem \ref{th:la<la1}. 

Furthermore, we get that   there exists an unbounded  closed and connected set $ \mathscr{C}^+$ of  positive  solutions to \eqref{eq:ell:pb} bifurcating from the trivial solution set at the bifurcation point $(\la,u)=(\la_1,0)$.

Let us define 
\begin{equation*}
\La:=\sup\big\{\la>0 : \,  \eqref{eq:ell:pb}_\la \ \text{admits a positive solution} \big\}.
\end{equation*}
Under condition {\rm \ref{H6}}, \eqref{a:vf1:<0} holds,   and the bifurcation is on the right (see Theorem \ref{th:ex2}), in other words,  $\La>\la_1$.
Moreover, by Proposition \ref{pro:non:exist}, $\La$ is upper bounded by \eqref{La}.\medskip

We will use the following order relations:
\begin{defi}\label{def:<}\textrm
1. Let $f,g:\Om\to \R$ be measurable  functions. We will denote by $f\prec g$  if for any compact $K\subset \Om$ there exists $\varepsilon >0$ such that $f(x)+\varepsilon < g(x)$ a.e $x\in K$.\\
2. Let $\Omega$ be a bounded smooth domain of $\R^N$, and   $u,v \in C_0^{1}(\overline{\Om})$, we will denote by $u\ll v$   if $v-u\in \mathscr{P}^+$ (see Definition \ref{def:P+}), in other words, if 
$u<v$ in $\Om$ and $\frac{\partial u}{\partial \nu} > \frac{\partial v}{\partial \nu}$ on $\partial\Om$. Here $\nu=\nu(x)$ stands for the outward normal at the point $x\in\partial\Om$.
\end{defi}

The applicability of those definitions can be seen in the Strong Comparison Principle for positive solutions, see Theorem \ref{th:scp}.

Recalling  the functional $J_\la$ defined in  \eqref{def:J:lambda}, let us prove the following   auxiliary result.

\begin{pro}
\label{pro:sol:min}
Assume that hypothesis {\rm \ref{f1}}--{\rm \ref{f3}} and {\rm \ref{H5}}--{\rm \ref{H6}} hold.
Assume also that $m\in C(\overline{\Om})$ satisfies {\rm \ref{m0}}--{\rm \ref{m2}}.

Then for each $\la\in(\la_1,\La)$,  problem \eqref{eq:ell:pb}$_\la$ admits a  positive solution 
$u_\la$  contained in  an interval 
$$
\mathcal{A}_\la:=\{u\in W_0^{1,p}(\Om):\underline{u}_\la\leq u\leq \overline{u}_\la\},
$$
where $\underline{u}_\la$, $\overline{u}_\la$ are appropriated sub- and super-solutions. Furthermore,  $u_\la$ is a local minimum of  the functional $J_\la$ and $J[u_\la]<0$.
\end{pro}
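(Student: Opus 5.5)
We follow the classical sub/super-solution plus truncation scheme of Alama–Tarantello, adapted to the $p$-Laplacian.

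\textbf{Step 1: super-solution.} Fix $\la\in(\la_1,\La)$. By definition of $\La$ there is $\mu\in(\la,\La)$ such that \eqref{eq:ell:pb}$_\mu$ has a positive solution $u_\mu$. Since $u_\mu>0$, we get $-\De_p u_\mu=\mu u_\mu^{p-1}+m f(u_\mu)\ge \la u_\mu^{p-1}+mf(u_\mu)$. We take $\overline{u}_\la:=u_\mu$; by Remark \ref{reguC1} it lies in $\mathscr P^+$. In fact it is a strict super-solution, because $(\mu-\la)u_\mu^{p-1}>0$ in $\Om$.

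\textbf{Step 2: sub-solution.} We take $\underline{u}_\la:=\e\vf_1$ with $\e>0$ small. We need
\[
\la_1(\e\vf_1)^{p-1}\le \la(\e\vf_1)^{p-1}+m f(\e\vf_1),
\qquad\text{i.e.}\qquad (\la_1-\la)\le m(x)\,\frac{f(\e\vf_1)}{(\e\vf_1)^{p-1}}.
\]
By \ref{H5}, $f(s)/s^{p-1}\to0$ as $s\to0^+$, uniformly since $\vf_1\le1$. So for $\e$ small the right-hand side is $\ge-(\la-\la_1)/2$, and the inequality holds because $\la>\la_1$. Shrinking $\e$ further, and using $\vf_1\in\mathscr P^+$ and $\overline{u}_\la\in\mathscr P^+$, we obtain $\e\vf_1\ll\overline{u}_\la$.

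\textbf{Step 3: truncated functional and its minimizer.} We define the truncated nonlinearity $g_\la(x,s)$, equal to $\la s^{p-1}+mf(s)$ for $s$ between $\underline{u}_\la(x)$ and $\overline{u}_\la(x)$ and frozen at the endpoint values outside, and let $\tilde J_\la$ be the associated functional. Because $\underline{u}_\la,\overline{u}_\la\in L^\infty$, the function $g_\la$ is bounded, so $\tilde J_\la$ is coercive and weakly lower semicontinuous. It therefore has a global minimizer $u_\la$. The standard comparison argument (testing with $(u_\la-\overline{u}_\la)^+$ and $(\underline{u}_\la-u_\la)^+$) gives $u_\la\in\mathcal A_\la$, so $u_\la$ solves \eqref{eq:ell:pb}$_\la$. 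Here we use the monotonicity \eqref{c1} on the compact range $[0,\|\overline{u}_\la\|_\infty]$: adding $c_1 s^{p-1}$ to both sides makes the right-hand side increasing in $s$.

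\textbf{Step 4: $u_\la$ is a local minimizer of $J_\la$, and $J_\la[u_\la]<0$.} This is the main obstacle. We first apply the strong comparison principle (Theorem \ref{th:scp}, Appendix A) to the equation shifted by $c_1 s^{p-1}$ with the strict inequalities above. This should give $\underline{u}_\la\ll u_\la\ll\overline{u}_\la$; the strictness comes from $(\mu-\la)u_\mu^{p-1}\succ0$ and from the strict sub-solution inequality. Consequently a $C^1_0$-neighbourhood of $u_\la$ lies in $\mathcal A_\la$, where $J_\la$ and $\tilde J_\la$ differ only by a constant. So $u_\la$ is a $C^1$-local minimizer of $J_\la$. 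The $W^{1,p}$ versus $C^1$ local-minimizer theorem for the $p$-Laplacian (Garc\'ia Azorero–Manfredi–Peral type), which uses the subcritical growth \ref{f1} and the regularity in Remark \ref{reguC1}, then upgrades it to a $W_0^{1,p}$-local minimizer.

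For the sign of the energy, we compute $J_\la[t\vf_1]=\frac{t^p}{p}(\la_1-\la)\int\vf_1^p-\int mF(t\vf_1)$. By \ref{H5} we have $F(s)=o(s^p)$, so this is negative for small $t$. We then arrange $\e$ small so that $\underline{u}_\la=\e\vf_1$ itself satisfies $J_\la[\e\vf_1]<0$. Since on $\mathcal A_\la$ the functional $\tilde J_\la$ equals $J_\la$ up to an additive constant, and $u_\la$ minimizes $\tilde J_\la$ over all of $W_0^{1,p}$, we get $J_\la[u_\la]\le J_\la[\e\vf_1]<0$. The delicate point throughout is the strong comparison step, because $f$ is only monotone after the $c_1 s^{p-1}$ shift and $m$ changes sign.
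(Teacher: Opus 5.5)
Your proposal is correct and follows essentially the paper's route: the super-solution is a solution at a larger parameter and the sub-solution is $\e\vf_1$; $u_\la$ minimizes $J_\la$ on $\mathcal A_\la$ (the paper cites de Figueiredo--Gossez--Ubilla instead of writing out your truncation); Theorem \ref{th:scp} gives $\underline u_\la\ll u_\la\ll\overline u_\la$; the $C^1$-versus-$W^{1,p}_0$ local-minimizer theorem upgrades the local minimum; and $J_\la[u_\la]\le J_\la[\e\vf_1]<0$. The one step you leave vague, the comparison, is where the paper makes the shift explicit: a uniform shift by $c_1s^{p-1}$ cannot make $-m^-(x)f(s)$ monotone, so one takes $g(x,s)=m^-(x)\big[f(s)+c_1s^{p-1}\big]+c_1m^+(x)s^{p-1}$ as the non-decreasing lower-order term and leaves $\la s^{p-1}+m^+(x)\big[f(s)+c_1s^{p-1}\big]+c_1m^-(x)s^{p-1}$ on the right, which makes the right-hand sides strictly ordered in the sense $\prec$.
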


\begin{proof}
\noindent {\bf  Step 1}. {\it Existence 
of a pair of ordered sub- and super-solutions, for each   fixed $\la\in(\la_1,\La)$.}

Fixing $\la\in(\la_1,\La)$, we simplify the notation $\underline{u}=\underline{u}_\la,\ \overline{u}=\overline{u}_\la$. By definition of $\La$, there exists a $\la_0\in(\la,\La)$ such that the problem \eqref{eq:ell:pb}$_{\la_0}$ admits a positive solution $u_0$. It is easy to verify that $\overline{u}=u_0\gg 0$ is a super-solution to \eqref{eq:ell:pb}$_\la$. 
Next we prove that  for every $\de>0$  small enough,  with $\de=\de_\la$,
the function $\underline{u}=\de \vf_1$ is a subsolution for \eqref{eq:ell:pb}$_\la$ whenever $\la>\la_1$. Indeed, by {\rm \ref{H5}} there exists $t_1>0$ such that 
\begin{equation}\label{rrr}
|f(s)|\leq \left(\frac{\lambda-\lambda_1}{2\|m\|_\infty}\right)s^{p-1}, \quad\quad  \forall\,  0\leq s\leq t_1.
\end{equation}
So
\begin{equation}\label{rrr2}
|F(s)|\leq \left(\frac{\lambda-\lambda_1}{2p\|m\|_\infty}\right)s^{p}, \quad\quad  \forall\,  0\leq s\leq t_1.
\end{equation}
Let $\de >0$ be such that 
$\de<\min\{t_1, \inf_{\Om}u_0/\varphi_1\}$.
Thus  for any $\psi\in W_0^{1,p}(\Om)$, $\psi>0$ within $\Om$ we deduce
\begin{align}
\label{def:J:la:de:vf}
J_\la[\de\vf_1]
&\ = -\frac{(\la-\la_1)\de^{p}}{p} \int_\Om  (\vf_1)^p \, dx -\int_\Om m(x)F(\de\vf_1) \, dx\nonumber\\
&\  \le -\left(\frac{\lambda-\lambda_1}{2p}\right) \int_\Om (\de\vf_1)^{p}\, dx  < 0.
\end{align}
Moreover
\begin{align}
\label{def:J':la:de:vf}
\langle J_\la'[\de\vf_1], \psi \rangle
&  = -(\la-\la_1)\de^{p-1} \int_\Om  \varphi_1^{p-1}\psi \, dx -\int_\Om m(x)f(\de\vf_1) \psi\, dx\nonumber\\
&  \le 
-\left(\frac{\lambda-\lambda_1}{2}\right) \int_\Om (\de\vf_1)^{p-1}\psi\, dx  < 0.\nonumber 
\end{align}
This allows us to take $\underline{u}:=\de \varphi_1$ as a subsolution for \eqref{eq:ell:pb}$_\la$ with $\underline{u}<\overline{u}:=u_0$. The sub- and supersolution method now guarantees the existence of a positive solution $u_\lambda$ to \eqref{eq:ell:pb}$_\la$, on the interval $\mathcal{A}_\la$.  Moreover, $u_\lambda$ achieves the infimum of the functional $J_\lambda $  defined by \eqref{def:J:lambda},   cf. \cite[Proposition 3.1]{deFigueiredo_Gossez_Ubilla}.

\medskip

\noindent {\bf Step 2}.  {\it We claim that $\underline{u}\ll u_\lambda\ll \overline{u}
$} . \medskip

We start by proving that $u_\lambda\ll \overline{u}$. Recall that $\overline{u}=u_0$  is a solution of  \eqref{eq:ell:pb}$_{\lambda_0}$  for some fixed $\lambda_0>\lambda$. Let $c_1>0$  be given by \eqref{c1} for $s\in [0,2\max_{\Omb} u_0]$.
We have
$$
\left\{ \begin{array}{ll}
-\Delta_p u_\lambda +g(x, u_\lambda)=h_1; \quad -\Delta_p u_0 +g(x,u_0)=h_2 \quad  \hbox{ in } \Om,\\
u_\la=u_0=0 \quad \hbox{ on } \partial\Om
\end{array}\right.
$$
with $g(x,s):= m^- (x) \big[f( s) +c_1 s^{p-1}\big] +c_1m^+ (x) s^{p-1}$, and
\begin{align*}
h_1(x)&:= \lambda  u_\la^{p-1} +m^+ (x) \big[f( u_\lambda) +c_1 u_\lambda^{p-1}\big] +c_1m^- (x) u_\lambda^{p-1},\\ 
h_2(x)&:= \lambda_0  u_0^{p-1} +m^+ (x) \big[f( u_0) +c_1u_0^{p-1}\big] +c_1m^-  (x)u_0^{p-1}.
\end{align*}
By hypothesis \ref{f2}, $g$ is non decreasing 
in $\overline{\Om}\times [0, \max_{\overline{\Om}}u_0]$. Since  $f$ is continuous we can extend $g$ in a continuous way in $\overline{\Om}\times \R^+$, and $g(x,\cdot)$ in a non-decreasing way.

Moreover, $u_0\ge u_\la$, and since $u_0\gg 0$, it is easy to see that   $u_0\succ 0$, and then
\begin{align*}
h_2-h_1&= (\lambda_0-\lambda)u_0^{p-1} +\lambda \big(u_0^{p-1}- u_\la^{p-1}\big) + c_1m^-    \big(u_0^{p-1}-u_\lambda^{p-1}\big)\\
&\quad +m^+  \Big[ \big(f( u_0) +c_1u_0^{p-1}) - \big(f( u_\lambda) +c_1 u_\lambda^{p-1}\big)\Big]
\geq (\lambda_0-\lambda)u_0^{p-1}\succ 0.
\end{align*}
Using Theorem \ref{th:scp} we conclude that $u_\lambda\ll u_0$.
The proof that $\underline{u}=\delta\varphi_1\ll u_\la$ runs similarly, as one have 
$$
\left\{ 
\begin{array}{ll}
-\Delta_p (\delta \varphi_1) +g(x,\delta \varphi_1)=h_0&\quad  \hbox{ on } \Om,\\
\delta \varphi_1=0 &\quad \hbox{ in } \partial\Om,
\end{array}
\right.
$$
with 
$h_0:=\lambda_1 (\delta \varphi_1)^{p-1}+g(\cdot, \delta \varphi_1)$.
Obviously, $u_\la\ge \de\vf_1$. By adding $\pm m^+f(\delta\vf_1)$, using \eqref{c1}, in the first inequality, and  \eqref{rrr} in the last equality, we have
\begin{align*}
h_1-h_0
&= \la u_\la^{p-1}-\la_1(\delta\vf_1)^{p-1}  +m^+  \big[f(u_\la) +c_1 u_\la^{p-1} -c_1(\delta\vf_1)^{p-1}\big] \\
& \qquad +m^-  \Big[c_1 \Big(u_\la^{p-1}-(\delta\vf_1)^{p-1}\Big) - f(\delta\vf_1) \Big]\\
&\geq  
(\lambda-\lambda_1) (\delta\vf_1)^{p-1} +mf(\delta\vf_1)\\
&\geq \frac{\lambda-\lambda_1}{2} (\delta\vf_1)^{p-1} \succ 0 .
\end{align*}

\noindent {\bf Step 3}.  {\it   $u_\la$ is a local minimum of $J_\la$ and $J_\la(u_\la)< 0$.}\medskip

First, observe that the set 
$$
{\mathcal B}_\lambda:=\{v\in C_0^1(\overline{\Om}) : \underline{u}\ll v\ll \overline{u}\} 
$$ 
is open on the $C_0^1(\overline{\Om})$-topology. 
Moreover, $ u_\lambda\in {\mathcal B}_\lambda,$ and, since ${\mathcal B}_\lambda \subset {\mathcal A}_\lambda$,  we have that $u_\lambda$ is  a local minimum of $J_\lambda$ on ${\mathcal B}_\lambda $.  Therefore, there exists  $r_0>0$ sufficiently small  such that 
\begin{equation}\label{u:la:min}
J_\lambda  [u_\lambda]\leq J_\lambda [u_\lambda+ w] \quad \forall w\in C_0^1(\overline{\Om}), \quad \|w\|_{C_0^1(\overline{\Om})}\leq  r_0.    
\end{equation}
Brézis and Nirenberg  proved that a local minimizer on the $C^1$ topology must be a local minimizer on the $H^1$ topology, see \cite{Brezis_Nirenberg}. In \cite[Proposition 3.9]{deFigueiredo_Gossez_Ubilla}, de Figueiredo, Gossez and Ubilla
generalized that result for the $p$-Laplacian case. 
Using this generalisation, we conclude that $u_\lambda$ is a local minimizer of $J_\lambda$ with respect to the  $W_0^{1,p}(\Om)$-topology.
\smallskip

Finally, using \eqref{def:J:la:de:vf},
$J_{\la}[u_\la]\leq J_{\la} [\delta \varphi_1]<0,$ which ends the Proof of Proposition \ref{pro:sol:min}.
\end{proof}

\smallskip

\begin{proof}[Proof of Theorem \ref{th:ex} for $\lambda \in (\lambda_1,\Lambda)$]

A second solution  to \eqref{eq:ell:pb}$_\la$ will follow through variational arguments \cite[Theorem 5.11]{deFigueiredo} and the Mountain--Pass procedure shown below.

First, we  apply again \cite[Theorem 5.10]{deFigueiredo} on the nature of local minima  $u_\la$ found in Proposition \ref{pro:sol:min}. Let $r_0>0$ be as in \eqref{u:la:min}.
Thus,  according to \cite[Theorem 5.10]{deFigueiredo}, either

\smallskip

(i) 
for all $\e\in(0,r_0]$, there exists $\e_0\in(0,\e)$, such that  $\inf\big\{J_\la [u] : \|u-u_\la\| = \e_0 \big\} >J_\la [u_\la],$ or else

\smallskip 

(ii) for each $\de >0$ small enough, there exists $u_\de \in W_0^{1,p}(\Om)$ such that  $J_\la$ has a local minimum at a point $u_\de$  with $\|u_\de-u_\la\|=\de$ and $J_\la [u_\de]=J_\la [u_\la].$ 
\smallskip

Let us assume that (i) holds,  since otherwise  case (ii) implies the existence of a second solution $u_\de\not=0$,  since $J_\la [u_\de]=J_\la [u_\la]<0$, which is then a positive solution of \eqref{eq:ell:pb}$_\la$.
Remember that nontrivial critical points of $J_\la$ are positive solutions of \eqref{eq:ell:pb}$_\la$, see Remark \ref{sol:+}. \\

Consider  the functional $I_\la : W_0^{1,p} (\Om)  \to \R$ given by $I_\la[v]=J_\la[u_\la+v]-J_\la[u_\la]$, more specifically
\begin{align*}
I_\la[v]&:=\frac{1}{p} \int_{\Om}\,  |\na (u_\la+v)|^p\, dx-\frac{\la}{p} \int_{\Om}\,   |(u_\la +v)^+|^p \, dx
-\frac{1}{p}\int_\Om |\nabla u_\la |^p\, dx\\
& +\frac{\la}{p} \int_\Om | u_\la |^p\, dx 
-\int_{\Om}\, m(x)\Big[F((u_\la +v)^+)-F(u_\la)\Big]\, dx\,.
\end{align*}

\noindent  We are going to prove that there exists a function $u_0$ such that $I_\la  [u_0]<0$. Take   $v_0\geq 0$, $v_0\not\equiv 0$ in $W_0^{1,p}( \Om^+)$,  $v_0=0$ in $\Om^-$.
Since 

\begin{align*}
\lim_{t\to+\infty} &\frac1{t^p} \left(\frac{1}{p} \int_\Om |\nabla (u_\la +tv_0)|^p\, dx -\frac{\la}{p} \int_\Om | u_\la +tv_0|^p\, dx  -\frac{1}{p} \int_\Om |\nabla u_\la |^p\, dx \right.\\
&\qquad +\left.\frac{\la}{p} \int_\Om | u_\la |^p\, dx +\int_{\Om}m^+(x)F(u_\la)dx\right) \\
&= \frac{1}{p} \int_\Om |\nabla v_0|^p\, dx -\frac{\la}{p} \int_\Om | v_0|^p\, dx,
\end{align*}
and $F(t)/t^p\to \infty$ as $t\to \infty,$ then
\begin{equation}\label{ee}
\lim_{t\to+\infty} \frac{I_\la [tv_0 ]}{F(t)} =- \lim_{t\to+\infty}\int_{\Om}m^+(x)\, \frac{F\big(u_\la(x) +tv_0(x)\big)}{F(t)}\, dx.
\end{equation}
Besides, using that $F$ is a function regularly varying at infinity of order $q+1$, see \eqref{reg:var:F} in Theorem \ref{usfF}, we deduce that   pointwise 
$$
\lim_{t\to\infty} \frac{F\big(u_\la (x) +tv_0(x)\big)}{F(t)}=\big(v_0(x)\big)^{q+1}.
$$
Furthermore, since  for all $t\geq 1$, $|\frac{u_\la}{t}+v_0|\leq |u_\la |+|v_0|\in L^{p^*}(\Om)$,
using a similar argument to prove \eqref{cio2} in Lemma \ref{lem:conv} we get that
 $$\lim_{t\to\infty} \frac{F\big( t(\frac{u_\la}{t}+ v_0)\big)}{F(t)}=v_0^{q+1}
\quad \hbox{ in } L^1 (\Om). 
$$
So, Lebesgue convergence Theorem imply  in \eqref{ee} that
$$
\lim_{t\to+\infty} \frac{I_\la [tv_0 ]}{F(t)} =-\int_{\Om}m^+(x)v_0^{q}\, dx<0 
$$
and we can get for $t_1>0$ large enough that $I_\la [t_1 v_0]<0$.
\medskip 

Finally, let us define
$$
\Gamma :=\{ \mathbf{g}\in C\big([0,1];W_{0}^{1,p}(\Om)\big) : \,\mathbf{g} (0)=0,\ \mathbf{g}(1)=t_1v_0\},
 $$
and  the value 
$$
c:=\inf_{\mathbf{g}\in \Gamma } \max_{0\leq t\leq 1} I_\la[\mathbf{g}(t)]<+\infty. 
$$

If follows from the fact that $u_\lambda$ is a  local minimum of $J_\lambda$, and that $I_\lambda (0)=0$, that $0$ is a  local minimum of $I_\lambda$.    We are assuming that alternative (i) prevails for $J_\lambda$.  Since for  any path $\mathbf{g}\in \Ga$ will meet the set  $\|v\|=\eps_0$, it comes that 
$$c\geq \min\{ I_\lambda (v) : \|v \|=\varepsilon_0\}>0. $$ 

Moreover, from \cite[Theorem 5.1]{deFigueiredo} we derive that there exists a sequence $\{v_n\}_{n\in\N}$ such that
$$
I_\la[v_n]\to c,\qquad \|I_\la'[v_n]\|\to 0.
$$
Thus, the sequence $w_n:=u_\la+v_n$ is a (PS) sequence for the original functional $J_\la.$ 
Since Theorem \ref{th:comp:PS}, if 
 $\lambda<\la_1( \om^0)$
there exists a subsequence, still denoted $\{v_n\}$, and there exists $v_\la\in W_0^{1,p}(\Om)$ such that $v_n\to  v_\la$ in $W_0^{1,p}(\Om).$ So $I_\la'[v_\la]=0$, $I_\la[v_\la]=c>0,$ $v_\la\ne 0$. Consequently, $w_\la:=u_\la+v_\la$ is a positive critical point of $J_\la,$ 
and thereby  $w_\la\ne u_\la$ is a second positive solution to \eqref{eq:ell:pb}$_\la$, see Remark \ref{sol:+}. 
\end{proof}

\smallskip

\begin{proof}[Proof of Theorem \ref{th:ex} \, for $\lambda=\Lambda$]

Let $\{\la_n\}\subset\R$ be a non-decreasing sequence converging to $\La$ with $\lambda_n\in (\lambda_1,\La)$.  

Proposition \ref{pro:sol:min}, ensures that for each
$\la\in(\la_1,\La)$,  there exists a  positive solution $u_\la$ to the problem \eqref{eq:ell:pb}$_\la$. Moreover, the solution $u_\la$ is contained in  an interval $\mathcal{A}_\la:=\{u\in W_0^{1,p}(\Om):\underline{u}_\la\leq u\leq \overline{u}_\la\}$, where $\underline{u}_\la$, $\overline{u}_\la$ are ordered sub- and super-solutions. Furthermore,  $u_\la$ is a local minimum of  the functional $J_\la$ and $J_\la[u_\la]<0$. For each $n$, pick into that family of positive solutions, $u_{n}=u_{\la_{n}}$ solving \eqref{eq:ell:pb}$_{\la_{n}}$. 

\bigskip

On the other hand, $J_{\lambda_n}'[u_n]=0$ for all $n$, then $\{u_n\}$ is, moreover, a (PS)$_{\la_n}$ sequence. Since  we are assuming that $\La<\la_1\big(\om^{0}\big)$,   hence the sequence   $\{u_n\}$  possesses a subsequence strongly convergent in $W_0^{1,p} (\Om)$ by Theorem \ref{th:comp:PS}. Denote by $u$ the limit of that converging subsequence,  still denoted by $\{u_n\}$.  Passing to the limit in the weak  equation of \eqref{eq:ell:pb}$_{\lambda_n}$ we get that $u\geq 0$ solves \eqref{eq:ell:pb}$_{\La}.$ It remains to prove that $u\not\equiv 0$.
Indeed,  if on the contrary $\|u_n\|\to 0$,  the sequence $w_n:=\frac{u_n}{\|u_n\|}\geq 0$ converges weakly to some $w\geq 0$ in $W_0^{1,p}(\Om)$ and strongly in $L^p (\Om)$.  Testing against $\psi=u_n$ the weak  equation of \eqref{eq:ell:pb}$_{\lambda_n}$ divided by $\|u_n\|_{}^{p}$ and going to $+\infty$ we find, using {\rm \ref{H5}},  that
$1=\La \int_\Om w^p\, dx$  	
so $w\not=0$. Also, testing against  $\psi=w_n-w$, dividing by $\|u_n\|^{p-1}$ and  using  {\rm \ref{H5}} and the $S^+$ property, c.f. Proposition \ref{S^+}, we prove that $w_n\to w$ strongly in $W_0^{1,p}(\Om)$. Thus, $w$ is a nonnegative nontrivial solution of the weak equation
$$
\int_\Om |\nabla w|^{p-2}\nabla w\cdot \nabla \psi \, dx =\Lambda \int_\Om w^{p-1}\psi \, dx
$$
which is impossible because  $\Lambda\not=\lambda_1$.
\end{proof}

\begin{appendix}
\section{Strong Comparison Principle for positive solutions}

\renewcommand{\thesection}{\Alph{section}}
\numberwithin{equation}{section}
\numberwithin{thm}{section}

Next, we give an strong comparison principle for positive solutions, useful in the proof of Theorem \ref{th:ex}, see section \ref{sec:proof}. We slightly generalise the result of \cite[Proposition 2.6]{Arcoya_Ruiz}. 

For the order notations $f\prec g$ and $u\ll v$ we recall Definition \ref{def:<}.

\begin{thm}\label{th:scp}
Let $p>1$,  $\Om $  a bounded   domain in $\R^N$ of class $C^1$, and let $g:\Om\times \R\to \R$ be a continuous function. 
Assume further that for all $x\in \Omega$, the function $s\to g(x,s)$ is non-decreasing. Let $f,h \in L^{\infty}(\Om)$ be such that $f\prec h$ and let  $u,v \in C_0^{1}(\overline{\Om })$, such  that   $0\ll v$, weakly satisfy
\begin{equation*}  
\begin{cases}
- \Delta_p u +g(x,u) =f, &\text{ \ \ in \ } \Om, \\
-\Delta_p v+g(x,v)=h, &\text{ \ \ in \ } \Om, \\
\qquad u  =v=0  , &\text{ \ \ on \ } \partial \Om .
\end{cases}
\end{equation*}
Then $u \ll v $. 
\end{thm}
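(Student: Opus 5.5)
The plan is to follow the Arcoya--Ruiz strategy: first obtain the weak comparison $u\le v$, then upgrade it to the strong ordering $u\ll v$ with a local strong comparison argument near each point of $\Om$ and a boundary argument.

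\emph{Weak comparison.} Subtract the two weak equations and test with $\psi=(u-v)^+\in W_0^{1,p}(\Om)$. This gives
\[
\int_\Om\big(|\na u|^{p-2}\na u-|\na v|^{p-2}\na v\big)\cdot\na(u-v)^+\,dx+\int_\Om\big(g(x,u)-g(x,v)\big)(u-v)^+\,dx=\int_\Om (f-h)(u-v)^+\,dx .
\]
The second integral on the left is nonnegative because $g(x,\cdot)$ is non-decreasing. The right-hand side is $\le 0$, since $f\prec h$ gives $f\le h$ a.e. on every compact set, hence a.e. in $\Om$. By strict monotonicity of $\xi\mapsto|\xi|^{p-2}\xi$, the first integral vanishes only if $\na(u-v)^+=0$. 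Therefore $u\le v$ in $\Om$.

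\emph{Strict interior inequality.} Suppose $u(x_0)=v(x_0)$ for some $x_0\in\Om$. We know $v>0$ on a compact ball $K=\overline{B}\subset\Om$ around $x_0$, and $f+\e<h$ a.e. on $K$. Set $w=v-u\ge0$, so $w(x_0)=0$, and write the equation for $w$ in linearized form
\[
-\operatorname{div}\big(A(x)\na w\big)=h-f-\big(g(x,v)-g(x,u)\big),\qquad A(x)=\int_0^1 D a\big(\na u+t\na w\big)\,dt ,
\]
where $a(\xi)=|\xi|^{p-2}\xi$. Two cases arise.

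If $\na v(x_0)\ne0$, then $\na u(x_0)=\na v(x_0)$ because $w\ge 0$ attains its minimum at $x_0$. By $C^1$ continuity, $A$ is uniformly elliptic and continuous on a small ball. The right-hand side is $\ge \e-\omega(|w|)$, where $\omega$ is the modulus of continuity of $g$ on compacts; this is positive near $x_0$ because $w(x_0)=0$. Then $w$ is a nonnegative strict supersolution of a uniformly elliptic linear equation vanishing at an interior point, which contradicts the strong maximum principle (or weak Harnack).

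If $\na v(x_0)=0=\na u(x_0)$, the operator degenerates and the linearization above does not apply. This is the main obstacle, and I would handle it as in Arcoya--Ruiz using the strict gap $\e$. Near $x_0$, $|\na u|,|\na v|$ are small, so the equation for $v$ gives, in the weak sense, $-\Delta_p v \ge -\Delta_p u + \e/2$ on a small ball $B_r(x_0)$ (continuity of $g$ plus $w$ small near $x_0$). Compare $u+\phi$ with $v$, where $\phi\ge 0$ is a small bump supported in $B_r(x_0)$ with $\phi(x_0)>0$ and $-\Delta_p(u+\phi)\le-\Delta_p u+\e/2$. The last condition holds if $\|\phi\|_{C^2}$ is small, by continuity of the $p$-Laplacian in $W^{-1,p'}$ and of $\na u$. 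The weak comparison principle on $B_r(x_0)$ then gives $u+\phi\le v$, contradicting $u(x_0)=v(x_0)$. Making this perturbation step rigorous is where I expect the real work to be.

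\emph{Boundary.} Since $0\ll v$, we have $\p v/\p\nu<0$ on $\p\Om$, so $\na v\ne0$ in a neighbourhood $U$ of $\p\Om$. If $\p u/\p\nu=\p v/\p\nu$ at some boundary point, then $\na u$ is also nonzero near that point. The linearized operator above is therefore uniformly elliptic in a small half-neighbourhood, and $w>0$ inside by the interior step. Hopf's lemma then gives $\p w/\p\nu<0$, a contradiction. Hence $u\ll v$.
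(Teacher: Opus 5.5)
Your weak comparison step and your nondegenerate interior case are fine. The gap is the degenerate case $\na u(x_0)=\na v(x_0)=0$, and your proposed argument there does not work.

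\emph{Why the perturbation step fails.} Continuity of $-\De_p$ into $W^{-1,p'}$ only makes $-\De_p(u+\phi)+\De_p u$ small in a dual norm. That gives no one-sided distributional bound by the constant $\e/2$. The claim is also false as stated. Take $1<p<2$, $u$ constant near $x_0$, and $\phi=\eta(r^2-|x-x_0|^2)$ near $x_0$. Then $-\De_p(u+\phi)=(2\eta)^{p-1}(N+p-2)|x-x_0|^{p-2}$, which is unbounded near $x_0$ for every $\eta>0$, although $\|\phi\|_{C^2}$ is small. For general $u\in C^{1,\al}$, the quantity $-\mathrm{div}\big(a(\na u+\na\phi)-a(\na u)\big)$ involves $D^2u$, which need not exist near critical points.

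\emph{The missing idea.} No analysis at critical points is needed, because the hypothesis $0\ll v$ already confines them.
\begin{itemize}
\item Run your nondegenerate argument at every contact point. This shows $Z:=\{x\in\Om:\ u(x)=v(x)\}\subset\{\na v=0\}$.
\item Since $\p v/\p\nu<0$ on $\p\Om$, we have $\na v\neq0$ near $\p\Om$. Hence $Z$ is a compact subset of $\Om$.
\item Suppose $Z\neq\emptyset$. Pick an open $D\supset Z$ with $\overline D\subset\Om$; then $w\ge c_0>0$ on $\p D$. Take $\e>0$ with $f+\e<h$ a.e. on $\overline D$. Take $\tau\in(0,c_0)$ so that $|g(x,s)-g(x,t)|\le\e/2$ whenever $x\in\overline D$, $|s|,|t|\le M:=\max\{\|u\|_\infty,\|v\|_\infty\}$ and $|s-t|\le\tau$.
\item Test the difference of the two equations with $\psi=(\tau-w)^+\chi_D\in W_0^{1,p}(D)$, where $a(\xi)=|\xi|^{p-2}\xi$. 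This gives
\begin{multline*}
\e\int_D\psi\,dx\le\int_D(h-f)\psi\,dx\\
=-\int_{D\cap\{w<\tau\}}\big(a(\na v)-a(\na u)\big)\cdot\na w\,dx+\int_D\big(g(x,v)-g(x,u)\big)\psi\,dx\le\frac{\e}{2}\int_D\psi\,dx .
\end{multline*}
\item Hence $\psi\equiv0$, so $w\ge\tau$ on $D\supset Z$, a contradiction.
\end{itemize}

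\emph{The boundary step also has a gap.} Hopf's lemma needs $w$ to be a supersolution near $x_1\in\p\Om$. However, $-\mathrm{div}(A\na w)=h-f-\big(g(x,v)-g(x,u)\big)$. The gap coming from $f\prec h$ may degenerate at $\p\Om$. The $g$-term has the wrong sign, and for merely continuous $g$ it is not of the form $c(x)w$ with $c$ bounded. Under the stated hypotheses this cannot be repaired. Take $N=1$, $p=2$, $\Om=(0,1)$ and set
\begin{itemize}
\item $v=\sin(\pi x)$, $w=x^3(1-x)$, $u=v-w$;
\item $\be=2(w'')^++x(1-x)$ and $g(x,s)=\be(x)\min\{1,\max\{0,(s-u(x))/w(x)\}\}$;
\item $f=-u''$ and $h=-v''+\be$.
\end{itemize}
Then $g$ is continuous (even up to $\p\Om$) and nondecreasing in $s$, and $0\ll v$. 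Also $h-f=|w''|+x(1-x)$, so $f\prec h$. Yet $u'(0)=v'(0)=\pi$, so $u\ll v$ fails. The normal-derivative part therefore needs extra structural control of $g(x,v)-g(x,u)$ near $\p\Om$. In Arcoya--Ruiz, which the paper simply cites, $g=\la|s|^{p-2}s$ is explicit.
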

\begin{proof}
One can reproduce with some minor changes the proof of \cite{Arcoya_Ruiz}, where the case $g(x,s)=\lambda |s|^{p-2}s$ with $\lambda\geq 0$ is considered. 
\end{proof}

\section{Vector inequalities and  the \texorpdfstring{$S^+$}{} property}
Let us recall in this section the following inequalities which are mostly used to prove the so-called $S^+$-property of the $p$-Laplacian on $W_0^{1,p}(\Om)$ when $1<p<+\infty$, see \cite{Lindqvist}.

\begin{lem}
Let  $a,b\in \R^N$ and  $p>1$. Then 
\medskip

\noindent (i) if $p\geq 2$,  
    $\langle |b|^{p-2}b -|a|^{p-2}a,  b-a\rangle \geq 2^{2-p}|a-b|^p $;
\medskip    
    
\noindent (ii) if $1<p<2$, \,    
   $\langle |b|^{p-2}b -|a|^{p-2}a, \, b-a\rangle \geq \dfrac{(p-1)|a-b|^2}{\left(1+|a|^2+|b|^2\right)^{1-\frac{p}{2}}}$ .
\end{lem}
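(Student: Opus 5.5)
The plan is to get both inequalities from one integral representation, with a second, algebraic argument to recover the sharp constant in (i). These are the classical inequalities collected by Lindqvist, and the argument is elementary.

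Set $c:=b-a$ and $a_t:=a+tc$ for $t\in[0,1]$, and let $\Phi(x):=|x|^{p-2}x$. For $x\neq 0$, $\Phi$ is $C^1$ with
$$D\Phi(x)=|x|^{p-2}\Big(I+(p-2)\frac{x\otimes x}{|x|^2}\Big).$$
The segment $t\mapsto a_t$ meets $0$ in at most one $t$ unless $a=b=0$, which is trivial. Also $|D\Phi(x)|\le C|x|^{p-2}$, and $t\mapsto|a_t|^{p-2}$ is integrable near a zero because $p-2>-1$. Hence $\Phi(b)-\Phi(a)=\int_0^1 D\Phi(a_t)c\,dt$, and therefore
$$\langle \Phi(b)-\Phi(a),b-a\rangle=\int_0^1\langle D\Phi(a_t)c,c\rangle\,dt .$$
The eigenvalues of $D\Phi(x)$ are $|x|^{p-2}$ and $(p-1)|x|^{p-2}$. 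So the integrand is bounded below by $\min\{1,p-1\}\,|a_t|^{p-2}|c|^2$.

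\textbf{Case (ii), $1<p<2$.} The lower bound is $(p-1)|c|^2\int_0^1|a_t|^{p-2}\,dt$. By convexity of the norm,
$$|a_t|\le\max\{|a|,|b|\}\le(1+|a|^2+|b|^2)^{1/2}.$$
Since $p-2<0$, this gives $|a_t|^{p-2}\ge(1+|a|^2+|b|^2)^{(p-2)/2}$ pointwise, which is exactly (ii). I expect no obstacle here beyond the integrability remark above.

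\textbf{Case (i), $p\ge2$.} The integral method gives $|c|^2\int_0^1|a+tc|^{p-2}dt$. For fixed $c$ this is minimized when $a=-c/2$, yielding only $\frac{2^{2-p}}{p-1}|c|^p$, which is weaker than claimed. So for the sharp constant I would instead use the exact identity
$$\langle\Phi(b)-\Phi(a),b-a\rangle=\tfrac12\big(|a|^{p-2}+|b|^{p-2}\big)|b-a|^2+\tfrac12\big(|b|^{p-2}-|a|^{p-2}\big)\big(|b|^2-|a|^2\big).$$
The second term is $\ge0$ because $s\mapsto s^{p-2}$ is nondecreasing.

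If $p\ge3$, convexity of $s\mapsto s^{p-2}$ and the triangle inequality give
$$\tfrac12\big(|a|^{p-2}+|b|^{p-2}\big)\ge\Big(\frac{|a|+|b|}{2}\Big)^{p-2}\ge 2^{2-p}|b-a|^{p-2}.$$
This yields (i).

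The main obstacle is the range $2\le p<3$, where $s^{p-2}$ is concave and the first term alone only gives the constant $2^{1-p}$. There I would keep the second term. After rotation and homogeneity (set $|b-a|=1$), the inequality becomes a two-parameter inequality in $r=|a|$, $s=|b|$ with $|r-s|\le1\le r+s$. I would check it by treating the extremal configuration $|a|=|b|$ separately: then the second term vanishes and $|a|\ge\frac12$, so the bound $2^{2-p}$ holds directly. Then I would show that moving away from $|a|=|b|$, along $r+s$ fixed, only increases the left-hand side, using the monotonicity of the second term.

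If this reduction resists, a fallback suffices for the application in Proposition~\ref{S^+}. Any constant $c_p>0$ works in place of $2^{2-p}$ for the $S^+$ property, so the weaker integral bound already suffices for that purpose.
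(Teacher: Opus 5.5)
The paper gives no proof of this lemma; it only quotes the inequalities from Lindqvist. Your proof of (ii) is complete, since the integrability remark correctly handles a crossing of the origin. Part (i) for $p\ge 3$ is also complete. The gap is (i) for $2<p<3$: you only sketch it, and the reason you give for its key step does not work.

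Normalize $|b-a|=1$, and put $r=|a|\le s=|b|$, $\sigma=r+s\ge 1$ fixed, $d=(s-r)/2$. You need
\[
h(d)=\tfrac12\big(r^{p-2}+s^{p-2}\big)+\tfrac12\big(s^{p-2}-r^{p-2}\big)\big(s^2-r^2\big)
\]
to be nondecreasing in $d$. The second term does increase with $d$. But for $2<p<3$ the map $t\mapsto t^{p-2}$ is concave, so the first term strictly \emph{decreases} as $r,s$ spread apart with fixed sum. Monotonicity of the second term therefore proves nothing; you must show its gain dominates the loss in the first term, and the constraint $r+s\ge |b-a|$ has to enter exactly there. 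As written, this case is unproved. Your fallback, some constant $c_p>0$, suffices for the $S^+$ property in Appendix B but not for the lemma as stated.

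The step can be closed inside your framework. Differentiating while keeping $|b-a|^2$ visible gives
\[
h'(d)=\tfrac{p-2}{2}\big(s^{p-3}-r^{p-3}\big)|b-a|^2+\tfrac{p-2}{2}\big(s^2-r^2\big)\big(s^{p-3}+r^{p-3}\big)+(r+s)\big(s^{p-2}-r^{p-2}\big).
\]
The coefficient of $|b-a|^2$ is $\le 0$, and $|b-a|^2=1\le (r+s)^2$. So replacing $|b-a|^2$ by $(r+s)^2$ only lowers $h'(d)$, and the resulting expression simplifies exactly to $(p-1)(r+s)\big(s^{p-2}-r^{p-2}\big)\ge 0$, which is the antiparallel case. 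Together with $h(0)=(\sigma/2)^{p-2}\ge 2^{2-p}$, this proves (i).

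A shorter route works for all $p\ge 2$ at once. For fixed $|a|,|b|$, the left-hand side is affine in $u=\langle a,b\rangle$, while $2^{2-p}(|a|^2+|b|^2-2u)^{p/2}$ is convex in $u$. Hence their difference is minimized at $u=\pm|a||b|$, i.e. for collinear $a,b$. In that case, by homogeneity, it suffices that $g(\alpha)=\phi(\alpha+1)-\phi(\alpha)$, with $\phi(t)=|t|^{p-2}t$, satisfies $g\ge 2^{2-p}$. This holds because $g'(\alpha)=(p-1)\big(|\alpha+1|^{p-2}-|\alpha|^{p-2}\big)$ is $\le 0$ for $\alpha<-\tfrac12$ and $\ge 0$ for $\alpha>-\tfrac12$, and $g(-\tfrac12)=2^{2-p}$.
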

As an application one can prove the so-called $S^+$ property of the $p$-laplacian, that  we presented here on a local version.
\begin{pro}\label{S^+}
Let  $\Om\subset\R^N$ be an open bounded set and  $p>1$. Let $u\in W_0^{1,p}(\Om)$,  $\{u_n\}_n $ be a sequence in $ W_0^{1,p}(\Om)$  and  $\psi\in C_c^{1}(\Om)$, $\psi\geq 0$. 
If 
$$  \lim_{n\to\infty}\int_\Om \psi \left( |\nabla u_n|^{p-2} \nabla u_n -|\nabla u|^{p-2}\nabla u\right) \cdot (\nabla u_n-\nabla u)\, dx =0$$
then 
$$\lim_{n\to\infty}\int_\Om \psi |\nabla u_n-\nabla u|^p\, dx=0.$$
\end{pro}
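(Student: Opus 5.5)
We prove the local $S^+$ property of Proposition~\ref{S^+} by inserting the vector inequalities of the preceding Lemma pointwise with $a=\nabla u(x)$, $b=\nabla u_n(x)$. Set
\[
\Phi_n(x):=\big\langle |\nabla u_n|^{p-2}\nabla u_n-|\nabla u|^{p-2}\nabla u,\ \nabla u_n-\nabla u\big\rangle\ge 0 .
\]
By hypothesis, $\int_\Om\psi\,\Phi_n\,dx\to0$ with $\psi\ge0$. We treat the cases $p\ge2$ and $1<p<2$ separately.

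\emph{Case $p\ge 2$.} Inequality (i) gives $\psi|\nabla u_n-\nabla u|^p\le 2^{p-2}\psi\,\Phi_n$ pointwise. Integrating, we get $\int_\Om\psi|\nabla u_n-\nabla u|^p\,dx\le 2^{p-2}\int_\Om\psi\Phi_n\,dx\to0$, which is the conclusion.

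\emph{Case $1<p<2$.} This is the main step. Write $w_n:=\big(1+|\nabla u|^2+|\nabla u_n|^2\big)^{1/2}$. Inequality (ii) gives $|\nabla u_n-\nabla u|^2\le (p-1)^{-1}\Phi_n\,w_n^{\,2-p}$. We raise this to the power $p/2$ and apply H\"older with exponents $2/p$ and $2/(2-p)$, using the measure $\psi\,dx$:
\[
\int_\Om\psi|\nabla u_n-\nabla u|^p\,dx\le (p-1)^{-p/2}\Big(\int_\Om\psi\,\Phi_n\,dx\Big)^{p/2}\Big(\int_\Om\psi\,w_n^{\,p}\,dx\Big)^{(2-p)/2}.
\]
The first factor tends to $0$ by hypothesis. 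For the second factor, note $w_n^p\le C\big(1+|\nabla u|^p+|\nabla u_n|^p\big)$, $\psi$ is bounded and $\Om$ is bounded, so it suffices that $\sup_n\|u_n\|<\infty$.

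The only delicate point is this boundedness of $\|u_n\|$, which the statement does not list explicitly. In every application in the paper (Claim 1 of Proposition~\ref{pro:PS}, where $\|v_n\|=1$; Theorem~\ref{th:comp:PS}, where the (PS) sequence is bounded; and the proof at $\la=\La$, where $\|w_n\|=1$) it holds, because the sequence converges weakly. If it is not assumed, we recover it on the support of $\psi$ from the hypothesis itself. Indeed, $\Phi_n\ge |\nabla u_n|^p-|\nabla u_n|^{p-1}|\nabla u|-|\nabla u|^{p-1}|\nabla u_n|+|\nabla u|^p$. Young's inequality then gives $\Phi_n\ge \tfrac12|\nabla u_n|^p-C|\nabla u|^p$. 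Hence $\int\psi|\nabla u_n|^p\le 2\int\psi\Phi_n+C\int\psi|\nabla u|^p$ is bounded, and this is exactly what the H\"older step needs. Combining the two factors yields $\int_\Om\psi|\nabla u_n-\nabla u|^p\,dx\to0$, completing the proof.
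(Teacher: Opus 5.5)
Your proof is correct and is the argument the paper intends. The paper gives no written proof: it presents Proposition~\ref{S^+} only as an application of the two vector inequalities (citing Lindqvist), and your case split, with the H\"older step for $1<p<2$, is the standard way to carry this out. Your Young-inequality bound $\Phi_n\ge\frac12|\nabla u_n|^p-C|\nabla u|^p$ controls $\int_\Om\psi|\nabla u_n|^p\,dx$ without assuming $\sup_n\|u_n\|<\infty$, which fills the one point the statement leaves implicit.
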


\section{Compactness for slightly subcritical nonlinearities}

Consider the quasilinear elliptic problem with parameter $\lambda$
\begin{equation*}
\left\{
\begin{array}{rll}
-\De_p u &=\ \la  |u|^{p-2}u+g(\la ,x,u), & \mbox{in } \Om,\\
u&=\ 0,  &\mbox{on } \partial \Om,
\end{array}
\right.    
\end{equation*}
where $g:\R\times\Omega\times  \R\to\R$ is
a Carath\'eodory function such that, for any bounded set $J\subset \R$ there exists some continuous  function  $\bar{f}:\R^+\to\R^+$ such that 
\begin{equation}\label{crois g}
|g(\la ,x, s)|\leq 
\bar{f}(|s|)\quad a.e. \, x\in \Omega, \quad \forall (\lambda,s)\in J\times \R,
\end{equation}
where $\bar{f}$ is slightly subcritical (i.e. $\bar{f}$ satisfies {\rm \ref{f1}} for $f=\bar{f}$).
\medskip

Let us define the following operators $N,S: W_0^{1,p}(\Omega)\to W^{-1,p'}(\Omega)$ and  $G :\R\times W_0^{1,p}(\Omega)\to W^{-1,p'}(\Omega)$ :
\begin{align*}
\langle N(u),\psi\rangle &=\int_\Omega |\nabla u|^{p-2}\nabla u\cdot \nabla \psi \, dx,
\\
\langle S(u),\psi\rangle &=\int_\Omega | u|^{p-2}u \psi \, dx,
\nonumber\\
\langle G(\lambda, u),\psi\rangle &=\int_\Omega g(\la ,x, u)\psi\, dx.\nonumber 
\end{align*}
Define the product space $E:=\R\times W_0^{1,p}(\Omega)$ endowed with the  norm $\|(\lambda, u)\|_E:=|\lambda|+\|u\|$.\\

In order to  reformulate Drabek's bifurcation theorem for the equation $N-\lambda S -G(\lambda,\cdot)=0$ we need to assure that $G$ is a {\em compact} operator. 
\begin{pro}\label{pro:compG}
Assume that $g$ satisfies the growth condition \eqref{crois g}
with $\bar{f}$   continuous and satisfying  {\rm \ref{f1}} for $f=\bar{f}$. 

Then for any bounded closed interval $I\subset \R$,  the operator $G:I\times W_0^{1,p}(\Om)\to W^{-1,p'}(\Omega)$ is compact.
\end{pro}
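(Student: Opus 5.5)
Compactness here means that $G$ maps bounded subsets of $I\times W_0^{1,p}(\Om)$ into relatively compact subsets of $W^{-1,p'}(\Om)$, and that $G$ is continuous. The approach is sequential and follows the Lebesgue–Vitali argument already used in the proof of Theorem \ref{th:comp:PS}.

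Take a bounded sequence $(\la_n,u_n)$ in $I\times W_0^{1,p}(\Om)$. After passing to a subsequence, $\la_n\to\la$, $u_n\wto u$ in $W_0^{1,p}(\Om)$, $u_n\to u$ strongly in $L^r(\Om)$ for every $r<p^*$, and $u_n\to u$ a.e. Set $D:=\sup_n\|u_n\|$. By \ref{f1} for $\bar f$, for every $\e>0$ there is $t_1(\e)$ with $\bar f(t)\le \e t^{p^*-1}$ for $t\ge t_1$, and there is $C$ with $\bar f(t)\le C(1+t^{p^*-1})$ for all $t\ge0$. It is not clear that $g(\la_n,x,u_n)$ converges pointwise, since $g$ is only Carathéodory in $(x,s)$ and no continuity in $\la$ is assumed. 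I will therefore first assume that $g$ is continuous in $(\la,s)$, which holds in our application where $g=m(x)f(s)$, and then show $g(\la_n,\cdot,u_n)\to g(\la,\cdot,u)$ a.e.

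The main step is to prove that $g(\la_n,\cdot,u_n)\to g(\la,\cdot,u)$ strongly in $L^{(p^*)'}(\Om)$. Since $W_0^{1,p}\hookrightarrow L^{p^*}$ gives $L^{(p^*)'}\hookrightarrow W^{-1,p'}$, this yields $G(\la_n,u_n)\to G(\la,u)$ in $W^{-1,p'}(\Om)$.

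By Vitali's theorem it suffices to show that $|g(\la_n,x,u_n)|^{(p^*)'}$ is uniformly absolutely continuous. For a measurable $\om\subset\Om$, split $\om$ into $\{|u_n|\ge t_1\}$ and its complement, and note that $(p^*-1)(p^*)'=p^*$.
\begin{itemize}
\item On $\{|u_n|\ge t_1\}$ the integral is at most $\e^{(p^*)'}\int_\Om|u_n|^{p^*}\le \e^{(p^*)'}C D^{p^*}$.
\item On the complement it is at most $\max_{[0,t_1]}\bar f^{(p^*)'}\,|\om|$.
\end{itemize}
Choosing $|\om|$ small gives the uniform absolute continuity.

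The main obstacle is exactly this gain of integrability: a plain dominating function only gives boundedness in $L^{(p^*)'}$. The $o(t^{p^*-1})$ condition is what converts boundedness into equi-integrability, since it makes the large values of $|u_n|$ contribute uniformly little. Continuity of $G$ follows from the same argument applied to strongly convergent sequences.

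If continuity in $\la$ is genuinely absent, I would instead use weak-$L^{(p^*)'}$ compactness together with the equi-integrability above. I would then pass the test functions through $\int g_n\psi$ uniformly for $\psi$ in the unit ball of $W_0^{1,p}$, splitting $\psi$ into a part in $L^r$ with $r<p^*$, on which the compact embedding acts, and a small remainder.
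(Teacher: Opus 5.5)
Your proposal is correct and follows essentially the same route as the paper: you extract $\la_n\to\la_0$ and $u_n\to u$ a.e., bound the $W^{-1,p'}$ distance by the $L^{(p^*)'}$ distance, and apply the Lebesgue--Vitali theorem with the same splitting of $\om$ into $\{|u_n|\ge t_1\}$ (controlled by $\e\int_\Om|u_n|^{p^*}$ via \ref{f1}) and its complement (controlled by a constant times $|\om|$). The paper likewise uses continuity of $g(\cdot,x,\cdot)$ jointly in $(\la,s)$, which is part of the Carath\'eodory hypothesis, so your fallback paragraph for the case without continuity in $\la$ is not needed.
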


\begin{proof}
Let $\{\la_n\}_{n\in\N}$ be a sequence in $I$, and $\{u_n\}_{n\in\N}$ a bounded sequence of $W_0^{1,p}(\Om)$. Then, up to a subsequence, there exist $\lambda_0\in I$ and $u$ such that $\lambda_n\to\lambda_0$, $u_n \wto u$ weakly in $W_0^{1,p}(\Omega)$, strongly in $L^{p^*-1}$ a.e. 

Let us show that  the sequence $\{ G(\lambda_n, u_n)\}_{n\in\N}$ converges in $W^{-1,p'}(\Omega)$. 
For any $\psi\in W_0^{1,p}(\Omega)$,
$$
\left|\int_\Omega \big( g(\la_n,x, u_n)-g(\la_0 ,x, u)\big)\psi\, dx\right|\leq \|\psi\|_{p^*}\,\big\|g(\lambda_n, \cdot, u_n)-g(\lambda_0, \cdot, u)\big\|_{(p^*)'}
$$
Let $z_n:=|g(\lambda_n, \cdot,u_n)-g(\lambda_0, \cdot,u)|^{(p^*)'}$. In order to prove that $\|z_n\|_1\to 0$ we will made use of the {\it Lebesgue-Vitali convergence theorem}. First,  since $g(\cdot, x, \cdot)$ is continuous, the sequence $\{z_n\}$  converges 0 a.e. and therefore, it converges also to 0 in measure. Secondly, we claim that $\{z_n\}_n$ is   uniformly  absolutely continuous, that is, 
\begin{equation}\label{equii}
\forall \e>0 \; \exists \delta>0 \hbox{ s.t. } 
 \int_{\omega} z_n\, dx\leq \e, \quad \forall n\in \N, \quad  \forall \omega\subset \Omega, \ |\omega|<\delta.
\end{equation}
Trivially it is enough to prove \eqref{equii} for   all $n\in\N$ large enough.
\medskip
We write 
$$\int_\omega z_n\, dx \leq C\left( \int_\omega |g(\lambda_n, x, u_n)|^{(p^*)'}\, dx +\int_\omega |g(\lambda_0, x, u)|^{(p^*)'}\, dx\right)$$
for some $C>0$ independent of $n$.   By  \ref{f1} there exists  $t_1>0$ such that 
\begin{equation}\label{tt}
|g(\lambda, x, t)|\leq \e ^{1/(p^*)'}|t|^{p^*-1}\quad a.e.\ x\in\Omega,\; \forall \lambda\in I\subset\R, \, \forall |t|\geq t_1.\end{equation}
Let $\omega \subset \Omega $ be a measurable set. We write 
\begin{align*}
 \int_\omega |g(\lambda_n, x, u_n )|^{(p^*)'}\,dx 
 &= \int_{\{u_n>t_1\}\cap \omega } |g(\lambda_n, x,  u_n )|^{(p^*)'}\,dx \nonumber\\
 &\quad +\int_{\{u_n \leq t_1\}\cap\omega}|g(\lambda_n, x,  u_n )|^{(p^*)'}\,dx 
\nonumber\\& 
=: I_1+I_2.
\end{align*}
Using \ref{f1} again, there exists $C_1>0$ such that 
$$|g(\lambda, x, t)|\leq C_1(1+|t|^{p^*-1})\quad a.e.\ x\in\Omega,\; \forall \lambda\in I, \, \forall t\in \R$$
then, denoting still by $C_1$ a positive constant independent of $n$ and $\e $
$$I_2=\int_{\{u_n \leq t_1\}}|g(\lambda_n, x, u_n )|^{(p^*)'} \chi_{\omega}dx 
\leq C_1 (1+ |t_1|^{p^*-1}  )^{(p^*)'}  |\om| . $$
Thus, if $|\omega| \leq  \frac{\e }{C_1 (1+ |t_1|^{p^* -1}  )^{(p^*)'} }$ then 
$I_2\leq \e.$
Next, using  \eqref{tt} 
$$I_1\leq \e \int_\omega |u_n|^{p^*}dx \leq D\e $$
where $D=\sup_n \|u_n\|_{p^*}^{p^*}$. 
Similarly  we can prove $$\int_\omega |g(\lambda_0, x, u)|^{(p^*)'}\, dx\leq C \e$$
for some $C>0$ and the claim \eqref{equii} is proved.
\end{proof}

Now we can prove the following bifurcation theorem :
\begin{thm}\label{pro:bif drabek}
Let $g$ satisfies the growth condition \eqref{crois g} with $\bar{f}(0)=0$, 
$\bar{f}$   continuous, 
slightly subcritical at infinity,  see  {\rm \ref{f1}}, satisfying 
also
\begin{equation*}
 \lim_{s\to 0}\frac{\bar{f}(s)}{s^{p-1}}=0.  
\end{equation*}
Then there exist  a maximal closed connected  set ${\mathscr C}$ of ${\mathscr S}$ containing $(\lambda_1,0)$  such that  either
\begin{enumerate}
\item[\rm (i)] ${\mathscr C}$ is unbounded on $E$, or else
\item[\rm (ii)] ${\mathscr C}$ contains  a point $(\tilde{\lambda},0)$ where $\tilde{\la}$ is an eigenvalue of $(-\Delta_p, W_0^{1,p}(\Om))$, with $\tilde{\lambda}\not=\lambda_1$.
\end{enumerate}
\end{thm}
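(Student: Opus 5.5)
The plan is to reduce Theorem \ref{pro:bif drabek} to an abstract Rabinowitz-type global bifurcation result for homogeneous-plus-compact operators. This is the framework of Drábek's theorem and of \cite[Theorem 3.2]{Cuesta_Pardo_MN_2024}, and compactness of the perturbation is now supplied by Proposition \ref{pro:compG} instead of Orlicz space arguments. Let ${\mathscr S}$ denote the closure of the set of nontrivial solutions $(\la,u)\in E$ of $N(u)-\la S(u)-G(\la,u)=0$.

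First I rewrite the equation as a fixed point problem. The operator $N$ is a homeomorphism of $W_0^{1,p}(\Om)$ onto $W^{-1,p'}(\Om)$. Since it is strictly monotone, coercive and continuous, its inverse $N^{-1}$ is continuous and $(p-1)^{-1}$-homogeneous. The problem is therefore equivalent to
$$u=N^{-1}\big(\la S(u)+G(\la,u)\big)=:T(\la,u).$$
The map $u\mapsto N^{-1}(\la S(u))$ is compact because $W_0^{1,p}\hookrightarrow L^p$ is compact, and $(\la,u)\mapsto N^{-1}(G(\la,u))$ is compact on $I\times W_0^{1,p}(\Om)$ for every bounded interval $I$ by Proposition \ref{pro:compG}. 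Hence $T$ is a compact perturbation of the identity on bounded sets of $E$, and the Leray--Schauder degree of $I-T(\la,\cdot)$ on balls is well defined.

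Next I show that $G$ is of higher order at $u=0$. That is, $\|G(\la,u)\|_{W^{-1,p'}}/\|u\|^{p-1}\to0$ as $u\to0$, uniformly for $\la$ in bounded sets. For $\e>0$, the hypothesis $\bar f(s)/s^{p-1}\to0$ at $0$ together with \ref{f1} gives $\bar f(s)\le \e s^{p-1}+C_\e s^{p^*-1}$. Using Hölder's inequality and Sobolev embeddings I get $\|G(\la,u)\|\le \e\,C\|u\|^{p-1}+C_\e\|u\|^{p^*-1}$, and $p^*>p$ gives the claim. Consequently, on small spheres $\|u\|=r$ the map $I-T(\la,\cdot)$ is homotopic to $I-N^{-1}(\la S(\cdot))$ whenever $\la$ is not an eigenvalue. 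The homotopy is $u-N^{-1}(\la S(u)+tG(\la,u))$, and it has no zeros on $\|u\|=r$ for $r$ small by a standard rescaling contradiction argument. Such an argument normalizes $v_n=u_n/\|u_n\|$ and uses the higher-order estimate together with the $S^+$ property of Proposition \ref{S^+} to produce an eigenfunction with eigenvalue $\la$.

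The key degree input is the known jump of the index of $I-N^{-1}(\la S)$ across $\la_1$. For $\la$ slightly less than $\la_1$ the index is $1$, and for $\la$ slightly greater than $\la_1$ it is $-1$; this is Anane's simplicity and isolation of $\la_1$ \cite{Anane} combined with the degree computation of Drábek/del Pino–Manásevich \cite{Drabek, DelPino-Manasevich}. I quote this directly. With the jump in hand, the Rabinowitz argument runs verbatim. Suppose the maximal connected component ${\mathscr C}$ containing $(\la_1,0)$ is bounded and meets the trivial line only at points $(\mu,0)$ with $\mu=\la_1$. Separating ${\mathscr C}$ from the rest of ${\mathscr S}$ by a bounded open neighbourhood $\mathcal O$ in $E$, via Whyburn's lemma, gives a contradiction: homotopy invariance of the degree on the $\la$-slices of $\mathcal O$ forces the indices at $\la_1^-$ and $\la_1^+$ to coincide. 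Here only finitely many bifurcation points can be enclosed, by compactness and isolation of eigenvalue points reached. Hence either (i) or (ii) holds.

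I expect the main obstacle to be verifying that the degree and homotopy arguments work on a merely bounded $\la$-interval, since Proposition \ref{pro:compG} gives compactness only there. This is handled by working inside the bounded set $\mathcal O$, whose $\la$-projection is bounded. The lack of Rellich compactness for $\bar f$, which is the genuine novelty, is entirely absorbed by Proposition \ref{pro:compG}.
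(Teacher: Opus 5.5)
Your proposal is correct and follows essentially the paper's route: the paper also transplants the Dr\'abek/del Pino--Man\'asevich form of Rabinowitz's global bifurcation argument (through \cite[Theorem 3.2]{Cuesta_Pardo_MN_2024}), with the compactness of $G$ on bounded $\la$-sets supplied by Proposition \ref{pro:compG} instead of Orlicz-space arguments, exactly as you do. One small correction: your remark that only finitely many bifurcation points can be enclosed ``by isolation of eigenvalue points'' is unnecessary, and for $p\neq 2$ it is not justified, since isolation of higher eigenvalues of $-\De_p$ is not available in general. Under the contradiction hypothesis $\mathscr C\cap(\R\times\{0\})=\{(\la_1,0)\}$, you only need the isolation of $\la_1$ from \cite{Anane}.
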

\begin{proof}
The proof is similar to the one of \cite[Theorem 3.2]{Cuesta_Pardo_MN_2024}, where 
we adapt the proofs of  \cite[Theorem 14.18 and Theorem 14.20]{Drabek},  and of \cite[Proposition 3.5]{Girg_Takac}, based on the well known result of Rabinowitz \cite[Theorem 1.3]{R71} to the case of a nonlinearity $g$ satisfying \eqref{crois g}.
Notice  that   the operator $G$ is compact according to Proposition \ref{pro:compG} with monotonicity only on compact sets. 

Observe that, once the  growth condition \eqref{crois g} is satisfied, we  can always redefine  an upper bound of $|g(\la ,x, s)|$, precisely as the $\max_{[0,s]} \bar{f}$, which is a non-decreasing function.
Observe also that \eqref{eq:usfF} in Theorem \ref{usfF}, implies that
$\ \lim_{s\to+\infty}\frac{sf(s)}{F(s)}=q+1>1$ for $f=\bar{f}.$
\end{proof}
\end{appendix}

\section{Acknowledgments} 

This work was done during a series of  Pardo's  visits to the Universit\'e du Littoral C\^ote d'Opale ULCO, and of   Cuesta's  visits to the Universidad Complutense de Madrid UCM, whose invitations and hospitality they thanks. The second author is supported by grants  PID2022-137074NB-I00,  MICINN,  Spain, and by UCM Spain,  Grupo 920894.

\bibliographystyle{abbrv}
\bibliography{ref}
\end{document}